\def\url@leostyle{%
  \@ifundefined{selectfont}{\def\UrlFont{\sf}}{\def\UrlFont{\small\ttfamily}}}
\DeclareMathOperator{\Ker}{Ker}
\newtheorem{thm}{Theorem}[section]
\newtheorem{prop}[thm]{Proposition}
\newtheorem{lemma}[thm]{Lemma}
\newtheorem{cor}[thm]{Corollary}
\newtheorem{conj}[thm]{Conjecture}
\newtheorem{quest}[thm]{Question}
\theoremstyle{definition}
\newtheorem{defn}[thm]{Definition}
\newtheorem{rem}[thm]{Remark}
\newtheorem{example}[thm]{Example}
\newcommand{\Hom}{\text{Hom}}
\newcommand{\Frac}{\text{Frac}}
\newcommand{\gr}{\text{gr}}
\newcommand{\rad}{\text{rad}}
\newcommand{\ldot}{\textbf{.}}
\newcommand{\triv}{\mathrm{triv}}
\newcommand{\rank}{\text{rank}}
\newcommand{\reg}{\text{reg}}
\newcommand{\loc}{\text{loc}}
\newcommand{\sgn}{\text{sgn}}
\newcommand{\mf}{\mathfrak}
\newcommand{\Z}{\mathbb{Z}}
\newcommand{\F}{\mathbb{F}}
\newcommand{\h}{\mf{h}}
\begin{document}

%\begin{frontmatter}

\title{Representations of Rational Cherednik Algebras in Positive Characteristic}
\author{Martina Balagovi\' c, Harrison Chen}
%\cortext[cor1]{corresponding author}
\address{
M.B: Department of Mathematics, University of York, York, YO10 5DD, UK, and \\
Department of Mathematics, University of Zagreb, Bijeni\v{c}ka 30, 10000 Zagreb, Croatia\\
H.C: Department of Mathematics, University of California, 852 Evans Hall, Berkeley, CA 94720 USA
 %Department of Mathematics,  Massachusetts Institute of Technology, Cambridge, MA 02139, USA 
}
\email{martina.balagovic@york.ac.uk, chenhi@math.berkeley.edu}

\begin{abstract}
 We study rational Cherednik algebras over an algebraically closed field of positive characteristic. 
We first prove several general results about category $\mathcal{O}$, and then  focus on rational Cherednik algebras associated to the general and special linear group over a finite field of the same characteristic as the underlying algebraically closed field. For such algebras we calculate the characters of irreducible representations with trivial lowest weight.
\end{abstract}

%\begin{keyword} rational Cherednik algebra \sep field of finite characteristic \sep irreducible representation \sep character

%\MSC 17B10 \sep 16W99 \sep 14G17

%\end{keyword}

%\end{frontmatter}

\maketitle

\section{Introduction}

Given an algebraically closed field $\Bbbk$, a finite-dimensional $\Bbbk$-vector space $\h$, a finite group $G\subseteq GL(\h)$ generated by reflections, a constant parameter $t\in \Bbbk$ and a collection of constants $c_{s} \in \Bbbk$ labeled by  conjugacy classes of reflections $s$ in $G$, the rational Cherednik algebra $H_{t,c}(G,\h)$ is a certain non-commutative infinite-dimensional associative algebra over $\Bbbk$, which deforms the semidirect product of the group algebra $\Bbbk G$ and the symmetric algebra on $\h$ and $\h^*$, $S(\h^{*} \oplus \h)$. Rational Cherednik algebras have been extensively studied since the early 1990s, and most efforts have focused on the case when the underlying field $\Bbbk$ is the field of complex numbers. This paper is one of the first attempts to study their representation theory in the case where $\Bbbk$ is an algebraically closed field of finite characteristic $p$. 

The parameter $t$ can be rescaled by a nonzero constant, producing two families of algebras with different types of behavior: one for  $t=0$ and one for $t\ne 0$, the latter being equivalent to $t=1$.

In characteristic zero, one commonly defines a category of $H_{t,c}(G,\h)$-representations called category $\mathcal{O}$, analogous to category $\mathcal{O}$ in Lie theory. It is generated (under taking subquotients and extensions) by standard or Verma modules $M_{t,c}(\tau)$, which are parametrized by irreducible representations $\tau$ of the finite group $G$. Verma modules admit a contravariant form $B$, such that the kernel of the form is the unique maximal proper submodule $J_{t,c}(\tau)$ of $M_{t,c}(\tau)$. The quotients $L_{t,c}(\tau) = M_{t,c}(\tau)/\Ker B$ comprise all irreducible modules in $\mathcal{O}$.

In positive characteristic we define category $\mathcal{O}=\mathcal{O}_{t,c}$ in a way that allows us to formulate and prove analogues of the properties and results in characteristic zero. One significant difference is that, while in characteristic zero and for generic choice of parameters Verma modules $M_{t,c}(\tau)$ are irreducible, this never happens in positive characteristic. The reason is that the algebra $H_{t,c}(G,\h)$ has a large center, so the module $M_{t,c}(\tau)$ always has a large submodule. To account for this, we define \emph{baby Verma modules} $N_{t,c}(\tau)$, which are quotients of Verma modules by the action of a certain large central subspace of $H_{t,c}(G,\h)$. Baby Verma modules are finite dimensional; consequently, all the irreducible quotients $L_{t,c}(\tau)$ are finite-dimensional, and we define category $\mathcal{O}$ to be the category of finite dimensional graded modules. It contains all the baby Verma modules, but not the Verma modules. This is analogous to the situation in the representation theory of Lie algebras in positive characteristic and to the study of the rational Cherednik algebras  $H_{0,c}(G,\h)$ in characteristic zero.

We prove the standard theorems about category $\mathcal{O}$. Namely, we show that any irreducible object, up to grading shifts, is isomorphic to some $L_{t,c}(\tau)$, and that the baby Verma modules admit a contravariant form with the usual properties. We define characters and show that for $t\ne 0$ and generic $c$, the characters of $L_{t,c}(\tau)$ are of a specific form, depending on the structure of a certain \emph{reduced module} $R_{t,c}(\tau)$. We calculate an upper bound for the dimension of irreducible modules. 

From here, we turn to investigate the characters of $L_{t,c}(\tau)$ for $t=0$ and $t=1$ and all values of $c$ for specific classes of groups $G$. Over a field $\Bbbk$ of characteristic $p$, we study the rational Cherednik algebra associated to the general and special linear group over a finite field, $G=GL_{n}(\F_{q})$ and $G=SL_{n}(\F_{q})$, for $q=p^r$. These groups are reflection groups which have no counterpart in characteristic zero. 

We describe the characters of the irreducible modules $L_{t,c}(\mathrm{triv})$ associated to the trivial representation of $GL_n(\F_q)$, for all $c$ (Theorems \ref{glnt0} and \ref{main}). One remarkable property is that for sufficiently large $n$ and $p$, the character of $L_{1,c}(\mathrm{triv})$ does not depend on the specific choice of $c$.  This phenomenon does not arise in characteristic zero. 

For $n,p,r$ large enough, the structure (specifically, the conjugacy classes) of $GL_n(\F_{q})$ and $SL_n(\F_{q})$ is similar enough to enable us to obtain the characters of the irreducible modules $L_{t,c}(\mathrm{triv})$ for $SL_n(\F_{q})$  as a corollary of the corresponding results for $GL_n(\F_{q})$. We specify the exact conditions on  $n,p,r$ being ``large enough" and calculate the remaining characters, thus obtaining the complete classification of $L_{t,c}(\mathrm{triv})$ for the rational Cherednik algebra associated to $SL_n(\F_{q})$. The main results about $SL_n(\F_{q})$ are Theorems \ref{slnt0} and \ref{sln}.

In the first phase of this work we have gathered information about the structure of irreducible representations through calculations of the contravariant form $B$ in the algebra software MAGMA \cite{magma}. We used this data to formulate conjectures and to settle the cases of small groups, which are often different from the general situation (for $GL_{n}(\F_{p^r})$, ``small" refers to small $n,p,r$).

This is the first paper in a series of two that we are planning on this topic. The second paper will be dedicated to studying, in more detail, category $\mathcal{O}$ for the rational Cherednik algebras associated to the group $GL_2(\F_p)$, and calculating the characters of irreducible representations $L_{t,c}(\tau)$ for all $\tau$.

The roadmap of this paper is as follows. Section 2 contains definitions of rational Cherednik algebras, Dunkl operators, baby Verma modules, category $\mathcal{O}$, and general results analogous to the ones in characteristic zero. In Section 3 we define characters of objects in category $\mathcal{O}$ and discuss characters of irreducible modules for generic $c$. In Section 4 we study characters of irreducible modules associated to the trivial lowest weight for $G=GL_n(\F_q)$, and offer the complete classification for all $t$ and $c$.  In Section 5 we solve the same problem for 
$G=SL_n(\F_q)$. The main results are Theorems \ref{glnt0}, \ref{main}, \ref{slnt0} and \ref{sln}.
Appendix \ref{secdata} contains data and conjectures about characters of rational Cherednik algebras associated to orthogonal groups over a finite field, which might be used for further work.

\section{Definitions and basic properties}\label{definitions}

\subsection{Notation}\label{notation}

Let $\Bbbk$ be an algebraically closed field of characteristic $p$, $\F_q$ a finite field of $q=p^r$ elements, $G$ a finite group, $\Bbbk G$ its group algebra, $\h$ a faithful $n$-dimensional representation of $G$ over $\Bbbk $, and $\h^*$ its dual representation. We may regard $G$ as a subgroup of $GL(\h)$. We will often let $\{x_1, \ldots, x_n\}$ denote a basis of $\h^*$ and $\{y_1, \ldots, y_n\}$ the dual basis of $\h$. Let $(\cdot,\cdot)$ be the canonical pairing $\h \otimes \h^* \rightarrow \Bbbk$ or $\h^* \otimes \h \rightarrow \Bbbk$.  

For any vector space $V$ let $TV$ and $SV$ denote the tensor and symmetric algebra of $V$ over $\Bbbk$, and $S^iV$ the homogeneous subspace of $SV$ of degree $i$.  For a graded vector space $M$, let $M_i$ denote the $i$-th graded piece, and $M[j]$ the same vector space with the grading shifted by $j$, meaning $M[j]_i = M_{i+j}$. For $M=\oplus M_i$ a graded vector space, define its Hilbert series as $$\mathrm{Hilb}_M(z)=\sum_{i} \dim M_i z^i.$$ For a filtered module $F$, let $\gr(F)$ denote the associated graded module. For an associative algebra $A$, $a,b \in A$, $S\subseteq A$, let $[a, b] = ab - ba$ be the usual commutator, and let $\langle S \rangle$ be the ideal generated by the subset $S$.

For $V$ some space of polynomials and $m=p^a$ a power of the characteristic $p$, we define $V^m$ to be the set $\{ f^m | f\in V \}$. If $V\subset \Bbbk [x_1,\ldots x_n]$ is graded and the Hilbert series of $\Bbbk [x_1,\ldots x_n]/V$ is $h(z)$, then the Hilbert series of $\Bbbk [x_1,\ldots x_n]/\left< V^m \right>\cong \Bbbk [x_1,\ldots x_n]\otimes_{\Bbbk [x_1^m,\ldots x_n^m]}\left( \Bbbk [x_1^m,\ldots x_n^m]/\left< V^m \right>\right)$ is
$h(z^m)\left( \frac{1-z^m}{1-z} \right)^n.$

For $\lambda \in \F_q, \lambda\ne 0$, let $d_{\lambda}$ be the following elements of $GL_{n}(\F_{q})$:
$$\textit{for } \lambda\ne 1,\,\,  d_\lambda = \left[ \begin{array}{ccccc} 
\lambda^{-1} & 0 & 0 & \cdots & 0\\
0&1&0 & \cdots &0\\
0&0&1 & \cdots &0\\
\vdots & \vdots & \vdots & \ddots & \vdots\\
0&0&0&\cdots & 1 \end{array}\right],\,\,\,\;
d_1 = \left[\begin{array}{ccccc} 
1 & 1 & 0 & \cdots & 0\\
0&1&0 & \cdots&0\\
0 & 0 & 1 & \cdots & 0\\
\vdots & \vdots & \vdots & \ddots & \vdots \\
0&0&0&\cdots & 1 \end{array}\right].$$

\subsection{Reflection groups}\label{reflections}

\begin{defn}
An element $s \in GL(\h)$ is a \emph{reflection} if the rank on $\h$ of $1 - s$ is 1. A finite subgroup $G \subset GL(\h)$ is a \emph{reflection group} if it is generated by reflections.
\end{defn}

\begin{example}
Let $\mathfrak{h}= \Bbbk ^n$ and $\F_q \subseteq \Bbbk $ a finite subfield. The group $GL_{n}(\F_{q})$ is a finite subgroup of $GL(\h)$, generated by conjugates of $d_{\lambda}$. All $d_{\lambda}$ and their conjugates are reflections, so $GL_{n}(\F_{q})$ is a reflection group.
\end{example}

This is the main example, in the sense that for $\Bbbk$ the algebraic closure of $\F_p$, and for any reflection group $G\subseteq GL(\h)$ one can construct the finite field of coefficients of $G$, which is some finite field of characteristic $p$, and view $G$ as a subgroup of $GL_{n}(\F_{q})$. Let $\h_{\F}=\F_q^n$ be the corresponding $\F_{q}$-form of $\h=\Bbbk^n$ preserved by $G$. 

Reflections in $G$ are elements that are conjugate in $GL(\h)$ to some $d_{\lambda}$. If $G=GL_{n}(\F_{q})$, there are $q$ conjugacy classes of reflections, with representatives $d_{\lambda}$. If $G$ is a proper subgroup of $GL_{n}(\F_{q})$, it is possible for elements of $G$ to be conjugate in $GL(\h)$ but not in $G$, and it is thus possible to have reflections which are not $G$-conjugate to any $d_{\lambda}$, and to have reflections with the same eigenvalues which are not in the same conjugacy class. 

A reflection $s\in G$ is called a \emph{semisimple reflection} if it is semisimple as an element of $GL(\h)$; such elements are conjugate in $GL(\h)$ to some $d_{\lambda}$ with $\lambda\ne 1$. A reflection $s\in G$ is called a \emph{unipotent reflection} if it is unipotent as an element of $GL(\h)$; such an element is conjugate in $GL(\h)$ to $d_{1}$, and has the property that $s^p = 1$. Note that in characteristic zero, a unipotent reflection generates an infinite subgroup, so considering unipotent reflections is unique to working in positive characteristic.

The group $G$ acts on $\mathfrak{h}$, and we define the structure of the dual representation on $\mathfrak{h}^*$ in the standard way: for $x\in \h^*, y\in \h, g\in G$, let $(g\ldot y,x)=(y,g^{-1}\ldot x)$. After choosing bases of $\mathfrak{h}$ and $\mathfrak{h}^*$, we see that $g\in GL_{n}(\F_{q})\subseteq GL(\h)$ acts in the dual representation as a matrix $(g^{-1})^{t}\in GL_{n}(\F_{q})\subseteq GL(\h^*)$. Consequently, $g$ is a reflection on $\h$ with eigenvalue $\lambda^{-1}$ if and only if it is a reflection on $\h^*$ with eigenvalue $\lambda$.

\begin{prop}\label{reflequiv} There exists a bijection between the set of reflections in $GL(\h_{\F})$ and the set of all vectors $\alpha \otimes \alpha^\vee \ne 0$ in $\h_{\F}^* \otimes \h_{\F}$ such that $(\alpha, \alpha^\vee) \ne 1$. The reflection $s$ corresponding to $\alpha \otimes \alpha^\vee$ acts:
\begin{align*}
\text{on $\h^*$ by} \quad & s \ldot x = x - (\alpha^\vee, x) \alpha \\
\text{on $\h$ by} \quad & s \ldot y = y + \frac{(y, \alpha)}{1 - (\alpha, \alpha^\vee)} \alpha^\vee.
\end{align*}

If $(\alpha, \alpha^\vee) \ne 0$, such a reflection $s$ is semisimple, acting on $\h^*$ with eigenvalue $1$ of multiplicity $n-1$ and eigenvalue $\lambda=1 - (\alpha^\vee, \alpha)\ne 1$ of multiplicity $1$. If $(\alpha, \alpha^\vee) = 0$, the reflection is unipotent, acting on $\h^*$ with eigenvalue $1$ of multiplicity $n-1$ and one Jordan block of size $2$. \end{prop}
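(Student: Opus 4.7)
The plan is to extract the tensor $\alpha \otimes \alpha^\vee$ directly from the rank-one condition defining a reflection, then use the duality $(s \ldot y, x) = (y, s^{-1} \ldot x)$ to convert one formula into the other, and finally read off the eigenvalue data from the formula on $\h^*$.

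For the first direction, given a reflection $s$, I would observe that $1 - s$ on $\h^*$ also has rank one (by transposition), so its image is a line $\Bbbk \alpha$ for some nonzero $\alpha \in \h^*$, and the linear functional $x \mapsto (\text{coefficient of } \alpha \text{ in } (1-s)(x))$ is an element of $\h^{**} = \h$ which I call $\alpha^\vee$. By construction $s \ldot x = x - (\alpha^\vee, x) \alpha$ with $\alpha^\vee \ne 0$. Although $\alpha$ and $\alpha^\vee$ are individually defined only up to inverse nonzero scalars, the decomposable tensor $\alpha \otimes \alpha^\vee \in \h_\F^* \otimes \h_\F$ is canonical. To recover the action on $\h$, I would first compute $s^{-1}$: the ansatz $s^{-1} \ldot x = x + \mu (\alpha^\vee, x) \alpha$ combined with $s \circ s^{-1} = 1$ forces $\mu = (1 - (\alpha, \alpha^\vee))^{-1}$, which is precisely where the hypothesis $(\alpha, \alpha^\vee) \ne 1$ enters --- equivalently, where invertibility of $s$ is needed. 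Dualizing through $(s \ldot y, x) = (y, s^{-1} \ldot x)$ then produces the claimed formula for $s \ldot y$.

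Conversely, any nonzero decomposable tensor with $(\alpha, \alpha^\vee) \ne 1$ defines, via these two formulas, an element of $GL(\h_\F)$ whose difference from the identity has rank one, and the two assignments are mutually inverse, so the bijection follows. For the eigenvalue analysis, the formula on $\h^*$ fixes the hyperplane $\{x : (\alpha^\vee, x) = 0\}$ pointwise (giving eigenvalue $1$ with multiplicity $n-1$) and sends $\alpha \mapsto (1 - (\alpha, \alpha^\vee)) \alpha$. When $(\alpha, \alpha^\vee) \ne 0$, the vector $\alpha$ lies outside this hyperplane and supplies a complementary eigenvector with eigenvalue $\lambda = 1 - (\alpha, \alpha^\vee) \ne 1$, making $s$ semisimple; when $(\alpha, \alpha^\vee) = 0$, the vector $\alpha$ lies inside the hyperplane, so $\alpha$ is in both the kernel and the image of $s - 1$, forcing $(s-1)^2 = 0$ with image of rank one, which yields a single $2 \times 2$ unipotent Jordan block. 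The whole argument is elementary linear algebra and I do not expect any genuine obstacle; the only thing to watch is keeping the duality conventions consistent so that the signs, the transpose, and the symmetric use of $(\cdot, \cdot)$ for both orderings cohere.
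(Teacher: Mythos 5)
Your proposal is correct and follows essentially the same route as the paper: represent the rank-one operator $1-s$ on $\h^*$ as a decomposable tensor, deduce $(\alpha,\alpha^\vee)\ne 1$ from invertibility, obtain the formula on $\h$ by duality, and read off the eigenstructure from whether $\alpha$ lies in the fixed hyperplane. The only cosmetic differences are that the paper gets $(\alpha^\vee,\alpha)\ne 1$ from $s\ldot\alpha\ne 0$ rather than from an explicit formula for $s^{-1}$, and handles the unipotent case by computing $s^i\ldot x = x - i(\alpha^\vee,x)\alpha$ (hence $s^p=1$) where you argue via $(s-1)^2=0$; both are equally valid.
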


\begin{proof}

For any reflection $s$, the operator $1-s$ on $\h_{\F}^*$ has rank one, and can thus be represented as $\alpha \otimes \alpha^\vee \in \h_{\F}^* \otimes \h_{\F}$, so that for all $x\in \h^*$, $$(1-s)\ldot x=(\alpha^\vee, x) \alpha.$$ Since $s$ is invertible, $0\ne s\ldot \alpha= (1-(\alpha^{\vee}, \alpha)) \alpha $, so $(\alpha^{\vee}, \alpha)\ne 1$ and $1-(\alpha^{\vee}, \alpha)$ is the eigenvalue of $s$ on $\h^*$ that is not $1$. The formula for the action of $s$ on $\h$ in terms of $\alpha \otimes \alpha^\vee $ follows from the one for the action of $s$ on $\h^*$ and the definition of dual representation. 

Conversely, given any  $\alpha \otimes \alpha^\vee \in \h_{\F}^* \otimes \h_{\F}$, the above formulas define dual reflections on $\h_{\F}$ and $\h_{\F}^*$.

Any vector $x$ in the kernel of  $\alpha^{\vee}$ is an eigenvector of $s$ with eigenvalue $1$. If $(\alpha, \alpha^\vee) \ne 0$, then $\alpha$ is an eigenvector with eigenvalue $1 - (\alpha^\vee, \alpha)\ne 1$, and $s$ is semisimple. If $(\alpha, \alpha^\vee) = 0$, then $s^i \ldot x = x - i(\alpha^\vee, x)\alpha$, so $s^p = 1$ and $s$ is unipotent.
\end{proof}

Let $S\subseteq G$ be the set of all reflections in $G$. Let $\alpha_{s}$ and $\alpha_{s}^{\vee}$ denote a choice of vectors from the proposition which correspond to a reflection $s$. Let $\lambda_{s}=1-(\alpha^\vee_s, \alpha_s)$; then $s$ has an eigenvalue $\lambda_s$ on $\h^*$ and $\lambda_s^{-1}$ on $\h$. In $GL(\mathfrak{h_{\F}})$, $s$ is conjugate to the $d_{\lambda_{s}^{-1}}$ from Subsection \ref{notation}.

%We will restrict our study to reflection groups, but all the results regarding Cherednik algebras of reflection groups can easily be extended to all finite groups, since the behavior of the Cherednik algebra depends only on the reflections in the group.

\subsection{Rational Cherednik algebras}\label{cherednik}

Let $t \in \Bbbk $.  Let $c: S \rightarrow \Bbbk $ be a conjugation invariant function on the set of reflections, which we write as $s\mapsto c_s$.  
\begin{defn}
The \emph{rational Cherednik algebra} $H_{t,c}(G, \h)$ is the quotient of the semidirect product $\Bbbk G \ltimes T(\h \oplus \h^*)$ by the ideal generated by relations:
$$[x, x'] = 0, \,\, [y, y'] = 0, \,\, [y, x] = (y, x)t - \sum_{s \in S} c_s ((1 - s)\ldot x, y) s,$$
for all $x,x'\in \h^*,y,y'\in \h.$
\end{defn}

Note that for $g \in G$ and $y \in \h$, we use notation  $gy$ for multiplication in the algebra, and  $g \ldot y$ for the action from the representation; they are related by $gyg^{-1} = g \ldot y$.

The parameters $t$ and $c$ can be simultaneously rescaled, in the sense that $H_{t,c}(G, \h)\cong H_{at,ac}(G, \h)$ for any $a\in \Bbbk ^{\times}$ (the isomorphism sends generators $x \in \h^*$ to $tx$ and fixes $\h$ and $G$). This implies that it is enough to study two cases with respect to $t$, $t=0$ and $t=1$. We are mostly interested in the $t=1$ case.

It is clear from the definition that any element of the algebra $H_{t,c}(G,\h)$ can be written as a linear combination of $g x_1^{a_1} \cdots x_n^{a_n}y_1^{b_1}\cdots y_n^{b_n}$ for some  $g \in G$ and some integers $ a_i,b_i\ge 0$. The common question asked for algebras defined in such a way by generators and relations is whether this set is also linearly independent. The answer in the case of Cherednik algebras is positive, as proven in \cite{griffeth}, Theorem 2.1.

\begin{thm}[PBW Theorem for Cherednik algebras]\label{pbw}
The set $$\{g x_1^{a_1} \cdots x_n^{a_n}y_1^{b_1}\cdots y_n^{b_n} \mid g \in G, a_i,b_i \geq 0\}$$ is a basis for $H_{t,c}$.
\end{thm}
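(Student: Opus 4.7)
My plan follows the standard two-step PBW argument: the defining relations imply the proposed monomials span $H_{t,c}$, and linear independence follows from a faithful polynomial (Dunkl) representation.

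For spanning, I take an arbitrary word in $G \cup \h^* \cup \h$. Using $gw = (g \ldot w)g$ for $w \in \h^* \oplus \h$ I push group elements to the left; using $[y,x] = t(y,x) - \sum_s c_s((1-s)\ldot x, y)\,s$ I push each $y$ past each $x$, at the cost of pure-group terms that I then move left again. Each such swap strictly reduces the count of ``$yx$'' inversions in the word, so the process terminates; the commutativity relations $[x,x']=0$ and $[y,y']=0$ then order the remaining $x$'s and $y$'s within themselves into the claimed normal form.

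For linear independence I construct the polynomial representation on $S(\h^*)$ with $G$ acting naturally, each $x \in \h^*$ by multiplication, and each $y \in \h$ by the Dunkl operator
$$D_y \cdot f \;=\; t\,\partial_y f \;-\; \sum_{s \in S} c_s\, (\alpha_s, y)\, \frac{f - s\ldot f}{\alpha_s},$$
which is well-defined because $(1-s)f$ vanishes on the fixed hyperplane $\alpha_s = 0$ of $s$ on $\h$ for both semisimple and unipotent reflections (Proposition \ref{reflequiv}). I then verify the defining relations: $[x,x']=0$ is immediate; the cross-relation $[D_y, x] = t(y,x) - \sum_s c_s((1-s)\ldot x, y)\,s$ is a Leibniz-style computation using $(1-s)\ldot x = (\alpha_s^\vee, x)\alpha_s$ from Proposition \ref{reflequiv}; $G$-equivariance follows from the conjugation invariance of $c$; and $[D_y, D_{y'}]=0$ is the classical Dunkl commutativity. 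Promoting this to a representation on $\Bbbk G \otimes S(\h^*)$ (which separates the group factor) and observing that $D_y$ strictly lowers polynomial degree, I run a leading-term analysis: applying a hypothetical relation $\sum c_{g,a,b}\, g x^a y^b = 0$ to monomials $x^{a'}$ of large degree and reading off top-degree terms forces every $c_{g,a,b}$ to vanish.

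The central technical obstacle is the identity $[D_y, D_{y'}] = 0$. Expanding the commutator, the single-reflection terms cancel against the Leibniz rule applied to the $t\,\partial$ piece, and the double-reflection terms involving distinct products $ss'$ pair antisymmetrically and vanish after summation using the $G$-invariance of $c$ (under $s \mapsto g s g^{-1}$ the coefficients $c_s$ are preserved, which is exactly what is needed to match the two orderings of the inner sum). Crucially no division by integers enters this verification — only the formal divisibility $(1-s)f/\alpha_s \in S(\h^*)$ is used — which is the reason the argument carries over verbatim to positive characteristic, establishing the theorem as in \cite{griffeth}.
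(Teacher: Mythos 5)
The paper does not actually prove Theorem \ref{pbw}: it quotes the result from \cite{griffeth} (Theorem 2.1 there). Your proposal reconstructs the standard proof --- spanning by straightening words via the defining relations, and linear independence via a faithful polynomial/Dunkl representation --- which is the same architecture as the cited argument, so the overall route is the right one. Your spanning step and your verification of the cross-relation $[D_y,x]=t(y,x)-\sum_s c_s((1-s)\ldot x,y)s$ are fine, as is the observation that $(1-s)\ldot f$ is divisible by $\alpha_s$ for unipotent as well as semisimple reflections.

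Two steps, however, are asserted rather than proved, and the first is the actual content of the theorem. The commutativity $[D_y,D_{y'}]=0$ does not follow from the double-reflection terms ``pairing antisymmetrically'': the two orderings of a pair $(s,s')$ produce coefficients involving $s\ldot\alpha_{s'}$ rather than $\alpha_{s'}$, and the cancellation is a genuinely nontrivial identity (one must group reflections whose product is a fixed group element, or whose mirrors contain a fixed codimension-two subspace). A cleaner and characteristic-free route is: having verified the cross-relation and $G$-equivariance as operator identities, apply the Jacobi identity to get $[[D_y,D_{y'}],x]=-\sum_s c_s(\alpha_s^\vee,x)\bigl((\alpha_s,y')D_{(1-s)\ldot y}-(\alpha_s,y)D_{(1-s)\ldot y'}\bigr)s$, which vanishes because $(1-s)\ldot y=-\tfrac{(y,\alpha_s)}{1-(\alpha_s,\alpha_s^\vee)}\alpha_s^\vee$; hence $[D_y,D_{y'}]$ commutes with multiplication by all of $S\h^*$ and equals multiplication by $[D_y,D_{y'}](1)=0$. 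Second, your leading-term analysis does not separate coefficients as stated: both the $t\partial_y$ part and the reflection part of $D_y$ lower degree by exactly one, so all terms $g x^a D^b x^{a'}$ with the same value of $|a|-|b|$ land in the same degree (and at $t=0$ there is no differential symbol to lead with). The standard repair is to act on $\Bbbk G\otimes S\h^*$ induced from the regular representation, take $m$ minimal with some $c_{g,a,b}\ne0$ and $|b|=m$, and evaluate on elements of degree exactly $m$: all higher $b$'s annihilate, the output lies in $\bigoplus_k S\h^*\otimes\Bbbk k$ and its components $x^a\otimes gh'$ separate $g$ and $a$, after which one inducts on $m$. With these two points filled in, the argument is complete.
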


We will need a localization lemma, which is proved as in characteristic zero (see \cite{etingof-ma}). Let $\h_\reg$ be the subspace of $\h$ consisting of elements that are not contained in any reflection hyperplane, meaning they are not fixed by any $s\in S$. Let $\mathcal{D}(\h_\reg)$ be the algebra of differential operators on $\h_{\reg}.$ Let $H_{t,c}^\loc(G,\h)$ be the localization of $H_{t,c}(G,\h)$ as $\Bbbk[\h]=S\h^*$-module away from the reflection hyperplanes. Define the map 
$\Theta_{t,c}: H_{t,c}(G,\h) \rightarrow \Bbbk G \ltimes \mathcal{D}(\h_\reg)$ by $\Theta_{t,c}(x)=x$, $\Theta_{t,c}(y)=t \partial_y - \sum_{s \in S} c_s \frac{(\alpha_s, y)}{\alpha_s} (1 - s)$, $\Theta_{t,c}(g)=g$ for $x\in \h^*, y\in \h, g\in G$.

\begin{lemma}[Localization lemma]\label{loclemma}
The induced map of  localizations $$\Theta_{t,c}: H_{t,c}^\loc \rightarrow \Bbbk G \ltimes \mathcal{D}(\h_\reg)$$ is an isomorphism of algebras.
\end{lemma}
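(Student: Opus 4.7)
The plan is to mirror the characteristic zero proof of Etingof--Ma \cite{etingof-ma}: first show $\Theta_{t,c}$ is a well-defined algebra homomorphism $H_{t,c}(G,\h) \to \Bbbk G \ltimes \mathcal{D}(\h_\reg)$, then extend to the localization, and finally prove bijectivity.

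For the first step, I would check the three families of defining relations. The relations involving $G$ and $[x,x']=0$ are immediate. For $[y,x]$, the computation reduces to $[\partial_y, x] = (y,x)$ together with $[(1-s), x] = (\alpha_s^\vee, x)\alpha_s s$, the latter obtained from the formula $s\ldot x = x - (\alpha_s^\vee, x)\alpha_s$ of Proposition~\ref{reflequiv}; combining these gives
$$[\Theta_{t,c}(y), x] = t(y,x) - \sum_{s\in S} c_s(\alpha_s, y)(\alpha_s^\vee, x)\, s,$$
which matches the Cherednik relation after rewriting $((1-s)\ldot x, y) = (\alpha_s^\vee, x)(\alpha_s, y)$. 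The key verification is $[\Theta(y), \Theta(y')] = 0$: expanding the bracket, the derivative--derivative piece vanishes; the mixed derivative--group pieces produce terms coming from $\partial_y\bigl((\alpha_s, y')/\alpha_s\bigr)$ and their symmetric counterparts, while the group--group piece produces sums over pairs of reflections. The cancellation of these pieces is the classical Dunkl commutativity, which is a formal identity in rational functions twisted by the group algebra, and so carries over from $\Z$ to characteristic $p$ verbatim.

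For the extension to $H_{t,c}^\loc$, observe that each $\alpha_s$ acts as an invertible multiplication operator on $\mathcal{D}(\h_\reg)$, so the elements of the multiplicative set map to units. The Ore condition for the multiplicative system of polynomials nonzero on $\h_\reg$ inside $H_{t,c}$ follows from the PBW basis of Theorem~\ref{pbw} by the standard associated graded argument, since $\gr H_{t,c}$ is a commutative polynomial algebra over $\Bbbk G$. For injectivity, write a given element uniquely via PBW as $\sum_{g, \beta} g\, f_{g,\beta}(x)\, y^\beta$ with $f_{g,\beta}$ rational functions regular on $\h_\reg$; its image is $\sum_{g, \beta} g\, f_{g,\beta}(x)\, D_{g,\beta}$ where the differential operator $D_{g,\beta}$ has principal symbol $t^{|\beta|} \partial^\beta$. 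When $t \ne 0$, reading off leading symbols and using linear independence of $\Bbbk G$ forces all coefficients to vanish. For surjectivity, the identity
$$t\,\partial_y = \Theta_{t,c}(y) + \sum_{s \in S} c_s \frac{(\alpha_s, y)}{\alpha_s}(1-s)$$
puts each $\partial_y$ in the image, and together with $\Bbbk G$ and the invertible polynomial ring $\Bbbk[\h_\reg]$ these generate the full semidirect product.

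The main obstacle will be the verification that $\Theta(y)$ and $\Theta(y')$ commute, which is the heart of the Dunkl construction and requires careful bookkeeping of partial fraction expansions and double sums over pairs of reflections. A secondary subtlety is that the argument for injectivity and surjectivity as sketched needs $t \ne 0$; at $t = 0$ the image of $\Theta_{0,c}$ visibly lies inside $\Bbbk G \ltimes \Bbbk[\h_\reg]$, so in that case the statement should be interpreted as an isomorphism onto the corresponding smaller subalgebra.
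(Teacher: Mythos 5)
The paper offers no proof of this lemma beyond the citation ``proved as in characteristic zero (see \cite{etingof-ma})'', and your sketch follows exactly that route, so strategically you and the paper agree. Your verification of the $[y,x]$ relation is correct as written, and your caveat about $t=0$ is a genuine catch: at $t=0$ every $\Theta_{0,c}(y)$ is a zeroth-order operator, so the map is neither injective nor surjective onto $\Bbbk G \ltimes \mathcal{D}(\h_\reg)$, and the lemma as literally stated must be read as implicitly assuming $t\neq 0$ (the paper only ever invokes it at $t=1$).

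Two steps of your justification do not go through as stated, though. First, the commutativity $[\Theta(y),\Theta(y')]=0$ is not ``a formal identity over $\Z$ that carries over verbatim'': the reflection groups at issue here, such as $GL_n(\F_q)$ acting on $\overline{\F}_p^{\,n}$ with its unipotent reflections ($\lambda_s=1$, $(\alpha_s,\alpha_s^\vee)=0$), have no characteristic-zero lift, so there is no integral identity to reduce mod $p$. A characteristic-free argument is available and is what you should use: the Verma module $M_{t,c}(\triv)\cong S\h^*$ is an $H_{t,c}$-module by construction and $y$ acts on it by the element $\Theta_{t,c}(y)$, so $[\Theta(y),\Theta(y')]$ annihilates $S\h^*$; one then checks this commutator has order $\le 1$ as a $\Bbbk G$-twisted differential operator, and that an order-$\le 1$ element $\sum_g \bigl(a_g+\sum_i b_{g,i}\partial_i\bigr)g$ killing all of $S\h^*$ must vanish, by Dedekind-type linear independence of the distinct automorphisms $g$ of $\Bbbk(\h)$ (here faithfulness of $\h$ is used). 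Second, your injectivity argument via principal symbols tacitly requires $\mathcal{D}(\h_\reg)$ to mean the localization of the abstract Weyl algebra (crystalline differential operators), in which $\partial_i^p\neq 0$ and the associated graded is a polynomial ring; if one instead took $\mathcal{D}(\h_\reg)$ to be the image in $\End_\Bbbk(\Bbbk[\h_\reg])$, then $\partial_i^p=0$ and already at $c=0$ the map would kill the nonzero central element $y_i^p$ of $H_{1,0}^{\loc}$, making the lemma false. The paper does not pin this definition down either, but your proof needs it, so it should be said explicitly.
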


\subsection{Verma Modules $M_{t,c}(\tau)$ and Dunkl operators} \label{vermaintro}

\begin{defn}
Let $\tau$ be an irreducible finite-dimensional representation of $G$. Define a $\Bbbk G \ltimes S\h$-module structure on it by requiring the $\h$-action on $\tau$ to be zero. The \emph{Verma module} is the induced $H_{t,c}(G, \h)$-module  $M_{t,c}(G, \h, \tau) = H_{t,c}(G, \h) \otimes_{\Bbbk G \ltimes S\h} \tau$. We will write $M_{t,c}(\tau)$  instead of $M_{t,c}(G, \h, \tau)$ when it is clear what are $G$ and $\h$.
\end{defn}

As it is an induced module, it satisfies the following universal mapping property:
\begin{lemma}[Frobenius reciprocity] \label{homexist}
Let $M$ be an $H_{t,c}(G, \h)$-module. Let $\tau \subset M$ be a $G$-submodule on which $\h\subseteq H_{t,c}(G, \h)$ acts as zero. Then there is a unique homomorphism $\phi: M_{t,c}(\tau) \rightarrow M$ of $H_{t,c}(G, \h)$-modules such that $\phi|_\tau$ is the identity.
\end{lemma}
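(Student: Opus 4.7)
The plan is to recognize this as the universal property of the induced (tensor) module and reduce the argument to two routine checks.

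First I would note that the hypothesis that $\tau \subset M$ is a $G$-submodule on which $\h$ acts as zero endows $\tau$ with the structure of a $\Bbbk G \ltimes S\h$-submodule of $M$: the $G$-action is the restriction of the $G$-action on $M$, and $S\h$ acts through its augmentation $S\h \twoheadrightarrow \Bbbk$ (since $\h$, and hence every positive-degree element of $S\h$, annihilates $\tau$). Moreover, this $\Bbbk G \ltimes S\h$-action on $\tau$ coincides with the one used in the definition of $M_{t,c}(\tau) = H_{t,c}(G, \h) \otimes_{\Bbbk G \ltimes S\h} \tau$. Let $\iota : \tau \hookrightarrow M$ denote the resulting inclusion of $\Bbbk G \ltimes S\h$-modules.

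For existence, I would define $\phi : M_{t,c}(\tau) \to M$ on elementary tensors by $\phi(h \otimes v) = h \cdot \iota(v)$. This is well-defined on the balanced tensor product because for $a \in \Bbbk G \ltimes S\h$ we have
\[
\phi(ha \otimes v) = (ha)\cdot v = h \cdot (a \cdot v) = \phi(h \otimes av),
\]
where the middle equality uses associativity of the $H_{t,c}$-action on $M$, and the action $a \cdot v$ agrees whether computed in $\tau$ or in $M$ by the previous paragraph. Left multiplication by $H_{t,c}$ makes $\phi$ an $H_{t,c}$-module map, and by construction $\phi|_\tau = \phi(1 \otimes \cdot) = \iota$ is the identity inclusion.

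For uniqueness, I would use the PBW theorem (Theorem \ref{pbw}), which implies that $M_{t,c}(\tau)$ is generated as an $H_{t,c}$-module by $1 \otimes \tau$; equivalently, every element is an $H_{t,c}$-linear combination of vectors in $\tau$. Hence any $H_{t,c}$-module homomorphism out of $M_{t,c}(\tau)$ is determined by its restriction to $\tau$, and two such homomorphisms agreeing on $\tau$ must coincide. There is no real obstacle here beyond carefully checking that the $\Bbbk G \ltimes S\h$-module structure induced on $\tau$ from its embedding in $M$ matches the one used in defining $M_{t,c}(\tau)$; once this is verified, the lemma is simply the universal property of induction along the subalgebra $\Bbbk G \ltimes S\h \subset H_{t,c}(G, \h)$.
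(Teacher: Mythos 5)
Your proposal is correct and is essentially the argument the paper itself intends: the paper offers no separate proof, simply noting that the lemma is the universal mapping property of the induced module $M_{t,c}(\tau) = H_{t,c}(G,\h)\otimes_{\Bbbk G\ltimes S\h}\tau$, which is exactly what you verify. The only superfluous step is invoking the PBW theorem for uniqueness — generation of $M_{t,c}(\tau)$ by $1\otimes\tau$ is already immediate from the definition of the balanced tensor product — but this does no harm.
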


Define a $\mathbb{Z}-$grading on $H_{t,c}(G,\h)$  by letting $x \in \h^*$ have degree 1, $y \in \h$ have degree -1, and $g \in G$ have degree 0. We will denote by the subscript $+$ the positive degree elements of a graded module.

By the PBW Theorem \ref{pbw}, $M_{t,c}(\tau) \cong S\h^* \otimes \tau$ as $\Bbbk$-vector spaces. The grading on $M_{t,c}(\tau)$ by degree of $S\h^*$ is compatible with the grading on $H_{t,c}(G,\h)$ defined above, and we consider it as a graded module.  

Through the identification $M_{t,c}(\tau) \cong S\h^* \otimes \tau$, the action of the generators of $H_{t,c}(G,\h)$ can be explicitly written as follows. Let $f \otimes v \in S\h^* \otimes \tau \cong M_{t,c}(\tau) $, $x \in \h^*$, $y\in \h$ and $g \in G$.  Then $$x \ldot (f \otimes v) = (xf) \otimes v,$$ $$g \ldot (f \otimes v) = g\ldot f \otimes g \ldot v$$ 
$$y \ldot (f\otimes v) = t\partial_y(f) \otimes v - \sum_{s \in S} c_s \frac{(y, \alpha_s)}{\alpha_s} (1 - s) \ldot f \otimes s \ldot v.$$ The operators on $M_{t,c}(\tau)$ corresponding to the action of $y\in \h$, given by $$D_{y}= t\partial_y \otimes 1 - \sum_{s \in S} c_s \frac{(y, \alpha_s)}{\alpha_s} (1 - s) \otimes s,$$ are called \emph{Dunkl operators}.

We say a homogeneous element $v\in M_{t,c}(\tau)$ is \emph{singular} if $D_yv=0$ for all $y\in \h$. Any such element of positive degree generates a proper $H_{t,c}(G,\h)$ submodule. By Lemma \ref{homexist}, this submodule is isomorphic to a quotient of $M_{t,c}((\Bbbk G)v)$.

\begin{rem}The more common definition of the Dunkl operator in characteristic zero is
$$D_y = t\partial_y \otimes 1 - \sum_{s \in S} c_s \frac{2(y, \alpha_s)}{(1 - \lambda_s)\alpha_s} (1 - s) \otimes s.$$ The difference is due to a different convention in normalization of $\alpha_{s}$ and $\alpha_{s}^{\vee}$. As explained in Proposition \ref{reflequiv}, for a given reflection $s$, the vectors $\alpha_{s}$ and $\alpha_{s}^{\vee}$ are determined uniquely up to multiplication by nonzero scalars. We choose their mutual normalization so that $(\alpha, \alpha^{\vee})=1-\lambda_{s}$, while the common normalization in characteristic zero is  $(\alpha, \alpha^{\vee})=2$. That brings about an additional factor of $\frac{2}{1-\lambda_{s}}$ in some formulas. The reason for choosing a non-standard convention is that in characteristic $p$ there are unipotent reflections, for which $\lambda_{s}=1$ and $(\alpha, \alpha^{\vee})=0$. 
\end{rem}

\subsection{Contravariant Form}\label{formdef}
The results from this section can be found in Section 3.11 of \cite{etingof-ma}.

There is an analogue of Shapovalov form on Verma modules. First, for any graded $H_{t,c}(G, \h)$-module $M$ with finite dimensional graded pieces, define its restricted dual $M^\dagger$ to be the graded module whose $i-$th graded piece it the dual of the $i-$th graded piece of $M$. It is a left module for the opposite algebra $H_{t,c}(G, \h)^{opp}$ of $H_{t,c}(G, \h)$. Let $\bar{c}:S\to \Bbbk$ be the function $\bar{c}(s) = c(s^{-1})$. There is a natural isomorphism $H_{t,c}(G, \h)^{opp} \rightarrow H_{t,\bar{c}}(G, \h^*)$ that is the identity on $\h$ and $\h^*$, and sends $g \mapsto g^{-1}$ for $g \in G$, making $M^\dagger$ a $H_{t,\bar{c}}(G, \h^*)$-module.

\begin{defn}
Let $\tau$ be an an irreducible finite-dimensional representation of $G$.  By Lemma \ref{homexist}, there is a unique homomorphism $\phi: M_{t,c}(G, \h, \tau) \rightarrow M_{t,\bar{c}}(G, \h^*, \tau^*)^\dagger$ which is the identity in the lowest graded piece $\tau$.  By adjointness, it is equivalent to the  \emph{contravariant form} pairing
$$B: M_{t,c}(G, \h, \tau) \times M_{t,\bar{c}}(G, \h^*, \tau^*) \rightarrow \Bbbk.$$
\end{defn}

\begin{prop}\label{bform}
The contravariant form $B$ satisfies the following properties. \begin{itemize}
\item[a)] It is $G$-invariant: for $f \in M_{t,c}(\tau)$, $h \in M_{t,\bar{c}}(\tau^*)$, $B(g \ldot f, g \ldot h) = B(f, h)$.
\item[b)] For $x \in \h^*$, $f \in M_{t,c}(\tau)$, and $h \in M_{t,\bar{c}}(\tau^*)$, $B(xf, h) = B(f, D_{x}(h))$.
\item[c)] For $y \in \h$, $f \in M_{t,c}(\tau)$, and $h \in M_{t,\bar{c}}(\tau^*)$, $B(f, yh) = B(D_{y}(f), h)$.
\item[d)] The form is zero on elements in different degrees: if $f \in M_{t,c}(\tau)_i$ and $h \in M_{t,\bar{c}}(\tau^*)_j$, $i \ne j$, then $B(f, h) = 0$.
\item[e)] The form is the canonical pairing of $\tau$ and $\tau^*$ in the zeroth degree: for $v \in \tau = M_{t,c}(\tau)_0$, $f \in \tau^* = M_{t,\bar{c}}(\tau^*)_0$, $B(v, f) = (v, f)$.\end{itemize}
\end{prop}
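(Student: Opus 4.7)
The plan is to unpack the definition $B(f,h)=\phi(f)(h)$ and push all five properties through the observation that $\phi$ is an $H_{t,c}(G,\h)$-module map that is the identity in the lowest graded piece. The only real bookkeeping is to spell out how $H_{t,c}(G,\h)$ acts on $M_{t,\bar{c}}(\tau^*)^\dagger$ via the isomorphism $\sigma:H_{t,c}(G,\h)^{\op}\to H_{t,\bar c}(G,\h^*)$ described before the statement.

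First I will identify the $H_{t,c}$-action on the dual. Since $M_{t,\bar c}(\tau^*)^\dagger$ is naturally a left $H_{t,\bar c}(\h^*)^{\op}$-module, and the set map $a\mapsto \sigma(a)$ is an algebra isomorphism $H_{t,c}(\h)\to H_{t,\bar c}(\h^*)^{\op}$, the $H_{t,c}$-action on $\ell\in M_{t,\bar c}(\tau^*)^\dagger$ is $(a\cdot\ell)(h)=\ell(\sigma(a)\cdot h)$, where on the right we use the $H_{t,\bar c}(\h^*)$-action on $h\in M_{t,\bar c}(\tau^*)$. Reading off on generators: for $x\in\h^*$ (a ``Dunkl'' generator of $H_{t,\bar c}(\h^*)$), $(x\cdot\ell)(h)=\ell(D_x(h))$; for $y\in\h$ (a ``multiplication'' generator of $H_{t,\bar c}(\h^*)$), $(y\cdot\ell)(h)=\ell(yh)$; for $g\in G$, $(g\cdot\ell)(h)=\ell(g^{-1}\cdot h)$.

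Now (a), (b), (c) are immediate from $\phi$ being $H_{t,c}$-equivariant. For (b): $B(xf,h)=\phi(xf)(h)=(x\cdot\phi(f))(h)=\phi(f)(D_x(h))=B(f,D_x(h))$. For (c): using that $y\in\h$ acts on $f\in M_{t,c}(\tau)$ as the Dunkl operator $D_y$, $B(D_y f,h)=\phi(yf)(h)=(y\cdot\phi(f))(h)=\phi(f)(yh)=B(f,yh)$. For (a): $B(g\ldot f,g\ldot h)=\phi(g\ldot f)(g\ldot h)=(g\cdot\phi(f))(g\ldot h)=\phi(f)(g^{-1}\ldot(g\ldot h))=\phi(f)(h)=B(f,h)$. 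For (e): by definition $\phi$ is the identity on the lowest graded piece, so for $v\in\tau$, $\phi(v)\in\tau^{**}\subset M_{t,\bar c}(\tau^*)^\dagger$ is just $v$ under the canonical identification, and $B(v,f)=\phi(v)(f)=(v,f)$.

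For (d) I grade $M_{t,\bar c}(\tau^*)^\dagger$ by $(M_{t,\bar c}(\tau^*)^\dagger)_i=(M_{t,\bar c}(\tau^*)_i)^*$. A direct check (using the three action formulas above and the degrees $\deg x=1$, $\deg y=-1$, $\deg g=0$) shows the $H_{t,c}$-action on the dual is compatible with this grading, so $M_{t,\bar c}(\tau^*)^\dagger$ is a graded $H_{t,c}$-module. Since $M_{t,c}(\tau)$ is generated by $\tau$ in degree $0$ and $\phi$ is the identity on $\tau$, by Frobenius reciprocity (Lemma \ref{homexist}) $\phi$ is the unique such map and sends $M_{t,c}(\tau)_i$ into $(M_{t,\bar c}(\tau^*)^\dagger)_i=(M_{t,\bar c}(\tau^*)_i)^*$. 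Hence $\phi(f)$ vanishes on $M_{t,\bar c}(\tau^*)_j$ whenever $j\ne i$, which is exactly (d). The only mildly delicate step is convincing oneself about the correct grading convention on the dual so that $\sigma$ makes $M_{t,\bar c}(\tau^*)^\dagger$ into a graded $H_{t,c}$-module; everything else is a direct application of the universal property of $\phi$.
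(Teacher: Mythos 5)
Your proof is correct and is exactly the standard argument: the paper itself omits the proof of Proposition \ref{bform}, deferring to Section 3.11 of \cite{etingof-ma}, and your derivation—unpacking the $H_{t,c}(G,\h)$-module structure on $M_{t,\bar c}(\tau^*)^\dagger$ via the isomorphism with $H_{t,\bar c}(G,\h^*)^{\op}$ and then reading off (a)--(e) from the equivariance of $\phi$, the grading, and Frobenius reciprocity—is the intended one. The one point worth spelling out (which you gesture at correctly) is that with the convention $(M^\dagger)_i=(M_i)^*$ the generators $x$, $y$, $g$ act on the dual with degrees $+1$, $-1$, $0$ respectively, so that $\tau^{**}$ sits in degree $0$ with $\h$ acting by zero and Lemma \ref{homexist} applies; everything else follows as you wrote.
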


As $B$ respects the grading of $M_{t,c}(\tau)$ and $M_{t,\bar{c}}(\tau^*)$, we can think of it as a collection of bilinear forms on their finite-dimensional graded pieces. Let $B_i$ be the restriction of $B$ to the $M_{t,c}(\tau)_{i}\otimes M_{t,\bar{c}}(\tau^*)_{i}$. We define $\Ker B$ to be $\Ker \phi\subseteq M_{t,c}(G, \h, \tau)$. Singular vectors of positive degree in $M_{t,c}(\tau)$ are in $\Ker B$, and so are the submodules generated by them.

\subsection{Baby Verma modules $N_{t,c}(\tau)$ and irreducible modules $L_{t,c}(\tau)$}

\begin{defn}
For $\tau$ an irreducible $G$-representation, define the $H_{t,c}(G,\h)$-representation $L_{t,c}(\tau)=L_{t,c}(G, \h, \tau)$ as the quotient $M_{t,c}(G, \h, \tau)/\Ker B$.  
\end{defn}

The modules $L_{t,c}(\tau)$ are graded. We are going to show that they are irreducible, as in characteristic zero. A notable difference is that, while in characteristic zero, for generic $t$ and $c$, the module $M_{1,c}(\tau)$ is irreducible and hence equal to $L_{1,c}(\tau)$, in characteristic $p$ or for $t=0$ this never happens. On the contrary, all $L_{t,c}(\tau)$ are finite dimensional, and $M_{t,c}(\tau)$ always have a large submodule. Because of that, we sometimes prefer using \emph{baby Verma modules} defined below instead of Verma modules. The definition and the name are analogous to the ones used in Lie theory in characteristic $p$ and to the notation used in \cite{gordon} for rational Cherednik algebras in characteristic zero at $t=0$.

Let $((S\h^*)^G)_+$ denote the subspace of $G$-invariants in $S\h^*$ of positive degree.  At $t=1$, the subspace $((S\h^*)^G)^p_+$ is central in $H_{1,c}(G,\h)$, and $((S\h^*)^G)^p_+ M_{1,c}(\tau)$ is a proper submodule of $M_{1,c}(\tau)$. 

\begin{defn}
The \emph{baby Verma module} for the algebra $H_{1,c}(G,\h)$ is the quotient $$N_{1,c}(\tau)=N_{1,c}(G, \h, \tau) = M_{1,c}(\tau)/((S\h^*)^G)^p_+ M_{1,c}(\tau).$$
\end{defn}

Since $(S\h^*)^G$ is graded, $N_{1,c}(\tau)$ is a graded module. The subspace $((S\h^*)^G)^p_+ M_{1,c}(\tau)$ is contained in $\Ker B$. To see this, let $Z\in ((S\h^*)^G)^p_+ $ be an arbitrary  homogeneous element of positive degree $m$, $v\in \tau$ and $y\in \h$ arbitrary. Then $D_{y}(Z\otimes v)=(yZ)\ldot v=(Zy)\ldot v=Z\ldot (y\ldot v)=0$, so $Z\otimes v$ is singular and therefore in $\Ker B$.

Because of this, the form $B$  descends to the $N_{1,c}(\tau)$, and $L_{1,c}(\tau)$ can be alternatively realized as $N_{1,c}(\tau)/\Ker B$.

To define baby Verma modules at $t=0$, we use that $((S\h^*)^G)_+$ is central in $H_{0,c}(G,\h)$, so $((S\h^*)^G)_+ M_{0,c}(\tau)$ is a proper submodule of $M_{0,c}(\tau)$. 

\begin{defn}
The \emph{baby Verma module} for the algebra $H_{0,c}(G,\h)$ is the quotient $$N_{0,c}(\tau)=N_{0,c}(G, \h, \tau) = M_{0,c}(\tau)/((S\h^*)^G)_+ M_{0,c}(\tau).$$ 
\end{defn}

By the same arguments as above, it is graded, the form $B$ descends to it, and $L_{0,c}(\tau)$ can be alternatively realized as a quotient of $L_{0,c}(\tau)$ by the kernel of $B$.

%In particular, they are both graded. Their Hilbert series, $$\chi_{N_{1,c}(\tau)}(z)=\sum_i \dim N_{1,c}(\tau)_iz^i$$ and $$\chi_{N_{0,c}(\tau)}(z)=\sum_i \dim N_{0,c}(\tau)_iz^i$$ are related by $$\chi_{N_{1,c}(\tau)}(z)=\left( \frac{1-z^p}{1-z} \right)^n \chi_{N_{0,c}(\tau)}(z^p).$$

Next, we turn to the basic properties of modules $L_{t,c}(\tau)$ and $N_{t,c}(\tau)$. We will need the following lemma, which is a consequence of the Hilbert-Noether Theorem and can be found in \cite{smithl} as Corollary 2.3.2.
\begin{lemma}\label{hilbertnoether}
For any finite group $G$, field $F$, and a finite dimensional $F[G]$-module $\h$, the algebra of invariants $(S\h)^G$ is finitely generated over $F$, and $S\h$ is a finite integral extension of $(S\h)^G$.
\end{lemma}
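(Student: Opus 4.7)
The plan is to prove the two assertions in the stated order: first integrality of $S\h$ over $(S\h)^G$, from which finite generation of $S\h$ as an $(S\h)^G$-module follows immediately, and then to deduce finite generation of $(S\h)^G$ as an $F$-algebra from this via the Artin–Tate lemma. A key feature is that this strategy avoids any averaging over $G$, so it works uniformly in all characteristics — which matters here since $|G|$ may be divisible by $p = \mathrm{char}\, F$.

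For integrality, given any $f \in S\h$, I would introduce the \emph{orbit polynomial}
\[
P_f(T) \;=\; \prod_{g \in G}\bigl(T - g \ldot f\bigr) \;\in\; (S\h)[T].
\]
The set $\{g \ldot f : g \in G\}$ is permuted by $G$, so the coefficients of $P_f(T)$, being (up to sign) the elementary symmetric functions in this set, are $G$-invariant; hence $P_f(T) \in (S\h)^G[T]$. Since $P_f(T)$ is monic and $P_f(f)=0$, $f$ is integral over $(S\h)^G$. Applied to a basis $x_1,\dots,x_n$ of $\h$, this shows that each $x_i$ is integral over $(S\h)^G$. Because $S\h = F[x_1,\dots,x_n] = (S\h)^G[x_1,\dots,x_n]$ is generated over $(S\h)^G$ by finitely many integral elements, the standard ``finitely generated $+$ integral $\Rightarrow$ finite module'' result gives that $S\h$ is a finitely generated $(S\h)^G$-module. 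This already proves the second assertion of the lemma.

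For finite generation of $(S\h)^G$ over $F$, I would invoke the Artin–Tate lemma in the situation $F \subseteq (S\h)^G \subseteq S\h$: here $S\h$ is a finitely generated $F$-algebra (by $x_1,\dots,x_n$) and a finitely generated $(S\h)^G$-module (by the previous step), so Artin–Tate forces $(S\h)^G$ to be a finitely generated $F$-algebra. This completes both claims.

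There is no real obstacle here; the argument is classical and the only subtlety worth flagging in the writeup is the comment made above, that the proof is characteristic-free, so that the conclusion applies in the positive-characteristic setting of this paper even when the Reynolds operator $\frac{1}{|G|}\sum_{g\in G} g$ is unavailable. Since the result is a cited corollary from \cite{smithl}, in the paper itself I would likely just reference it rather than include the full argument; the sketch above is what I would write if a self-contained proof were required.
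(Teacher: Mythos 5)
Your argument is correct and is the classical characteristic-free proof (orbit polynomial for integrality, then Artin--Tate for finite generation of the invariant ring); the paper itself gives no proof, simply citing the result as Corollary 2.3.2 of \cite{smithl}, which is exactly what you anticipated in your closing remark. Nothing to flag.
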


The following proposition is unique to fields of positive characteristic. 
\begin{prop}\label{lc-findim}
All $N_{t,c}(\tau)$, and thus $L_{t,c}(\tau)$, are finite dimensional.
\end{prop}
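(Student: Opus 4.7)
The plan is to use the PBW identification $M_{t,c}(\tau)\cong S\h^*\otimes \tau$ to rewrite each baby Verma module as $(S\h^*/I_t)\otimes \tau$ for an appropriate ideal $I_t$ of $S\h^*$, and then to deduce finite-dimensionality from Lemma \ref{hilbertnoether} together with the Hilbert series identity recorded in Subsection \ref{notation}.

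First I would note that, under the PBW identification, the subalgebra $S\h^*\subset H_{t,c}$ acts on $M_{t,c}(\tau)$ by multiplication on the first tensor factor. Consequently, if $Z\subseteq S\h^*$ is any central graded subspace, then $Z\cdot M_{t,c}(\tau)$ equals $\langle Z\rangle \otimes \tau$, where $\langle Z\rangle$ is the ideal of $S\h^*$ generated by $Z$. Applying this with $Z=((S\h^*)^G)_+$ for $t=0$ and $Z=((S\h^*)^G)^p_+$ for $t=1$ yields
\[
N_{0,c}(\tau)\cong \bigl(S\h^*\big/\langle ((S\h^*)^G)_+\rangle\bigr)\otimes \tau,\qquad
N_{1,c}(\tau)\cong \bigl(S\h^*\big/\langle ((S\h^*)^G)^p_+\rangle\bigr)\otimes \tau.
\]
Since $\dim\tau<\infty$, it suffices to show that both displayed quotients of $S\h^*$ are finite-dimensional over $\Bbbk$.

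For the $t=0$ quotient, Lemma \ref{hilbertnoether} says $S\h^*$ is finitely generated as a module over $(S\h^*)^G$, so graded Nakayama forces $S\h^*/\langle ((S\h^*)^G)_+\rangle$ to be finite-dimensional; call its Hilbert series $h(z)$, which is therefore a polynomial. For the $t=1$ quotient I would invoke the Hilbert series formula stated in Subsection \ref{notation} with $V=((S\h^*)^G)_+$ and $m=p$: since, as $V$ is graded, the set $V^p$ generates the same ideal as $((S\h^*)^G)^p_+$, the formula gives
\[
\mathrm{Hilb}_{S\h^*/\langle ((S\h^*)^G)^p_+\rangle}(z)=h(z^p)\left(\frac{1-z^p}{1-z}\right)^n,
\]
which is again a polynomial. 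Hence $N_{1,c}(\tau)$ is finite-dimensional as well.

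Finally, since the excerpt shows that in each case $L_{t,c}(\tau)$ is a quotient of $N_{t,c}(\tau)$ by $\Ker B$, the finite-dimensionality of $L_{t,c}(\tau)$ follows immediately from that of $N_{t,c}(\tau)$. I do not expect a serious obstacle; the only mild point to verify is the identification of ideals in the $t=1$ case, which is immediate from the definition of the notation $V^m$ in Subsection \ref{notation}.
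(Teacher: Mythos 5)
Your proposal is correct and follows essentially the same route as the paper: both reduce the $t=1$ case to $t=0$ via the Hilbert series relation $\mathrm{Hilb}_{N_{1,c}(\tau)}(z)=\bigl(\frac{1-z^p}{1-z}\bigr)^n\mathrm{Hilb}_{N_{0,c}(\tau)}(z^p)$, and both deduce finite-dimensionality at $t=0$ from Lemma \ref{hilbertnoether} by observing that a finite module over $(S\h^*)^G$ becomes finite-dimensional over $\Bbbk$ after killing the ideal generated by $((S\h^*)^G)_+$. Your explicit identification $Z\cdot M_{t,c}(\tau)=\langle Z\rangle\otimes\tau$ is a harmless elaboration of what the paper leaves implicit, and invoking ``graded Nakayama'' is slightly more than needed (surjectivity onto the quotient already suffices), but the argument is sound.
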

\begin{proof}
%Let $((S\h^*)^G)^p_+ M_{1,c}(\tau) \subset \Ker B$. 
As explained in section \ref{notation}, the Hilbert series of a baby Verma module is defined as
$$\mathrm{Hilb}_{N_{1,c}(\tau)}(z)=\sum_i \dim N_{1,c}(\tau)_iz^i, \,\,\,\,\,\,\, \mathrm{Hilb}_{N_{0,c}(\tau)}(z)=\sum_i \dim N_{0,c}(\tau)_iz^i.$$ The series at $t=0$ and $t=1$ are related by $$\mathrm{Hilb}_{N_{1,c}(\tau)}(z)=\left( \frac{1-z^p}{1-z} \right)^n \mathrm{Hilb}_{N_{0,c}(\tau)}(z^p).$$

Because of this formula, and because $L_{t,c}(\tau)$ is a quotient of $N_{t,c}(\tau)$, it is enough to prove the proposition for $N_{0,c}(\tau)$.

The representation $\tau$ is finite-dimensional, so $M_{0,c}(\tau) \cong S\h^* \otimes \tau$ is a finite module over $S\h^*$.  By Lemma \ref{hilbertnoether}, $S\h^*$ is a finite module over $(S\h^*)^G$. %Since $\h^*$ is finite-dimensional, $(S\h^*)^G$ is a finite module over $((S\h^*)^G)^p$.

For any commutative ring $R$, maximal ideal $\mf{m}$, and finite $R$-module $M$, the quotient $M/\mf{m}M$ is a finite-dimensional vector space over $R/\mf{m}$.  Applying this to $\mf{m} = ((S\h^*)^G)_+$, $R = ((S\h^*)^G)$, and $M = M_{0,c}(\tau)$, it follows that $N_{0,c}(\tau)$ is finite dimensional over $\Bbbk$.
\end{proof}

\begin{rem} \label{sumisall} As in characteristic zero, the module $L_{t,c}(\tau)$ is irreducible. In characteristic zero one shows this by showing that $\Ker B$ is the sum of all graded proper submodules of $M_{t,c}(\tau)$, and that there is a natural inner $\mathbb{Z}$-grading on $M_{t,c}(\tau)$, and that all submodules are graded. In characteristic $p$ this fails, as there is only a natural inner $\mathbb{Z}/p\mathbb{Z}$-grading. There exist submodules of $M_{t,c}(\tau)$ which are not $\mathbb{Z}$-graded: for example, for any $f\in (S\h^*)^G$, the subspace $S\h^*(1+f^p)\otimes \tau$ is a proper submodule. In fact, the sum of all proper submodules of $M_{t,c}(\tau)$ is the whole $M_{t,c}(\tau)$ (the sum of all submodules of the form  $S\h^*(1+f^p)\otimes \tau$ equals $M_{t,c}(\tau)$). However, the situation is better if we consider only graded submodules, or if we let baby Verma modules take over the role of Verma modules. This is explained more precisely by the following results.
\end{rem}

To show irreducibility of $L_{t,c}(\tau)$, we will need the following form of Nakayama's lemma.  Recall that the \emph{Jacobson radical} of a commutative ring $R$, denoted $\rad(R)$, is the maximal ideal that annihilates all simple modules, or equivalently, the intersection of all maximal ideals. Also recall that $\Bbbk[[x_1, \ldots, x_n]]$ is local, so $\rad(\Bbbk[[x_1, \ldots, x_n]]) = \langle x_1, \ldots, x_n \rangle$.

\begin{lemma}[Nakayama]\label{nakayama}
Let $R$ be a commutative ring, $I \subset \rad(R)$ an ideal, and $M$ a finitely generated $R$-module.  Let $m_1, \ldots, m_n \in M$ be such that their projections generate $M/IM$ over $R/I$.  Then, $m_1, \ldots, m_n$ generate $M$ over $R$.
\end{lemma}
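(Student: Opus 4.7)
The plan is to reduce the statement to the standard ``$IN = N$ implies $N = 0$'' form and then invoke the determinant trick together with the defining property of the Jacobson radical.

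First, I would let $N = M/\langle m_1, \ldots, m_n\rangle$. The hypothesis that the images of $m_1, \ldots, m_n$ span $M/IM$ over $R/I$ is equivalent to saying $M = IM + \langle m_1, \ldots, m_n\rangle$, and passing to the quotient this yields $N = IN$. Since $M$ is finitely generated over $R$, so is $N$. Hence the proposition reduces to the claim: if $N$ is a finitely generated $R$-module with $IN = N$ and $I \subseteq \rad(R)$, then $N = 0$.

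To prove this reduced claim, I would pick generators $n_1, \ldots, n_k$ of $N$ and use $IN = N$ to write each $n_i = \sum_j a_{ij} n_j$ with $a_{ij} \in I$. Letting $A = (a_{ij})$, this says $(\mathrm{Id} - A)\vec n = 0$ where $\vec n = (n_1, \ldots, n_k)^t$. Multiplying by the adjugate matrix gives $\det(\mathrm{Id} - A) \cdot n_i = 0$ for every $i$. Expanding the determinant, $\det(\mathrm{Id} - A) = 1 - a$ for some $a \in I$ (every term except the one from the diagonal identity contributes an element of $I$).

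The key step is then to observe that any element of the form $1 - a$ with $a \in \rad(R)$ is a unit. This is the standard characterization of $\rad(R)$: if $1 - a$ were not a unit, it would lie in some maximal ideal $\mf m$, but $a \in \rad(R) \subseteq \mf m$ would then force $1 \in \mf m$, a contradiction. Hence $\det(\mathrm{Id} - A)$ is a unit, so $n_i = 0$ for all $i$, and $N = 0$. The only mild subtlety is this last step about the radical, which is really where the hypothesis $I \subseteq \rad(R)$ is used; the rest is the standard Cayley--Hamilton / determinant trick.
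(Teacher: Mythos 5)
Your proof is correct. The paper itself states this lemma without proof, treating it as a standard fact, so there is no ``paper proof'' to compare against. Your argument is the classical one: reduce to showing that a finitely generated module $N$ with $IN = N$ and $I \subseteq \rad(R)$ must vanish, apply the determinant trick (adjugate/Cayley--Hamilton) to produce a relation $(1 - a)N = 0$ with $a \in I$, and conclude by noting that $1 - a$ is a unit since $a$ lies in every maximal ideal. All three steps are carried out correctly, and the reduction from the ``generators'' form of Nakayama to the ``$IN = N$'' form is the right one.
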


\begin{lemma}\label{nakayamacor}
Let $L_{t,c}(\tau)_+$ be the  positively graded part of $L_{t,c}(\tau)$. If $v_1, \ldots, v_m \in L_{t,c}(\tau)$ are such that their projections $\bar{v}_1, \ldots, \bar{v}_m \in L_{t,c}(\tau)/L_{t,c}(\tau)_{+} \cong \tau$ span $\tau$ over $\Bbbk$, then $v_1, \ldots, v_m $ generate $L_{t,c}(\tau)$ as an $S\h^*$ module.
\end{lemma}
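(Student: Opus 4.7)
The plan is to apply Nakayama's Lemma (Lemma \ref{nakayama}) after extending the $S\h^*$-action on $L_{t,c}(\tau)$ to the local ring $\Bbbk[[x_1,\ldots,x_n]]$. The crucial preliminary observation is that $L_{t,c}(\tau)$, being a quotient of $M_{t,c}(\tau)=S\h^*\otimes\tau$, is generated as an $S\h^*$-module by its degree-zero part $\tau$. Consequently
\[
L_{t,c}(\tau)_+ \;=\; \h^*\cdot L_{t,c}(\tau) \;=\; \langle x_1,\ldots,x_n\rangle\cdot L_{t,c}(\tau).
\]

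Next, by Proposition \ref{lc-findim}, $L_{t,c}(\tau)$ is finite-dimensional and hence concentrated in bounded degrees. Since each $x_i\in\h^*$ raises degree by one, for any $v\in L_{t,c}(\tau)$ all but finitely many homogeneous monomials in the $x_i$ annihilate $v$, so the $S\h^*$-action extends canonically to an action of $R:=\Bbbk[[x_1,\ldots,x_n]]$: a power series acts on $v$ via the finite sum of its components of sufficiently low degree. With this extension, $L_{t,c}(\tau)$ is a finitely generated $R$-module, and with $I=\langle x_1,\ldots,x_n\rangle=\rad(R)$ we have $I\cdot L_{t,c}(\tau)=L_{t,c}(\tau)_+$. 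The hypothesis says that $\bar v_1,\ldots,\bar v_m$ span $L_{t,c}(\tau)/L_{t,c}(\tau)_+\cong\tau$ over $R/I=\Bbbk$, so Nakayama's Lemma yields that $v_1,\ldots,v_m$ generate $L_{t,c}(\tau)$ over $R$. Finally, if $w=\sum_i f_i\cdot v_i$ with $f_i\in R$, replacing each $f_i$ by its polynomial truncation of degree at most the top degree of $L_{t,c}(\tau)$ does not alter the action, so the $v_i$ also generate $L_{t,c}(\tau)$ over $S\h^*$.

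The only real obstacle is the mismatch between $S\h^*$ and its completion: Nakayama requires the relevant ideal to lie in the Jacobson radical, but $\rad(S\h^*)=0$ while $\rad(\Bbbk[[x_1,\ldots,x_n]])=\langle x_1,\ldots,x_n\rangle$. The finite-dimensionality of $L_{t,c}(\tau)$, together with its grading being bounded above, is precisely what makes the passage between the two settings legitimate; beyond this, no serious calculation is required.
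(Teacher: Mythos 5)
Your proof is correct and is essentially the paper's own argument: extend the action to $R=\Bbbk[[x_1,\ldots,x_n]]$, apply Nakayama's Lemma with $I=\langle x_1,\ldots,x_n\rangle=\rad(R)$, and use finite-dimensionality to truncate power series back to polynomials. You spell out the identification $L_{t,c}(\tau)_+ = I\cdot L_{t,c}(\tau)$ a bit more carefully than the paper does, but the route is the same.
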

\begin{proof}
This is a direct application of Nakayama's lemma, with $R = \Bbbk[[x_1, \ldots, x_n]]$, $M = L_{t,c}(\tau)$, and $I = \langle x_1, \ldots, x_n \rangle = \rad(R)$.  By Lemma \ref{nakayama}, $v_1, \ldots, v_m$ generate $L_{t,c}(\tau)$ as a $\Bbbk[[x_1, \ldots, x_n]]$-module. Since $L_{t,c}(\tau)$ is finite-dimensional, an infinite power series really acts on $M$ as a finite polynomial.
\end{proof}

\begin{prop}\label{lctau-irred}
The $L_{t,c}(\tau)$ are irreducible for every $t, c$ and $\tau$.
\end{prop}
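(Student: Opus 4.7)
The plan is to show that every nonzero submodule $N \subseteq L_{t,c}(\tau)$ is all of $L_{t,c}(\tau)$. By Lemma \ref{nakayamacor} (Nakayama), it is enough to show that the image of $N$ under the projection $\pi \colon L_{t,c}(\tau) \twoheadrightarrow L_{t,c}(\tau)/L_{t,c}(\tau)_+ \cong \tau$ is all of $\tau$; note that the identification of the quotient with $\tau$ uses property (e) of Proposition \ref{bform} to see that $\Ker B \cap \tau = 0$. Since $\pi$ is $G$-equivariant and $\tau$ is $G$-irreducible, it is further enough to verify that $\pi(N) \ne 0$, i.e.\ that $N$ is not contained in the positively graded part $L_{t,c}(\tau)_+$.

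The delicate point, as flagged by Remark \ref{sumisall}, is that $N$ need not be $\mathbb{Z}$-graded, so one cannot just extract a lowest-degree part. To get around this, I would let $i_0$ be the smallest integer such that some $v \in N$ has a nonzero component in degree $i_0$ in the decomposition $v = v_{i_0} + v_{i_0+1} + \cdots + v_d$ coming from $L_{t,c}(\tau) = \bigoplus_{i \ge 0} L_{t,c}(\tau)_i$. For such $v$ and any $y \in \h$, the element $yv$ also lies in $N$ and decomposes as $y v_{i_0} + y v_{i_0+1} + \cdots$; minimality of $i_0$ forces the degree-$(i_0-1)$ component $y v_{i_0}$ to vanish. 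Thus $v_{i_0}$ is a nonzero $\h$-singular vector of $L_{t,c}(\tau)$ in degree $i_0$, and it remains to rule out $i_0 > 0$.

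The main step, then, is to show that $L_{t,c}(\tau)$ has no nonzero singular vector in any positive degree. Given such a $\bar w \in L_{t,c}(\tau)_{i_0}$ with $i_0 > 0$, I would lift to $\tilde w \in M_{t,c}(\tau)_{i_0}$, so that $y \cdot \tilde w \in \Ker B$ for every $y \in \h$, and then verify $\tilde w \in \Ker B$ by pairing against arbitrary $h \in M_{t,\bar{c}}(\tau^*)$. If $\deg h \ne i_0$, property (d) of Proposition \ref{bform} gives $B(\tilde w, h) = 0$; if $\deg h = i_0 > 0$, I would use $M_{t,\bar{c}}(\tau^*) \cong S\h \otimes \tau^*$ to write $h = \sum_k y_k h_k'$ with $y_k \in \h$ and $h_k' \in M_{t,\bar{c}}(\tau^*)_{i_0-1}$, then apply property (c) to get $B(\tilde w, y_k h_k') = B(y_k \cdot \tilde w, h_k') = 0$ since $y_k \cdot \tilde w \in \Ker B$. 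Hence $\tilde w \in \Ker B$, so $\bar w = 0$, contradicting $i_0 > 0$. This pairing argument is the main obstacle; everything else is bookkeeping compensating for the lack of an inner $\mathbb{Z}$-grading on $M_{t,c}(\tau)$ in positive characteristic.
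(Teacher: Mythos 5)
Your proof is correct and rests on the same two pillars as the paper's — Lemma \ref{nakayamacor} and contravariance of $B$ from Proposition \ref{bform} — but is organized around a different intermediate claim. The paper works with an arbitrary nonzero element $f\in L_{t,c}(\tau)$: if its projection to degree $0$ vanishes, it takes the lowest nonzero graded piece $f_r$, uses non-degeneracy of $B$ on $L_{t,c}(\tau)$ to find a monomial $y_1^{a_1}\cdots y_n^{a_n}\in S^r\h$ and $v\in\tau^*$ with $B(f_r, y^a v)\ne 0$, and then applies contravariance $B(f_r, y^a v) = B(D^a f, v)$ to see that $D^a f$ has nonzero projection to degree $0$, at which point Nakayama finishes. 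You instead work with a nonzero submodule $N$, reduce via $G$-irreducibility of $\tau$ and Nakayama to showing $\pi(N)\ne 0$, extract the minimal degree $i_0$ appearing among graded components of elements of $N$ (this is how you handle the non-graded submodules flagged in Remark \ref{sumisall}), observe that minimality forces the degree-$i_0$ component to be $\h$-singular, and then prove separately that $L_{t,c}(\tau)$ has no nonzero singular vectors in positive degree — this is where contravariance enters, via $B(\tilde w, y_k h_k') = B(D_{y_k}\tilde w, h_k')=0$ together with the fact that $M_{t,\bar{c}}(\tau^*)_{i_0}=\h\cdot M_{t,\bar{c}}(\tau^*)_{i_0-1}$ for $i_0>0$. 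The paper's version avoids isolating the "no positive-degree singular vectors" statement by applying the whole Dunkl monomial at once; your version packages that step as a stand-alone fact, which is arguably more reusable. Both are correct and of comparable length.
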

\begin{proof}
Let $f$ be any nonzero element of $L_{t,c}(\tau)$. We claim that it generates $L_{t,c}(\tau)$ as an $H_{t,c}(G,\h)$ module.

If the projection $\bar{f}$ of $f$  to $L_{t,c}(\tau)_{0}\cong \tau$ is nonzero, then the set of $G$-translates of $\bar{f}$ spans the irreducible representation $\tau$, so by Lemma \ref{nakayamacor} the set of $G$-translates of $f$ generates $L_{t,c}(\tau)$ as an $S\h^*$ module, and $f$ generates $L_{t,c}(\tau)$ as an $H_{t,c}(G,\h)$-module.

If the projection of $f$ to $L_{t,c}(\tau)_{0}\cong \tau$ is zero, write $f = f_1 + \cdots + f_d$, with $f_i \in L_{t,c}(\tau)_{i}$ . The form $B$ is non-degenerate on $L_{t,c}(\tau)$, so $f\notin \Ker B$; it respects the grading so there is some $r > 0$ such that $f_r \not\in \Ker B$. The form is bilinear, so there exist a monomial $y_1^{a_1}\cdots y_n^{a_n} \in S^r\h$ and $v\in \tau^*$ such that $B(f_r, y_1^{a_1}\cdots y_n^{a_n}v) \ne 0$. By contravariance of $B$, and writing $D_{i}$ for $D_{y_{i}}$, this is equal to $0\ne B(D_1^{a_1}\cdots D_n^{a_n} f_r,v)= B(D_1^{a_1}\cdots D_n^{a_n} f,v)$. So, $D_1^{a_1}\cdots D_n^{a_n}f$ is a nonzero element of  $L_{t,c}(\tau)$, with a nonzero projection to $L_{t,c}(\tau)_{0}\cong \tau$. By the previous reasoning,  $D_1^{a_1}\cdots D_n^{a_n}f$ generates $L_{t,c}(\tau)$ as an $H_{t,c}(G,\h)$-module, and thus $f$ generates $L_{t,c}(\tau)$ as an $H_{t,c}(G,\h)$-module.
\end{proof}

\begin{cor}
A Verma module $M_{t,c}(\tau)$ has a unique maximal graded submodule.
\end{cor}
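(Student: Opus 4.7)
The plan is to identify $\Ker B$ as the unique maximal graded submodule of $M_{t,c}(\tau)$. The argument has two parts: first checking that $\Ker B$ is a proper graded submodule, and then showing every proper graded submodule is contained in it.

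First I would verify that $\Ker B \subseteq M_{t,c}(\tau)$ is proper and graded. Properness follows from Proposition~\ref{bform}(e): the restriction $B_0$ is the canonical pairing between $\tau$ and $\tau^*$, which is non-degenerate, so $\Ker B \cap \tau = 0$ and in particular $\Ker B \ne M_{t,c}(\tau)$. Gradedness follows from Proposition~\ref{bform}(d): since $B$ pairs elements only in matching degrees, a sum $f = \sum_i f_i$ lies in $\Ker B$ iff each homogeneous component $f_i$ does.

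Next I would show that any proper graded submodule $N \subseteq M_{t,c}(\tau)$ satisfies $N \subseteq \Ker B$. The key preliminary remark is that $N_0 = 0$: since $N_0$ is a $G$-submodule of the irreducible $G$-module $\tau = M_{t,c}(\tau)_0$, it is either $0$ or $\tau$, and if $N_0 = \tau$ then $N$ contains a generating set for $M_{t,c}(\tau)$ (by Frobenius reciprocity / the fact that $\tau$ generates the Verma module under the $S\h^*$-action), contradicting properness. Now suppose for contradiction that some homogeneous $f \in N_i$ fails to lie in $\Ker B$. Then $i > 0$, and by bilinearity of $B$ there exist $a_1, \ldots, a_n \ge 0$ with $a_1 + \cdots + a_n = i$ and some $v \in \tau^*$ with $B(f, y_1^{a_1}\cdots y_n^{a_n} v) \ne 0$. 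Iterating Proposition~\ref{bform}(c) gives
\[
0 \ne B(f, y_1^{a_1}\cdots y_n^{a_n} v) = B(D_{y_1}^{a_1}\cdots D_{y_n}^{a_n} f, \, v).
\]
But $D_{y_j}$ is precisely the action of $y_j \in \h$ on $M_{t,c}(\tau)$, so $D_{y_1}^{a_1}\cdots D_{y_n}^{a_n} f \in N_0$. This element pairs non-trivially with $v \in \tau^*$ in degree $0$, hence is nonzero, contradicting $N_0 = 0$. Therefore $N \subseteq \Ker B$, and $\Ker B$ is the unique maximal graded submodule.

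I do not anticipate a real obstacle: the argument is essentially the Verma-module analogue of the irreducibility proof in Proposition~\ref{lctau-irred}, with the twist that gradedness of $N$ replaces the role that Lemma~\ref{nakayamacor} played there (we cannot invoke Nakayama on the infinite-dimensional $M_{t,c}(\tau)$, but the assumption of a graded submodule with $N_0 = 0$ directly yields the contradiction). The only small subtlety worth noting is the cautionary remark in \ref{sumisall}: the conclusion fails without the gradedness hypothesis, which is why the statement of the corollary is restricted to graded submodules.
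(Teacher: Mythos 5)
Your proof is correct, but it takes a genuinely different route from the paper's. The paper's argument is a two-line soft one that never mentions the form $B$: every proper graded submodule $N$ has $N_0=0$ (your own preliminary remark), hence the sum of all proper graded submodules still has zero component in degree $0$ and is therefore proper; that sum is then the unique maximal graded submodule. You instead exhibit the maximal graded submodule explicitly as $\Ker B$, using contravariance (Proposition \ref{bform}(c)) to push a homogeneous $f\notin\Ker B$ down to a nonzero element of $N_0$ --- essentially the same mechanism as the irreducibility proof in Proposition \ref{lctau-irred}, transplanted to the graded setting where $N_0=0$ replaces Nakayama. Both are valid; the paper's version is more economical for the corollary as stated, while yours proves strictly more: it delivers at the same time the paper's subsequent corollary that $\Ker B=J_{t,c}(\tau)$, which the paper only obtains afterwards from the irreducibility of $M_{t,c}(\tau)/\Ker B$. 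The only cosmetic gap is that you check $\Ker B$ is proper and graded but not that it is a submodule; this is immediate since $\Ker B=\Ker\phi$ for the module homomorphism $\phi$ defining the form, and is worth one sentence.
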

\begin{proof}
Consider the sum of all graded submodules.  None of these submodules have elements in $L_{t,c}(\tau)_{0}\cong \tau$, since such elements generate the entire module $M_{t,c}(\tau)$, so their sum is a proper submodule of $M_{t,c}(\tau)$.
\end{proof}
\begin{cor}
A baby Verma module $N_{t,c}(\tau)$ has a unique maximal submodule.
\end{cor}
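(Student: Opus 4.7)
The plan is to adapt the strategy of the preceding corollary, but now the fact that $N_{t,c}(\tau)$ is \emph{finite-dimensional} (Proposition \ref{lc-findim}) lets us handle arbitrary (not necessarily graded) submodules via a Nakayama-type argument, which is exactly the obstruction discussed in Remark \ref{sumisall} for Verma modules. Concretely, I would show that every proper submodule $N \subsetneq N_{t,c}(\tau)$ is contained in the kernel of the $\Bbbk$-linear projection
\[
\pi : N_{t,c}(\tau) \twoheadrightarrow N_{t,c}(\tau)_0 \cong \tau
\]
onto the degree-zero piece; once this is established, the sum $J$ of all proper submodules is a submodule contained in $\ker\pi$, hence cannot be all of $N_{t,c}(\tau)$ (which has a nonzero piece in degree $0$), and so is the unique maximal submodule.

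To establish that claim, first note that $\pi$ is $G$-equivariant, since $G$ preserves the $\mathbb{Z}$-grading. Thus for any submodule $N$, the image $\pi(N)$ is a $G$-submodule of the irreducible $G$-module $\tau$, and is therefore either $0$ or all of $\tau$. If $\pi(N) = \tau$, pick $v_1,\ldots,v_m \in N$ whose projections $\pi(v_i)$ span $\tau$. The proof of Lemma \ref{nakayamacor} carries over verbatim with $N_{t,c}(\tau)$ in place of $L_{t,c}(\tau)$: its only ingredients were that the module is a finitely generated $S\h^*$-module and is finite-dimensional over $\Bbbk$, both of which hold for $N_{t,c}(\tau)$ by Proposition \ref{lc-findim}. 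Hence $v_1,\ldots,v_m$ generate $N_{t,c}(\tau)$ as an $S\h^*$-module, forcing $N = N_{t,c}(\tau)$ and contradicting properness. So $\pi(N) = 0$.

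The main thing to verify is really just that Lemma \ref{nakayamacor} transfers from $L_{t,c}(\tau)$ to $N_{t,c}(\tau)$, which it does trivially; the rest is formal. I do not expect a serious obstacle — the conceptual work was already done in proving Proposition \ref{lc-findim} and Proposition \ref{lctau-irred}, and what makes this statement \emph{not} hold for ordinary Verma modules (the existence of non-graded submodules like $S\h^*(1 + f^p)\otimes \tau$) is precisely killed when passing to $N_{t,c}(\tau)$, since $((S\h^*)^G)^p_+$ (or $((S\h^*)^G)_+$ at $t=0$) acts as zero there.
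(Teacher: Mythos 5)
Your proof is correct and follows essentially the same route as the paper: the paper also reduces to showing that every proper submodule of $N_{t,c}(\tau)$ projects to zero in degree $0$, by invoking the Nakayama argument of Lemma \ref{nakayamacor} (which, as you note, applies verbatim to $N_{t,c}(\tau)$ since it only uses finite-dimensionality and generation in degree $0$), and then observes that the sum of all proper submodules still has zero degree-$0$ part. The only cosmetic difference is that the paper argues element-by-element ($f_0=0$ for each $f$ in a proper submodule) while you argue with the whole image $\pi(N)$ and irreducibility of $\tau$; these are the same idea.
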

\begin{proof}
Let $N$ be any proper submodule, and $f\in N$ arbitrary nonzero element. Write $f = f_0 + \cdots + f_d $, with $f_i$ in the $i$-th graded piece. Baby Verma modules are finite-dimensional, $N$ is a proper submodule, so a similar argument as in Proposition \ref{lctau-irred} implies that $f_0 = 0$. Thus, any proper submodule has zero projection to the zeroth graded piece, and so the sum of all proper submodules is still proper.
\end{proof}

The unique maximal graded submodule of $M_{t,c}(\tau)$ descends to the unique maximal submodule of $N_{t,c}(\tau)$, which we will call $\bar{J}_{t,c}(\tau)$.  The following corollary follows by the irreducibility of $M_{t,c}(\tau)/\Ker B $.
\begin{cor}
The kernel of $B$ is $J_{t,c}(\tau)$.
\end{cor}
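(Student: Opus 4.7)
The plan is to combine the irreducibility of $L_{t,c}(\tau)=M_{t,c}(\tau)/\Ker B$ from Proposition \ref{lctau-irred} with the uniqueness of the maximal graded submodule of $M_{t,c}(\tau)$ established in the previous corollary. The whole argument is a short formal consequence of these two facts, together with the basic properties of $B$ from Proposition \ref{bform}.

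First I would verify that $\Ker B$ is a proper \emph{graded} submodule of $M_{t,c}(\tau)$. That it is an $H_{t,c}(G,\h)$-submodule is built into its definition, since $\Ker B=\Ker\phi$ for the $H_{t,c}(G,\h)$-module map $\phi:M_{t,c}(G,\h,\tau)\to M_{t,\bar c}(G,\h^*,\tau^*)^{\dagger}$. Gradedness follows from Proposition \ref{bform}(d): because $B(M_{t,c}(\tau)_i,M_{t,\bar c}(\tau^*)_j)=0$ whenever $i\ne j$, the kernel decomposes as $\Ker B=\bigoplus_i\Ker B_i$. Properness comes from Proposition \ref{bform}(e): in degree zero, $B_0$ is the canonical pairing between $\tau$ and $\tau^*$, which is nondegenerate, so $\Ker B_0=0$ and in particular $\Ker B\cap \tau=0$, whence $\Ker B\ne M_{t,c}(\tau)$.

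Since $J_{t,c}(\tau)$ is, by the corollary above, the unique maximal graded submodule of $M_{t,c}(\tau)$, we immediately obtain the inclusion $\Ker B\subseteq J_{t,c}(\tau)$. To upgrade this to equality, suppose for contradiction that the inclusion is strict. Then the image of $J_{t,c}(\tau)$ in $M_{t,c}(\tau)/\Ker B=L_{t,c}(\tau)$ would be a nonzero proper submodule, contradicting the irreducibility of $L_{t,c}(\tau)$ proved in Proposition \ref{lctau-irred}. Hence $\Ker B=J_{t,c}(\tau)$, as claimed.

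I do not expect any genuine obstacle: the only inputs are the grading of the form, the nondegeneracy of $B_0$, and the irreducibility of $L_{t,c}(\tau)$, all of which are in place. This is precisely why the author's hint (\emph{``follows by the irreducibility of $M_{t,c}(\tau)/\Ker B$''}) is essentially a complete proof.
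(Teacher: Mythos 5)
Your argument is correct and is exactly the route the paper intends: the paper simply states that the corollary ``follows by the irreducibility of $M_{t,c}(\tau)/\Ker B$,'' and your write-up supplies the standard details (that $\Ker B$ is a proper graded submodule, hence contained in $J_{t,c}(\tau)$, and that a strict containment would produce a nonzero proper submodule of the irreducible quotient $L_{t,c}(\tau)$). No gaps.
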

Thus, $$L_{t,c}(\tau) = M_{t,c}(\tau)/J_{t,c}(\tau)  = M_{t,c}(\tau)/\Ker B \cong N_{t,c}(\tau)/\Ker  B=N_{t,c}(\tau)/\bar{J}_{t,c}(\tau).$$

\subsection{Category $\mathcal{O}$}\label{catO}

We now define the category $\mathcal{O}$ of $H_{t,c}(G, \h)$ modules. The definition, which is somewhat different than in characteristic zero, is justified by Remark \ref{sumisall} and Proposition \ref{all}. 

%As in the characteristic zero case, we will be working in a subcategory of $H_{1,c}$-representations, The motivation for defining $\mathcal{O}$ in characteristic zero is that all irreducibles in this category arise uniquely as some $L_{1,c}(\tau)$ \cite{etingof-ma}, and our definition was formulated analogously.
\begin{defn}
The category $\mathcal{O}_{t,c}(G, \h)$ is the category of $\Z$-graded $H_{t,c}(G, \h)$-modules which are finite-dimensional over $\Bbbk$.
\end{defn}
We usually write $\mathcal{O}_{t,c}$ or $\mathcal{O}$ instead of  $\mathcal{O}_{t,c}(G, \h)$ when it is clear what the arguments are.

\begin{prop}\label{all}
For every irreducible $L \in \mathcal{O}_{t,c}(G, \h)$, there is a unique irreducible $G$-representation $\tau$ and $i \in \mathbb{Z}$ such that $L \cong L_{t,c}(G, \h, \tau)[i]$.
\end{prop}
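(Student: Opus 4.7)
The plan is to find a lowest-weight structure on $L$ and then apply Frobenius reciprocity (Lemma \ref{homexist}) to realize $L$ as an irreducible quotient of a Verma module.

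First, since $L$ is finite-dimensional and $\Z$-graded, only finitely many graded pieces $L_i$ are nonzero, so there exists a minimum integer $j$ with $L_j \ne 0$. Because the action of $\h \subseteq H_{t,c}(G,\h)$ has degree $-1$, any $y \in \h$ sends $L_j$ into $L_{j-1} = 0$; hence $\h$ acts as zero on $L_j$. Since $L_j$ is preserved by the degree-zero subalgebra $\Bbbk G$, it is a finite-dimensional $G$-representation, and we may pick any irreducible $G$-subrepresentation $\tau \subseteq L_j$.

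Next, I would apply Frobenius reciprocity: after shifting the grading so that $\tau$ sits in degree $0$, the inclusion $\tau \hookrightarrow L[j]$ extends to a homomorphism of $H_{t,c}(G,\h)$-modules $\phi: M_{t,c}(\tau) \to L[j]$. This $\phi$ is nonzero (it is the identity on $\tau$), so its image is a nonzero graded submodule of the irreducible module $L[j]$, hence equal to $L[j]$. Thus $L[j]$ is a graded quotient of $M_{t,c}(\tau)$, and $\Ker \phi$ is a proper graded submodule of $M_{t,c}(\tau)$. By the corollary to Proposition \ref{lctau-irred}, $M_{t,c}(\tau)$ has a unique maximal graded submodule, namely $J_{t,c}(\tau) = \Ker B$, so $\Ker \phi \subseteq J_{t,c}(\tau)$; irreducibility of $L[j]$ forces equality, and we conclude $L \cong L_{t,c}(\tau)[-j]$.

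For uniqueness, note that if $L \cong L_{t,c}(\tau)[i] \cong L_{t,c}(\tau')[i']$ as graded $H_{t,c}(G,\h)$-modules, then comparing lowest nonzero graded pieces determines both the shift and the lowest-weight representation: by construction $L_{t,c}(\tau)_0 \cong \tau$ as a $G$-module (the subspace $\tau \subseteq M_{t,c}(\tau)_0$ maps injectively into $L_{t,c}(\tau)$ because any nonzero element of $\tau$ generates all of $M_{t,c}(\tau)$, hence cannot lie in $J_{t,c}(\tau)$) and $L_{t,c}(\tau)_k = 0$ for $k < 0$. Thus the minimum-degree piece of $L$ is $\tau$ in degree $-i$ and also $\tau'$ in degree $-i'$, forcing $i = i'$ and $\tau \cong \tau'$.

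The one subtlety to be careful about, which I expect to be the main (minor) obstacle, is verifying that $L_{t,c}(\tau)_0$ really is isomorphic to all of $\tau$ and not a proper quotient, so that the uniqueness step goes through cleanly; this is where we invoke that no element of $\tau \subseteq M_{t,c}(\tau)$ can lie in $J_{t,c}(\tau)$, by irreducibility of $\tau$ as a $G$-module combined with the Nakayama-style argument already used in the proof of Proposition \ref{lctau-irred}.
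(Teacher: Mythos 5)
Your proof is correct and follows essentially the same route as the paper: locate the lowest graded piece, note that $\h$ kills it, apply Frobenius reciprocity to get a surjection from a Verma module, and identify the kernel with the unique maximal graded submodule $J_{t,c}(\tau)$. The only (harmless) variation is that you pick an arbitrary irreducible $G$-subrepresentation of the lowest piece and deduce $L_0\cong\tau$ afterwards from surjectivity, whereas the paper first argues directly that $L_0$ must be irreducible; your more careful treatment of uniqueness via $L_{t,c}(\tau)_0\cong\tau$ is exactly the fact the paper invokes.
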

\begin{proof}
Let $L \in \mathcal{O}_{t,c}$ be any irreducible module in category $\mathcal{O}$. It is graded and finite-dimensional, so there must be a lowest graded piece $L_i$.  Without loss of generality, we can shift indices so that the lowest graded piece is in degree zero. Further, if the degree zero part $L_0$, which is a $G$-representation, is reducible, then any proper $G$-subrepresentation of $L_0$ generates a proper $H_{t,c}(G,\h)$-subrepresentation of $L$. So, $L_0\cong \tau$ for some irreducible $G$-representation $\tau$.  By Proposition \ref{homexist}, there exists a nonzero graded homomorphism $\phi: M_{t,c}(\tau) \rightarrow L$.  Since $L$ is irreducible, this homomorphism is surjective, and $L$ is isomorphic to $M_{t,c}(\tau)/\Ker(\phi)$.  Since $J_{t,c}(\tau)$ is the unique maximal graded submodule, $\Ker(\phi) = J_{t,c}(\tau)$ and the result follows.  Uniqueness follows from the fact that $L_{t,c}(\tau)_i \cong \tau$.
\end{proof}

\subsection{A lemma about finite fields}

\begin{lemma}\label{ntuplesum}
Let $q = p^r$ be a prime power.  Let $f \in \Bbbk[x_1, x_2, \ldots, x_n]$ be a polynomial in $n$ variables, for which there exists a variable $x_i$ such that $$\deg_{x_i}(f) < q-1.$$  %Let $\mathcal{M}_n$ be the set of all $n$-tuples in $\F_q$.  Then, $\sum_{M \in \mathcal{M}_n} f(M) = 0$.
Then $$\sum_{x_1,\ldots x_n \in \F_q}f(x_1,\ldots x_n)=0. $$
\end{lemma}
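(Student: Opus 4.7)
My plan is to reduce to a monomial computation and then use the standard character sum identity for $\F_q^\times$. First I would use linearity of summation: writing $f = \sum_{(a_1,\ldots,a_n)} c_{a_1,\ldots,a_n} x_1^{a_1}\cdots x_n^{a_n}$, the hypothesis $\deg_{x_i}(f) < q-1$ says that every monomial with nonzero coefficient satisfies $a_i < q-1$. Since $\sum_{\vec x \in \F_q^n} f(\vec x) = \sum c_{a_1,\ldots,a_n}\prod_{j=1}^n \bigl(\sum_{x_j \in \F_q} x_j^{a_j}\bigr)$, it suffices to show that the factor corresponding to the variable $x_i$ in each monomial vanishes.

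The key computational step is to show that for $0 \le a < q-1$,
\[
\sum_{x \in \F_q} x^a = 0 \quad \text{in } \Bbbk.
\]
For $a = 0$ the sum is $q = p^r = 0$ in $\Bbbk$. For $0 < a < q-1$, restrict to $\F_q^\times$ and pick a generator $g$ of the cyclic group $\F_q^\times$ of order $q-1$; then $\sum_{x \in \F_q^\times} x^a = \sum_{k=0}^{q-2} g^{ka} = (g^{a(q-1)}-1)/(g^a-1)$, which is $0$ since $g^{a(q-1)} = 1$ while $g^a \ne 1$ (the latter because $0 < a < q-1$).

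Applying this with $a = a_i$, each monomial contributes $0$ to the total sum, completing the proof. There is no real obstacle here; the only subtlety worth flagging is the case $a_i = 0$, where one must remember that $q$ itself is $0$ in characteristic $p$, so the statement genuinely requires $\Bbbk$ to have characteristic $p$ (not merely that $q$ be a prime power in an abstract sense).
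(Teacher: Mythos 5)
Your proof is correct and follows essentially the same route as the paper's: reduce by linearity to the power sum $\sum_{x\in\F_q}x^a$ for $a<q-1$ and kill it using the multiplicative structure of $\F_q^\times$ (the paper multiplies by $j^a$ and concludes $(1-j^a)S_a=0$, whereas you sum a geometric series over a generator --- the same idea in different clothing). Your explicit treatment of the $a=0$ case, where the sum is $q=0$ in characteristic $p$, is a welcome detail that the paper's argument technically glosses over.
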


\begin{proof}
It is enough to prove the claim for monomials of degree $m<q-1$ in one variable.  Let $f=x^m$ and $S_m := \sum_{i \in \F_q} i^m$. For every $j\in \F_q$, $ j^m S_m = \sum_{i \in \F_q} (ij)^m = S_m,$ which is equivalent to $(1 - j^m)S_m = 0$ for all  $j \ne 0$. As $m<q-1$, there exists some $j$ such that $1 - j^m\ne 0$, and so $S_m = 0$.

% and the claim is true for polynomials in one variable. 

%If $n>1$ and $f=f'\cdot  x_n^{m}$, with $f'$ a polynomial in $x_1,\ldots x_{n-1}$ and $m<q-1$, then
%$$\sum_{x_1,\ldots x_n \in \F_q} f(x_1,\ldots x_n) = \sum_{x_1, \ldots, x_{n-1} \in \F_q } f'(x_1,\ldots x_{n-1}) \cdot \sum_{x_n \in \F_q} %x_n^{m} =$$ $$=\sum_{(x_1, \ldots, x_{n-1}) \in \mathcal{M}_{n-1}} f'(x_1,\ldots x_{n-1}) \cdot S_{m} = 0.$$
\end{proof}

\begin{rem}
In particular, the assumptions of the lemma are satisfied by all $f$ such that $\deg(f) < n(q-1).$
\end{rem}

\section{Characters} \label{characters}

\subsection{Definition and basic properties}
\begin{defn}\label{defchar}
Let $K_0(G)$ be the Grothendieck group of the category of finite dimensional representations of $G$ over $\Bbbk$. For $M=\oplus_{i}M_i$ any graded $H_{t,c}(G,\h)$ module with finite dimensional graded pieces, define its character to be the power series in formal variables $z, z^{-1}$ with coefficients in $K_0(G)$
$$\chi_{M}(z)=\sum_{i}[M_{i}] z^i,$$ and recall we defined its Hilbert series as $$\mathrm{Hilb}_{M}(z)=\sum_{i}\dim(M_{i})z^i.$$
\end{defn} If $M$ is in category $\mathcal{O}$, it is finite dimensional and its character is in $K_0(G)[z,z^{-1}]$.

The character of $M_{t,c}(\tau)$ is $$\chi_{M_{t,c}(\tau)}(z)=\sum_{i\ge 0}[S^{i}\h^* \otimes \tau]z^i,$$ and its Hilbert series is $$\mathrm{Hilb}_{M_{t,c}(\tau)}(z)=\frac{\dim(\tau)}{(1-z)^n}.$$ The character of $N_{t,c}(\tau)$ depends on whether $t=0$ or $t\ne0$; these cases are related by $$\chi_{N_{1,c}(\tau)}(z)=\chi_{N_{0,c}(\tau)}(z^p)\cdot \left( \frac{1-z^p}{1-z} \right)^n.$$ 

If $G$ is a reflection group for which the algebra of invariants $(S\h^*)^G$ is a polynomial algebra with homogeneous generators of degrees $d_1,\ldots d_n$, then the characters of baby Verma modules are:
$$\chi_{N_{0,c}(\tau)}(z)=\chi_{M_{0,c}(\tau)}(z)(1-z^{d_1})(1-z^{d_2})\ldots (1-z^{d_n}),$$
$$\chi_{N_{1,c}(\tau)}(z)=\chi_{M_{1,c}(\tau)}(z)(1-z^{pd_1})(1-z^{pd_2})\ldots (1-z^{pd_n}).$$

There is no known general formula for the characters of the irreducible modules $L_{t,c}(\tau)$, even in characteristic zero. The main focus of the second half of this paper is describing these modules for particular series of groups $G$, in terms of their characters, or through describing the generators for the maximal proper submodules $J_{t,c}(\tau)$, or through describing the composition series of baby Verma modules and Verma modules. 

It is clear from the definition that $$\mathrm{Hilb}_{L_{t,c}(\tau)}(z) = \sum_{i=0}^\infty \rank(B_i) z^i.$$ This is useful because matrices $B_{i}$ and their ranks can be calculated in many examples using algebra software. We used MAGMA \cite{magma} to do these calculations for small examples in order to form conjectures which became sections \ref{glnsec} and \ref{slnsec} of this paper. Some unused computational data of this kind can be found in the Appendix \ref{secdata}.

%We now begin a discussion of the Hilbert series, which we will call the \emph{character}, of various $L_{1,c}(\tau)$.  In particular, we can consider $c$ to be a point in $\Bbbk^n$, and we have the notion of a \emph{generic} character of $T \subset \Bbbk^n$, i.e. the character associated with taking $J_{1,c}(\tau)$ to be the elements which are in the kernel of $B$ for all $c \in T$.  Without mention of a particular subset $T \subset \Bbbk^n$, it is assumed that we take $T = k^n$, i.e. the generic character will mean taking $J_{1,c}(\tau)$ to be elements in the kernel of $B$ for all $c$.  

\subsection{Characters of $L_{t,c}(\tau)$ at generic value of the parameter $c$}

By definition, the $i$-th graded piece of $L_{t,c}(\tau)$ is, as a representation of $G$, equal to the quotient of $S^{i}\h^*\otimes \tau$ by the kernel of $B_{i}$. Let us fix $t$ and consider $c=(c_s)_s$ as variables; $B_i$ depends on them polynomially. Let $\Bbbk^{|conj|}$ be the space of functions from the finite set of conjugacy classes in $G$ to $\Bbbk$, and think of it as the space of all possible parameters $c$. 

Let $d$ be the dimension of $S^{i}\h^*\otimes \tau$ and let $r$ be the rank of $B_i$, seen as an operator over $\Bbbk[c]$. For $c$ outside of finitely many hypersurfaces in $\Bbbk^{|conj|}$, the rank of $B_i$ evaluated at $c$ is equal to $r$, and the kernel of $B_i$ is some $(d-r)$-dimensional representation of $G$, depending on $c$. All these representations have the same composition series. (To see this, let $V(c)$ be a flat family of $G$-representations, for example $\Ker B_i$ for generic $c$. Let $\{ \sigma_i \}_i$ be a complete set of pairwise non-isomorphic irreducible $\Bbbk G-$ modules, and for all $i$ let $\pi_i$ be a projective cover of $\sigma_i$. Then the number $[V(c):\sigma_i]$ of times $\sigma_i$ appears as a composition factor in $V(c)$ is equal to the dimension of $\Hom (\pi_i, V(c))$. So, for generic $c$ it is the same, and for special $c$ it might be bigger. But $\sum_i [V(c):\sigma_i]\dim (\sigma_i)=\dim V(c)$ is constant, so $[V(c):\sigma_i]$ does not depend on $c$, and all $V(c)$ have the same factors in their composition multiplicities. They might however not be isomorphic, because they might be different extensions of their irreducible composition factors.)

The map $c\mapsto \Ker (B_i)=J_{t,c}(\tau)_i$, defined on the open complement of hypersurfaces in  $\Bbbk^{|conj|}$, can be thought of as a rational function from $\Bbbk^{|conj|}$ to the Grassmannian of $(d-r)$-dimensional subspaces of $S^{i}\h^*\otimes \tau$. 

For $c$ in some finite family of  hypersurfaces in the parameter space $\Bbbk^{|conj|}$, the rank of $B_i$ evaluated at $c$ is smaller than $r$, and the dimension of the kernel $J_{t,c}(\tau)_i$ is larger than $d-r$. We will now use the above rational function to define a subspace $J_{t,0}(\tau)_i'\subseteq J_{t,0}(\tau)_i$ at $c=0$, which has similar properties to those $J_{t,0}(\tau)_i$ would have if $c=0$ was a generic point.

If $c=0$ is generic and rank of  $B_i$ at $c=0$ is $d$, let $J_{t,0}(\tau)_i'= J_{t,0}(\tau)_i$. Otherwise, pick a line in the parameter space $\Bbbk^{|conj|}$ which does not completely lie in one of the  hypersurfaces, and which passes through $0$. The composition of the inclusion of this line to $\Bbbk^{|conj|}$ and the rational map from $\Bbbk^{|conj|}$ to the Grassmannian is then a rational map from the punctured line to a projective space, and such a map can always be extended to a regular map on the whole line. This associates to $c=0$ a vector space $J_{t,0}(\tau)_i'$. It generally depends on the choice of a line in the parameter space, and it always has the following properties:
\begin{itemize}
\item $\dim(J_{t,0}(\tau)')_i=r-d$;
\item $J_{t,0}(\tau)_i'\subseteq \Ker B_i=J_{t,0}(\tau)_i$;
\item $J_{t,0}(\tau)'_i$ is $G$-invariant;
\item $J_{t,0}(\tau)'_i$ has the same composition series as $J_{t,c}(\tau)_i$ for generic $c$.
\end{itemize}
By making consistent choices for all $i$ (for example, by choosing the same line in the parameter space for all $i$), one can ensure an extra property:
\begin{itemize}
\item $J_{t,0}(\tau)'=\oplus_i J_{t,0}(\tau)'_i$ is a $H_{t,0}(G,\h)$ subrepresentation of $M_{t,0}(\tau)$.
\end{itemize}

This produces a subrepresentation $J'_{t,0}(\tau)$ at $c=0$ such that the quotient $M_{t,0}(\tau)/J_{t,0}(\tau)'$ behaves like $L_{t,c}(\tau)$ at generic $c$, even when $c=0$ is not generic. In particular, $M_{t,0}(\tau)/J_{t,0}(\tau)'$ and $L_{t,c}(\tau)$ at generic $c$ have the same character. 

\begin{example}
For $G=GL_{2}(\F_2)$, $\tau=\triv$, the form $B$ restricted to $M_{1,c}(\mathrm{triv})_4\cong S^{4}\h^*$ has a matrix, written here in the ordered basis $(x_1^4, x_1^3x_2, x_1^2x_2^2, x_1x_2^3, x_2^4)$:
$$B_4 = \left( \begin{array}{ccccc} c^2(c+1)&c^2(c+1)&c^2(c+1)&c^2(c+1)&0\\c^2(c+1)&c(c+1)&0&0&c^2(c+1)\\c^2(c+1)&0&0&0&c^2(c+1)\\c^2(c+1)&0&0&c(c+1)&c^2(c+1)\\0&c^2(c+1)&c^2(c
+1)&c^2(c+1)&c^2(c+1) \end{array}\right).$$
When $c\ne 0,1$, this matrix has rank $4$, and a one-dimensional kernel $J_{1,c}(\triv)_4$ spanned by $x_1^4+x_1^2x_2^2+x_2^4$. For $c=0$, the matrix is zero and $J_{1,0}(\triv)_4$ is the whole $S^4\h^*$. The above procedure defines $J_{1,0}(\triv)'_4$ to be $\Bbbk(x_1^4+x_1^2x_2^2+x_2^4)$. 
\end{example}

We will now draw conclusions about the character of $L_{t,c}(\tau)$ for generic $c$ using information about $M_{t,0}(\tau)/J_{t,0}'(\tau)$.

\begin{lemma}\label{pthpowergen}
 Let $M$ be a free finitely-generated graded $S\h^*$-module with free generators $b_1, \ldots, b_m$, and $N$ a graded submodule of $M$. For $f\in S\h^*$, $y\in \h$, define $\partial_y f b_i = (\partial_y f) b_i $. If $N$ is stable under $\partial_y$ for all $y\in \h$, then it is generated by  elements of the form $\sum f_i^p b_i$ for some $f_i \in S\h^*$.
\end{lemma}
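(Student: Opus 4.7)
The plan is to decompose any element of $N$ using the fact that $S\h^*$ is free over its Frobenius subring, and then to use the derivation-stability of $N$ to show that the ``$p$-th-power components'' of any element of $N$ already lie in $N$.

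More concretely, set $R := \Bbbk[x_1^p,\ldots,x_n^p] \subseteq S\h^*$, and note that $S\h^*$ is free over $R$ with basis $\{x^I := x_1^{I_1}\cdots x_n^{I_n} \mid I \in \{0,1,\ldots,p-1\}^n\}$. Every $v \in M$ then has a unique expansion
$$v \;=\; \sum_{I \in \{0,\ldots,p-1\}^n} x^I v_I, \qquad v_I \in \bigoplus_i R\,b_i.$$
Since $\Bbbk$ is algebraically closed of characteristic $p$, the Frobenius $f\mapsto f^p$ maps $S\h^*$ surjectively onto $R$, so $\bigoplus_i R\,b_i$ is precisely the set of elements of the form $\sum_i f_i^p b_i$. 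The lemma therefore reduces to proving that $v_I \in N$ for every $I$ whenever $v \in N$, since then $v = \sum_I x^I v_I$ displays $v$ as an $S\h^*$-linear combination of such $p$-th-power elements of $N$.

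The main step is a downward induction on $|I| := I_1 + \cdots + I_n$. Since each $v_I$ has components in $R$ and $\partial_{y_j}$ annihilates $R$, a direct computation using the product rule and the identity $\partial^J(x^I) = \frac{I!}{(I-J)!}\, x^{I-J}$ (for $J \le I$ componentwise, and $0$ otherwise) yields
$$\partial^J v \;=\; \sum_{I \ge J} \frac{I!}{(I-J)!}\, x^{I-J}\, v_I,$$
where $\partial^J := \partial_{y_1}^{J_1}\cdots\partial_{y_n}^{J_n}$ and multi-index conventions are used. Crucially, for $I, J \in \{0,\ldots,p-1\}^n$ all factorial coefficients appearing are products of integers strictly less than $p$ and hence nonzero in $\Bbbk$. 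For the maximal index $I_0 = (p-1,\ldots,p-1)$, only $I = I_0$ contributes to $\partial^{I_0} v$, giving a nonzero scalar multiple of $v_{I_0}$, so $v_{I_0} \in N$. For the inductive step, assuming all $v_J$ with $|J|>k$ lie in $N$, I would form $u := v - \sum_{|J|>k} x^J v_J$, which still lies in $N$ because each subtracted term is an $S\h^*$-multiple of an element of $N$. Applying $\partial^I$ to $u$ for $|I|=k$: only indices $J$ with both $J \ge I$ and $|J| \le k$ contribute, and these two conditions force $J = I$, giving $\partial^I u = I!\, v_I$, whence $v_I \in N$.

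The argument is essentially mechanical once the Frobenius decomposition is in place; the only real obstacle is keeping track of multi-indices and verifying that the relevant factorials are invertible in $\Bbbk$, which is guaranteed by the restriction to $\{0,\ldots,p-1\}^n$. No input beyond the derivation-stability of $N$ and the freeness of $S\h^*$ over $R$ is required.
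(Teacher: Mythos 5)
Your proof is correct and rests on the same idea as the paper's: decompose over the Frobenius subring using the monomial basis $\{x^I : I \in \{0,\dots,p-1\}^n\}$, then use derivative-stability together with the invertibility in $\Bbbk$ of the factorial coefficients of exponents below $p$ to show each component $v_I$ lies in $N$. The only difference is organizational — you run a single downward induction on $|I|$ over all variables at once, whereas the paper isolates one variable at a time via an invertible triangular matrix of falling factorials and then recurses on the remaining variables.
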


\begin{proof}
First, assume there is only one generator, so $M \cong S\h^*$ as left $S\h^*$ modules.  Let $N' = \{f^p \mid f \in S\h^*\} \cap N$.  We claim that $S\h^*N' = N$. 

Clearly, $S\h^*N' \subset N$. To show that $N \subset S\h^* N'$, we need to show that any $f\in N$ can be written as a sum of elements of the form $h(x_1, \dots, x_n) f'(x_1^p, \ldots, x_n^p)$, for some  $h\in S\h^*$ and $f'(x_1^p, \ldots, x_n^p) \in N$.

As $N$ is graded, assume $f$ is homogeneous of degree $d$. Write it as
$$f = \sum_{i=0}^{p-1} x_1^i f_i(x_1^p, x_2, \ldots, x_n).$$ The space $N$ is stable under all partial  derivatives, so for each $j=0,\ldots, p-1$,
$$x_1^j \partial_1^j f = \sum_{i=1}^{p-1} i(i-1)\ldots(i-j+1) x_1^i f_i(x_1^p, x_2, \ldots, x_n) \in N.$$
The coefficient $i(i-1)\ldots(i-j+1)$ is zero for $i < j$ and is nonzero for $i = j$, so the matrix $[i(i-1)\ldots(i-j+1)]_{i,j}$ is  invertible, implying that $x_1^i f_i(x_1^p, x_2, \ldots, x_n)$ is in $N$ for all $i$, and therefore (after applying $\partial_{1}^i$), also $f_i(x_1^p, x_2, \ldots, x_n) \in N$.
 
Applying the same argument on each $f_i$ for $x_2, \ldots, x_n$, it follows that $f$ is of desired form. The claim for $M\cong \oplus S\h^* b_{i}$ follows directly from the one for $S\h^*$.
\end{proof}

Let $S^{(p)}\h^*$ be the quotient of $S\h^*$ by the ideal generated by $x_1^p,\ldots x_n^p$.

\begin{prop}\label{reducedchar}
The character of $L_{1,c}(\tau)$, for generic  value of $c$, is of the form $$\chi_{L_{1,c}(\tau)}(z) =\chi_{S^{(p)}\h^*}(z) H(z^p)$$ for $H\in K_{0}[z]$ the character of some graded $G$-representation. In particular, the Hilbert series of $L_{1,c}(\tau)$ is of the form 
$$\mathrm{Hilb}_{L_{1,c}(\tau)}(z) =\left( \frac{1-z^p}{1-z}\right)^n\cdot h(z^p),$$ for $h$ a polynomial with nonnegative integer coefficients. 
\end{prop}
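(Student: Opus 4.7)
The plan is to reduce to a commutative-algebra computation on the module $M_{1,0}(\tau)/J_{1,0}(\tau)'$, which by the discussion preceding the proposition has the same character as $L_{1,c}(\tau)$ for generic $c$. The key structural input will be Lemma \ref{pthpowergen}.

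First I would verify that $J_{1,0}(\tau)'\subseteq M_{1,0}(\tau)$ is stable under each partial derivative $\partial_y$ for $y\in\h$. For any $c$ the submodule $J_{1,c}(\tau)$ is stable under the Dunkl operators $D_y^{(c)}=\partial_y-\sum_{s\in S} c_s\frac{(y,\alpha_s)}{\alpha_s}(1-s)$. Since $J_{1,0}(\tau)'$ is obtained as the limit in the Grassmannian of $J_{1,sc_0}(\tau)$ along a line $c=sc_0$ as $s\to 0$, and $D_y^{(sc_0)}$ varies continuously in $s$ with limit $\partial_y$ at $s=0$, the closedness of the stability condition in the Grassmannian forces $J_{1,0}(\tau)'$ to be $\partial_y$-stable.

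Next I would apply Lemma \ref{pthpowergen} with $M=S\h^*\otimes\tau$ viewed as a free $S\h^*$-module on a basis $\{b_i\}$ of $\tau$ and submodule $J_{1,0}(\tau)'$, obtaining that $J_{1,0}(\tau)'$ is generated as an $S\h^*$-module by elements of the form $\sum_i f_i^p\otimes b_i$, all of which lie in $R_0\otimes\tau$, where $R_0:=\Bbbk[x_1^p,\ldots,x_n^p]$. Setting $\widetilde N=J_{1,0}(\tau)'\cap(R_0\otimes\tau)$, an $R_0\rtimes G$-submodule, we have $J_{1,0}(\tau)'=S\h^*\cdot\widetilde N$. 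Since $S\h^*$ is free of rank $p^n$ over $R_0$, tensoring the short exact sequence $0\to\widetilde N\to R_0\otimes\tau\to R'\to 0$ (where $R':=(R_0\otimes\tau)/\widetilde N$) over $R_0$ with $S\h^*$ gives a graded $G$-equivariant isomorphism
$$M_{1,0}(\tau)/J_{1,0}(\tau)'\cong S\h^*\otimes_{R_0}R'.$$
Here $R'$ is supported in degrees divisible by $p$ and is finite-dimensional, since the split injection $R_0\hookrightarrow S\h^*$ induces an embedding $R'\hookrightarrow S\h^*\otimes_{R_0}R'$, and the latter has the same character as the finite-dimensional $L_{1,c}(\tau)$.

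To finish, I would exploit the finite $R_0^+$-adic filtration of $R'$. Its successive quotients are $\Bbbk G$-modules on which $R_0$ acts through $R_0\twoheadrightarrow\Bbbk$, and for any such $\Bbbk G$-module $W$ one has the $G$-equivariant identification $S\h^*\otimes_{R_0}W\cong S^{(p)}\h^*\otimes_\Bbbk W$. Flatness of $S\h^*$ over $R_0$ and additivity of characters along short exact sequences then yield
$$\chi_{S\h^*\otimes_{R_0}R'}(z)=\chi_{S^{(p)}\h^*}(z)\cdot\chi_{R'}(z)=\chi_{S^{(p)}\h^*}(z)\cdot H(z^p),$$
where $H(z)\in K_0(G)[z]$ records the character of $R'$ with its grading divided by $p$. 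The Hilbert series statement is then immediate from $\mathrm{Hilb}_{S^{(p)}\h^*}(z)=\left(\tfrac{1-z^p}{1-z}\right)^n$, with $h$ a polynomial in nonnegative integer coefficients because $R'$ is a finite-dimensional graded $G$-module. The main obstacle is the derivative-stability step: the construction of $J_{1,0}(\tau)'$ is an ad hoc rational extension that a priori depends on the chosen line, so one must verify that the limit inherits $\partial_y$-stability from the Dunkl-stability of the generic fibers; once this is settled, the remainder is a routine application of Lemma \ref{pthpowergen} combined with flat base change for $R_0\hookrightarrow S\h^*$.
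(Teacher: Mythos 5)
Your proposal is correct and follows essentially the same route as the paper: reduce to $M_{1,0}(\tau)/J_{1,0}(\tau)'$, invoke Lemma \ref{pthpowergen} to see that $J_{1,0}(\tau)'$ is generated by $p$-th-power elements, and then use the freeness of $S\h^*$ over $R_0 = (S\h^*)^p$ to factor the character. The paper's version phrases the final step as a ``multiplication map'' $\mu\colon S^{(p)}\h^*\otimes\big((S\h^*)^p\otimes\tau\big)/J^* \to S\h^*\otimes\tau/J_{1,0}(\tau)'$, asserted to be an isomorphism of graded $G$-representations, and it does not expand on why $J_{1,0}(\tau)'$ is $\partial_y$-stable beyond citing the ``consistent choices'' property from the preceding discussion.

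Two places where your write-up is genuinely more careful than the paper's. First, the $\partial_y$-stability of $J_{1,0}(\tau)'$: the paper secures this by the bullet point before the proposition asserting that $J_{t,0}(\tau)'$ can be arranged to be an $H_{t,0}$-submodule, but does not explain the mechanism; your argument --- that the Dunkl operators $D_y^{(sc_0)}$ depend algebraically on $s$ and limit to $\partial_y$, and that stability under a continuously varying family of operators is a closed condition on the Grassmannian --- is exactly what makes that bullet point true, so you have filled in a real gap. Second, the character factorization: the paper's $\mu$, read literally, requires a $G$-equivariant splitting of $S\h^*\twoheadrightarrow S^{(p)}\h^*$, which generally does not exist. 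The correct $G$-equivariant statement is the isomorphism $S\h^*\otimes_{R_0}R' \cong M_{1,0}(\tau)/J_{1,0}(\tau)'$ (flat base change along $R_0\hookrightarrow S\h^*$), and then one extracts $\chi_{S^{(p)}\h^*}(z)\,\chi_{R'}(z)$ by the $R_0^+$-adic filtration argument you give, using additivity of characters in $K_0(G)$. Both of your refinements are sound, and the overall structure of the argument matches the paper's.
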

\begin{proof}
As commented above, the character of $L_{1,c}(\tau)=M_{1,c}(\tau)/J_{1,c}(\tau)$ is the same for all $c$ outside of finitely many  hypersurfaces, and it is equal to the character of $M_{1,0}(\tau)/J_{1,0}(\tau)'$. At these values of parameter, $t=1$ and $c=0$, Dunkl operators are particularly simple, and equal to partial derivatives: $D_{y}=\partial_y$. By the previous lemma, $J_{1,0}(\tau)'$ is generated by $p$-th powers. Let $f_{i}(x_1^p,\ldots x_n^p)\otimes v_i$, for some $f_i\in S\h^*$, $v_i\in \tau$, be these generators. 

Define $J^*$ to be the $(S\h^*)^p$-module generated by $f_{i}(x_1^p,\ldots x_n^p)\otimes v_i$. Let the \emph{reduced module}  $R_{t,c}(\tau)$ be the $\Bbbk [G] \ltimes S\h^*$-module defined as the quotient of $S\h^*\otimes \tau$ by the ideal generated by $f_{i}(x_1,\ldots x_n)\otimes v_i$. Call its character (in the sense of Definition \ref{defchar}) the \emph{reduced character} of $L_{t,c}(\tau)$, and denote it by $H(z)\in K_0(G)[z]$.

Consider the multiplication map
$$\mu: S^{(p)}\h^* \otimes ((S\h^*)^p \otimes \tau)/J^* \to S\h^*\otimes \tau/J_{1,0}(\tau)'.$$ It is an isomorphism of graded $G$-representations, so it preserves characters. From this it follows that for generic  $c$,
 $$\chi_{L_{1,c}(\tau)}(z)=\chi_{M_{1,0}(\tau)/J_{1,0}(\tau)'}(z)=\chi_{S^{(p)}\h^*}(z) H(z^p).$$

\end{proof}

By inspecting the proof and using that $c$ is non-generic on a union of finitely many  hypersurfaces,  one can strengthen the claim of the proposition as follows: for any hyperplane $P$ passing through the origin in the space of functions from the conjugacy classes of $G$ to $\Bbbk$, there exists a function $H_P(z)\in K_0(G)[z]$ such that, for $c$ generic in $P$, the character of $L_{t,c}(\tau)$ is of the form $\chi_{S^{(p)}\h^*}(z) H_P(z^p)$.

\subsection{A dimension estimate for $L_{1,c}(\tau)$}

\begin{lemma}
Any irreducible $H_{1,c}(G,\h)$-representation has dimension less than or equal to $p^n |G|$.
\end{lemma}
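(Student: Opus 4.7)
The plan hinges on the fact that in characteristic $p$, the algebra $H_{1,c}(G, \h)$ contains a large central subalgebra that makes every irreducible module finite-dimensional with a controlled central character.

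My first step is to identify the relevant central subalgebra. The paper shows that $((S\h^*)^G)^p_+$ is central in $H_{1,c}$; by the anti-isomorphism $H_{1,c}(G, \h)^{op} \cong H_{1,\bar c}(G, \h^*)$ from Section \ref{formdef}, the symmetric subspace $((S\h)^G)^p_+$ is also central. Together they generate a commutative central subalgebra $Z_0 \subseteq Z(H_{1,c})$, and by Hilbert--Noether (Lemma \ref{hilbertnoether}) combined with the fact that each $a \in (S\h^*)^G$ satisfies $T^p - a^p = 0$ over $((S\h^*)^G)^p$ (and symmetrically for $\h$), $H_{1,c}$ is a finite module over $Z_0$. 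By Schur's lemma, $Z_0$ acts on the irreducible $L$ via a character $\chi : Z_0 \to \Bbbk$, so $L$ descends to a module over the finite-dimensional quotient $A_\chi := H_{1,c} \otimes_{Z_0} \Bbbk_\chi$ and is itself finite-dimensional.

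Next I would exploit the $\h$-eigenspace decomposition of $L$. Since $\h$ is abelian in $H_{1,c}$ and $L$ is finite-dimensional over algebraically closed $\Bbbk$, we have $L = \bigoplus_\Lambda L_{(\Lambda)}$ into generalized eigenspaces. The central character of $((S\h)^G)^p$ forces all appearing $\Lambda$ (via unique $p$-th roots in $\Bbbk$) to sit over a single character of $(S\h)^G$, and hence into a single $G$-orbit. Since $G$ permutes the eigenspaces by $g L_{(\Lambda)} = L_{(g \ldot \Lambda)}$, we obtain $L \cong \mathrm{Ind}_{G_{\Lambda_0}}^G L_{(\Lambda_0)}$ as $G$-modules, so $\dim L = [G : G_{\Lambda_0}] \cdot \dim L_{(\Lambda_0)}$ for any chosen representative $\Lambda_0$.

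The crucial final step is to bound $\dim L_{(\Lambda_0)} \leq p^n \cdot |G_{\Lambda_0}|$. On $L_{(\Lambda_0)}$, the centrality of $f^p$ for $f \in (S\h)^G$ and the fact that it acts by the scalar $f(\Lambda_0)^p$ (coming from $\chi$) produces the identity $(f - f(\Lambda_0))^p = 0$; at a regular $\Lambda_0$ in the polynomial-invariants case, these identities cut out, after an invertible change of variables, the Frobenius-thickened maximal ideal $(\tilde y_1^p, \ldots, \tilde y_n^p)$ in the completion of $S\h$, so $S\h$ acts on $L_{(\Lambda_0)}$ through a local algebra of dimension at most $p^n$. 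Combined with cyclicity of $L_{(\Lambda_0)}$ over the stabilizer subalgebra $\Bbbk G_{\Lambda_0} \ltimes S\h$ (a Clifford-type consequence of the irreducibility of $L$), this gives $\dim L_{(\Lambda_0)} \leq p^n \cdot |G_{\Lambda_0}|$ and hence $\dim L \leq p^n |G|$. The main obstacle is making the cyclicity and local-dimension arguments rigorous in settings where $(S\h)^G$ may fail to be polynomial, which requires care with modular invariant theory.
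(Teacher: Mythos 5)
The proposal is a genuinely different approach from the paper's. The paper establishes the bound abstractly via the theory of polynomial identity algebras: it uses the localization lemma to reduce to $c=0$, shows that $\Frac(Z)\otimes_Z H_{1,0}$ is central simple of dimension $|G|^2p^{2n}$ over $\Frac(Z)$ and hence a matrix algebra of size $|G|p^n$, concludes that $H_{1,c}$ satisfies the standard identity $s_{2|G|p^n}$, and then invokes the fact that irreducibles of a PI algebra of degree $r$ have dimension $\le r$. Your proposal instead tries to reproduce the positive-characteristic Lie-theory strategy: Schur's lemma on a big central subalgebra, generalized $\h$-eigenspace decomposition, induction from a point stabilizer, and a local dimension count. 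That approach, if completed, would yield more explicit structural information, but as written it has two real gaps.

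First, the cyclicity of $L_{(\Lambda_0)}$ over $\Bbbk G_{\Lambda_0}\ltimes S\h$ is asserted as a ``Clifford-type consequence'' but does not follow from the irreducibility of $L$ over $H_{1,c}$. Clifford theory gives cyclicity statements when an irreducible module is restricted to a normal subalgebra and one passes to an isotypic block; here $\Bbbk G\ltimes S\h$ is not normal in $H_{1,c}$ in any operative sense, and the $\h^*$-action mixes the $\h$-eigenspaces, so irreducibility over $H_{1,c}$ does not immediately force $L_{(\Lambda_0)}$ to be cyclic over the subalgebra $\Bbbk G_{\Lambda_0}\ltimes S\h$ that preserves it. Some genuine argument (e.g., a filtration/PBW argument showing that a single $\h$-weight vector already generates $L_{(\Lambda_0)}$ under the lowering operators, in the spirit of baby Verma modules) is needed here.

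Second, the local-dimension bound is only justified where you say it is: at an unramified $\Lambda_0$, i.e., one with trivial stabilizer, where an étale change of variables replaces $(f_i - f_i(\Lambda_0))^p$ by $\tilde y_i^p$. But non-regular $\Lambda_0$ genuinely occur. For instance, if the central character of $((S\h)^G)^p_+$ on $L$ is zero, every appearing $\Lambda$ satisfies $f(\Lambda)=0$ for all positive-degree invariants $f$, so $\Lambda_0 = 0$, the most singular point, with $G_{\Lambda_0}=G$. There the localization of $S\h/\langle (f_i-f_i(0))^p\rangle$ at $0$ has dimension $p^n|G_{\Lambda_0}| = p^n|G|$, not $p^n$, and combining that with cyclicity over $\Bbbk G_{\Lambda_0}\ltimes S\h$ would overshoot by a factor of $|G_{\Lambda_0}|$. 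To salvage this case you would need cyclicity over $S\h$ alone, which is even less clear. Finally, you note yourself that the whole framework presumes polynomial invariants; the paper's PI-theoretic route (relying only on the Hilbert--Noether lemma and the localization lemma) avoids all of these hypotheses.
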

\begin{proof}

We begin with a definition, which will only be used in this proof.  Let $A$ be an algebra.  A \emph{polynomial identity} is a nonzero  polynomial $f(x_1, \ldots, x_r)$ in non-commuting variables $x_1,\ldots x_r$, with a property that $f(a_1, \ldots, a_r) = 0$ for all $a_1, \ldots, a_r \in A$.  Given an algebraically closed field $\Bbb\Bbbk$, a \emph{polynomial identity algebra}, or \emph{PI algebra} is a $\Bbbk$-algebra $A$ that satisfies a polynomial identity.  We say a PI algebra has degree $r$ if it satisfies the polynomial identity $s_{2r} = \sum_{\sigma \in S_{2r}}\sgn(\sigma) \prod_{i=1}^{2r} x_{\sigma(i)}$.

Our first claim is that $H_{1,c}(G,\h)$ is a PI algebra.  By Proposition V.5.4 in \cite{artin}, $A$ is a PI algebra if and only if every localization of $A$ is also a PI algebra.  By the localization lemma (Lemma \ref{loclemma}), $H^\loc_{1,c}(G,\h) = H^\loc_{1,0}(G,\h)$.  Thus, it suffices to show that $H_{1,0}(G,\h)$ is a PI algebra.

Let $Z$ be the center of $H_{1,0}(G,\h)$.  It is easy to see that $Z = (S(\h\oplus \h^*)^p)^G$.  $Z$ is commutative, so we can consider $A' = \Frac(Z) \otimes_Z H_{1,0}(G,\h)$, which is an algebra over the field $\Frac(Z)$.  By Theorem V.8.1 in \cite{artin}, this is a \emph{central simple algebra}, i.e. an algebra that is finite-dimensional, simple, and whose center is exactly its field of coefficients.  By the Artin-Wedderburn theorem, a central simple algebra is isomorphic to a matrix algebra over a division ring.  To calculate the size of these matrices, we would like to determine the dimension of $A'$.
We need to calculate
$$\dim_{\Frac(Z)} A'  = \dim_{\Frac((S(\h\oplus \h^*)^p)^G)}\Frac((S(\h\oplus \h^*)^p)^G) \otimes_{(S(\h\oplus \h^*)^p)^G}H_{1,0}(G,\h) $$
Using the PBW theorem and the fact that for all algebras $B$ and $C$, $B\cong C\otimes _C B$, we get that this is equal to
$$ \dim_{\Frac((S(\h\oplus \h^*)^p)^G)} \Frac((S(\h\oplus \h^*)^p)^G) \otimes_{(S(\h\oplus \h^*)^p)^G} S(\h\oplus \h^*)^p 
 \otimes_{S(\h\oplus \h^*)^p} S(\h\oplus \h^*)\otimes \Bbbk G.$$

Next, we use the following fact: if $G$ acts on some vector space $V$ (in our case, $V=(\h\oplus \h^*)^p$) in such a way that there exists a vector $v\in V$ with a trivial stabilizer, then the $G-$orbit of $v$ is a $\Frac(SV^G)-$ basis for $\Frac(SV^G) \otimes_{SV^G} SV$, and $\dim_{\Frac(SV^G)}\Frac(SV^G) \otimes_{SV^G} SV=|G|$. 

We get that the dimension above is equal to 
$$ \dim_{\Frac((S(\h\oplus \h^*)^p)^G)} \Frac((S(\h\oplus \h^*)^p)^G) \otimes_{(S(\h\oplus \h^*)^p)^G} S(\h\oplus \h^*)^p 
 \otimes_{S(\h\oplus \h^*)^p} S(\h\oplus \h^*)\otimes \Bbbk G=$$
$$=|G|\cdot p^{\dim{S\h\oplus\h^*}} \cdot |G|=|G|^2p^{2n}. $$

So, $A'$ is isomorphic to the matrix algebra over $\Frac((S(\h\oplus \h^*)^p)^G)$ of size $|G|p^{n}$. By Corollary V.8.4 in \cite{artin}, an $r \times r$ matrix algebra satisfies $s_{2r}$, so $A'$ is a PI algebra of degree $p^n|G|$.  Consequently,  $H_{1,0}(G,\h)$ is a PI algebra of degree $p^n|G|$, and so is  $H_{1,c}(G,\h)$.  By Proposition V.6.1(ii) in \cite{artin}, any irreducible representation of a PI algebra of degree $r$ has dimension less than or equal to $r$, and the result follows.
\end{proof}

\begin{cor}\label{lcdim}
Let $h$ be the reduced Hilbert series of $L_{1,c}(\tau)$ for generic $c$. Then $L_{1,c}(\tau)$ has dimension $h(1) p^n$, and $1 \leq h(1) \leq |G|$.
\end{cor}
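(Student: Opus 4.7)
The plan is short, since nearly all the substantive work has been done in Proposition \ref{reducedchar} and the preceding (unlabeled) dimension lemma; the idea is simply to evaluate the Hilbert series at $z = 1$ and then match constant terms.

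First, I would observe that since $L_{1,c}(\tau)$ lies in category $\mathcal{O}$, it is finite-dimensional by Proposition \ref{lc-findim}, so $\mathrm{Hilb}_{L_{1,c}(\tau)}(z)$ is a polynomial and its value at $z=1$ equals $\dim L_{1,c}(\tau)$. By Proposition \ref{reducedchar}, for generic $c$ we have $\mathrm{Hilb}_{L_{1,c}(\tau)}(z) = \left(\frac{1-z^p}{1-z}\right)^n h(z^p)$. Since $\frac{1-z^p}{1-z} = 1 + z + \cdots + z^{p-1}$ takes the value $p$ at $z=1$, substituting gives $\dim L_{1,c}(\tau) = p^n h(1)$, which is the dimension formula.

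For the upper bound, I would invoke the preceding lemma, which asserts that every irreducible $H_{1,c}(G,\h)$-module has dimension at most $p^n |G|$; since $L_{1,c}(\tau)$ is irreducible by Proposition \ref{lctau-irred}, combining this with the dimension formula yields $p^n h(1) \le p^n |G|$, hence $h(1) \le |G|$. For the lower bound, I would compare constant terms in the factorization: the $z^0$ coefficient of $\mathrm{Hilb}_{L_{1,c}(\tau)}(z)$ equals $\dim L_{1,c}(\tau)_0 = \dim \tau \ge 1$, while the $z^0$ coefficient of $\left(\frac{1-z^p}{1-z}\right)^n$ is $1$, so matching forces $h(0) = \dim \tau \ge 1$; since $h$ has nonnegative integer coefficients, $h(1) \ge h(0) \ge 1$.

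There is essentially no obstacle here: the argument is pure bookkeeping once (i) the multiplicative structure of the Hilbert series from Proposition \ref{reducedchar} and (ii) the PI-algebra dimension bound from the preceding lemma are in hand. The only subtleties to keep in mind are that $h$ has nonnegative coefficients (so that $h(1) \ge h(0)$) and that the dimension bound applies because $L_{1,c}(\tau)$ is irreducible.
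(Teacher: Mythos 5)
Your proposal is correct and follows the same route the paper intends: evaluate the factorization $\mathrm{Hilb}_{L_{1,c}(\tau)}(z) = \left(\frac{1-z^p}{1-z}\right)^n h(z^p)$ from Proposition \ref{reducedchar} at $z=1$ to get $\dim L_{1,c}(\tau) = p^n h(1)$, then combine irreducibility of $L_{1,c}(\tau)$ with the PI-algebra bound $\dim \le p^n|G|$ for the upper bound and nonnegativity of the coefficients of $h$ (with $h(0)=\dim\tau\ge 1$) for the lower bound. No gaps.
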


\subsection{Some observations, questions and remarks}

%\begin{conj}\label{conj00}
%Let $h$ be the reduced Hilbert series of $L_{1,c}(\tau)$ for generic $c$. Then  $h(1) \mid |G|$.
%\end{conj}
%In many cases we consider, $h(1) = 1$ or $h(1) = |G|$.  For $G = S_4$ over $\F_3$, we expect $h(t) = \frac{(1 - t^2)^2(1 - t^3)}{(1 - t)^3}$, so $h(1) = 12 \ne |S_4|$.

\begin{rem}\label{conj00}
In many examples we considered, in particular whenever $G=GL_n(\F_q)$ or $G=SL_n(\F_q)$ and $\tau=\mathrm{triv}$, $h(1)$ is equal to $1$ or to $|G|$. In many other cases, it divides $|G|$. However, this is not always true. For $G=GL_2(\F_p)$, $\tau=S^{p-2}\h$, the order of the group is $(p^2-1)(p^2-p)$, and the reduced Hilbert series is $p+(p-1)z+pz^2$. So, $h(1)=3p-2$, which does not always divide  $(p^2-1)(p^2-p)$ (for example, when $p=3$).
\end{rem}

\begin{quest}\label{conj01}
For $h_1(z)=\sum_i a^{(1)}_iz^i$  the reduced Hilbert series of $L_{1,c}(\tau)$ and $h_0(z)=\sum_i a^{(0)}_iz^i$ the Hilbert series of $L_{0,c}(\tau)$, does the inequality  $$a^{(0)}_i \le a^{(1)}_i$$ hold?
 \end{quest}
There is computational data supporting the positive answer. In many examples, particularly for $G=GL_{n}(F_q)$ and $SL_{n}(\F_p)$, the equality $h_{0}= h_1$ holds. An example when strict inequality is achieved is $G = SL_2(\F_3)$, $\tau = \triv$: the reduced Hilbert series is $h_1(z) = (1 + z + z^2 + z^3)(1 + z + z^2 + z^3 + z^{4} + z^{5})$, and the Hilbert series of $L_{0,c}(\tau)$ is $h_0(z)=1$.

Recall that a finite dimensional $\mathbb{Z}_{+}$ graded algebra $A=\oplus_i A_{i}$ is  \emph{Frobenius} if the top degree $A_d$ is one dimensional, and multiplication $A_{i}\otimes A_{d-i}\to A_{d}$  is a non-degenerate pairing. As a consequence, the Hilbert series of $A$ is a palindromic polynomial.

The irreducible module $L_{t,c}(\mathrm{triv})$ is a quotient of $M_{t,c}(\mathrm{triv})\cong S\h^*$ by the $H_{t,c}(G,\h)$ submodule $J_{t,c}(\mathrm{triv})$, which is in particular an $S\h^*$ submodule. So, we can consider it as a quotient of the algebra $S\h^*$ by the left ideal $J_{t,c}(\mathrm{triv})$, and therefore as a finite dimensional graded  commutative algebra.

\begin{prop}\label{frobprop}
Assume that $t,c,\tau$ are such that the top graded piece of $L_{t,c}(\mathrm{triv})$ is one dimensional. Then $L_{t,c}(\mathrm{triv})$ is Frobenius. 
\end{prop}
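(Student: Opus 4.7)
\emph{Plan.} Write $A := L_{t,c}(\mathrm{triv}) \cong S\h^*/J_{t,c}(\mathrm{triv})$, a graded commutative $\Bbbk$-algebra with $A_0\cong\Bbbk$ and, by hypothesis, $A_d$ one-dimensional. I will prove $A$ is Frobenius by showing that its socle
$$\mathrm{soc}(A) := \{\, f\in A : A_+ f = 0 \,\}$$
equals $A_d$, and then deducing nondegeneracy of the pairing $A_i \otimes A_{d-i}\to A_d$ from this. The containment $A_d\subseteq \mathrm{soc}(A)$ is automatic, since $A_j = 0$ for $j > d$; the content is in the reverse inclusion.

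\emph{Main construction.} Suppose $0\ne f\in A_i$ lies in $\mathrm{soc}(A)$; I claim $i=d$. Consider
$$W := \mathrm{span}\bigl\{\, g\cdot D_{y_1}\cdots D_{y_k} f : g\in G,\; k\ge 0,\; y_j\in \h\,\bigr\}\subseteq A.$$
It is clearly $\Bbbk G$-stable, and closure under $S\h\subseteq H_{t,c}(G,\h)$ follows from $D_y g = g D_{g^{-1}\ldot y}$. The key step is closure under $\h^*$-multiplication: for $x\in\h^*$, the identity $xg = g(g^{-1}\ldot x)$ reduces the question to showing $x'\cdot D_{y_1}\cdots D_{y_k} f\in W$ for $x':=g^{-1}\ldot x$, which I prove by induction on $k$. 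The base $k=0$ reads $x' f = 0$ and is the socle hypothesis. For $k\ge 1$, the Cherednik commutator
$$[x',y_1] = -t(y_1,x') + \sum_{s\in S} c_s\bigl((1-s)\ldot x',\, y_1\bigr)\, s \;\in\; \Bbbk\oplus \Bbbk G$$
gives
$$x' D_{y_1}\cdots D_{y_k} f = D_{y_1}\bigl( x' D_{y_2}\cdots D_{y_k} f\bigr) + [x',y_1]\cdot D_{y_2}\cdots D_{y_k} f,$$
and both summands lie in $W$: the first by the inductive hypothesis combined with the already-established $S\h$-closure, the second because its prefactor is a scalar plus a $\Bbbk$-linear combination of elements of $G$.

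\emph{Conclusion.} Thus $W$ is an $H_{t,c}(G,\h)$-submodule containing $f\ne 0$, so the irreducibility of $L_{t,c}(\mathrm{triv})$ (Proposition \ref{lctau-irred}) forces $W=A$. But $W\subseteq \bigoplus_{j\le i} A_j$, since Dunkl operators lower degree and $G$ preserves it; together with $A_d\ne 0$ this forces $i=d$, so $\mathrm{soc}(A)=A_d$. To deduce nondegeneracy of the pairing $A_i\otimes A_{d-i}\to A_d$, I argue by descending induction on $d-i$: if some $0\ne f\in A_i$ with $i<d$ had $f\cdot A_{d-i}=0$, then for every $x\in A_1$ the product $fx\in A_{i+1}$ would satisfy $(fx)\cdot A_{d-i-1}=0$, so by induction $fx=0$, placing $f$ in $\mathrm{soc}(A)=A_d$ and contradicting $i<d$. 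The main technical obstacle is the inductive $\h^*$-closure computation in the middle paragraph; once that is in hand, everything else reduces to the defining commutation relation of $H_{t,c}(G,\h)$ together with irreducibility of $L_{t,c}(\mathrm{triv})$ and a degree count.
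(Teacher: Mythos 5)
Your proof is correct and takes essentially the same approach as the paper: both reduce the Frobenius property to showing that the kernel of multiplication by $A_+$ equals the one-dimensional top graded piece, and both establish this by noting that a homogeneous element killed by $\h^*$ generates an $H_{t,c}(G,\h)$-submodule concentrated in degrees $\le i$, which irreducibility of $L_{t,c}(\mathrm{triv})$ forces to be everything. Your explicit check that $W$ is closed under $\h^*$ via the commutation relations, and your descending induction for nondegeneracy, are just more detailed versions of steps the paper states briefly.
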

\begin{proof}
Let us first prove: a finite dimensional graded commutative algebra $A=\oplus_{i=0}^d A_{i}$ is Frobenius if and only if the kernel on $A$ of the multiplication by $A_+=\oplus_{i>0} A_{i}$ is one dimensional. One implication is clear: if $A$ is Frobenius, the kernel is the one dimensional space $A_d$. For the other, assume the kernel on $A$ of the multiplication by $A_+$ is one dimensional.  The top nontrivial graded piece $A_d$ is always contained in it, so $A_d$ is one dimensional and equal to the kernel. Now assume there exists a nonzero element $a_n\in A_n$ such that multiplication by $a_n$, seen as a map $A_{d-n}\to A_d$, is zero, and let $0<n<d$ be the maximal index for which such an $a_n$ exists. As $a_n$ is not in the kernel of the multiplication by $A_+$, there exists some $b\in A_+$ such that $a_nb\ne 0$. We can assume without loss of generality that $b$ is homogeneous, $b\in A_m$, $0<m<d-n$. Then $a_nb\in A_{n+m}$, with $n<n+m<d$, is a nonzero element such that multiplication by it, seen as a map $A_{d-n-m}\to A_d$, is zero, contrary to the choice of $n$ as the largest such index.

Now assume that $A=L_{t,c}(\mathrm{triv})$ has one dimensional top degree. Let $0\ne f$ be in the kernel of multiplication by $A_+$. As the kernel is graded, assume without loss of generality that $f$ is homogeneous. Then $xf=0\in L_{t,c}(\mathrm{triv})$ for all $x\in \h^*$, so $x$ is a \emph{highest weight vector}. Under the action of $H_{t,c}(G,\h)$, $f$ generates a subrepresentation of $L_{t,c}(\mathrm{triv})$ for which the highest graded piece consists of $G$-translates of $x$. As $L_{t,c}(\mathrm{triv})$ is irreducible, this subrepresentation has to be the entire $L_{t,c}(\mathrm{triv})$, and $f$ is in the top degree, which is by assumption one dimensional.
\end{proof}

\begin{rem}\label{conj02}
In many instances we observed, the algebra $L_{t,c}(\triv)$ is Frobenius for generic $c$ and has palindromic Hilbert series. However, this is not true in general:  let $\Bbbk=\overline{\F}_3$, $G = S_5$ the symmetric group on five letters, $\h$ the four dimensional reflection representation $\{(z_1,\ldots z_5)\in \Bbbk^5 | z_1+\ldots +z_5=0 \}$ with the permutation action, and $\tau = \triv$. Then  the character of $L_{0,c}(\triv)$ is $$(1 + t)(1 + t + t^2)(1 + 2t + 3t^2 + 4t^3).$$ We thank Sheela Devadas and Steven Sam for pointing out this counterexample to us. 
\end{rem}

%%%%%%THE FOLLOWING IS NEVER USED -- MAYBE STICK IT SOMEWHERE ELSE?

%\begin{prop}\label{dunklkilled}
%Suppose that the smallest $G$-subprepresentation of $S^{p-1}(\mf{h}^*)$ has dimension greater than $n^2$.  Then for any $x \in \mf{h}^*$ and $y \in \mf{h}$, $D_y(x^p) = 0$
%\end{prop}
%\begin{proof}
%Let $E$ be the $G$-representation spanned by $x^p$ for all $x \in \mf{h}^*$.  It is a representation, since $g \ldot x^p = (g \ldot x)^p$ in characteristic $p$.  Since $\{x_1^p, \ldots, x_n^p\}$ is a basis, $E$ has dimension $n = \dim_\Bbbk(\h)$.  Consider $\phi: \h \otimes E \rightarrow S^{p-1}(\h^*)$ where $\phi(y \otimes x^p) = D_y(x^p)$; it is a homomorphism of representations since $D_{g \ldot y}(g \ldot x) = g \ldot D_y(x)$.  Further, $\h \otimes E$ has dimension $n^2$, and $\phi$ must map each irreducible subrepresentation by zero or isomorphism to an irreducible subrepresentation of $S^{p-1}(\h^*)$.  Since no irreducible subrepresentation of $S^{p-1}(\h^*)$ has dimension less than or equal to $n^2$, it must map by zero, proving the claim.
%\end{proof}

\subsection{Invariants and characters of baby Verma modules for $G=GL_n(\F_q)$ and $G=SL_n(\F_q)$}\label{babychar}

Remember that $N_{t,c}(\tau)$ is a quotient of $M_{t,c}(\tau)\cong S\h^*\otimes \tau$ by $(S\h^*)_{+}^G\otimes \tau$ if $t=0$ and by  $((S\h^*)^G)_+^p \otimes \tau$ if $t\ne0$. For all groups $G$ for which we know the Hilbert series of the space of invariants $(S\h^*)^G$, we can calculate the character of baby Verma modules easily. An especially nice case is when $(S\h^*)^G$ is a polynomial algebra generated by algebraically independent elements of homogeneous degrees $d_1,\ldots d_n$. In that case, such elements  are called \emph{fundamental invariants}, and $d_i$ are called \emph{degrees} of $G$. 

In \cite{dickson}, Dickson shows that $GL_{n}(\F_q)$ is such a group; the result for $SL_n(\F_q)$ follows easily and is explained in \cite{kemper-malle}. Let us recall the construction of invariants and the calculation of their degrees before calculating the characters of baby Verma modules. 

Remember that for $G=GL_{n}(\F_q)$ and $G=SL_{n}(\F_q)$, the reflection representation $\h$ is the vector representation $\Bbbk^n$, and so $\h_\F=\F_q^n$, $\h_\F^*=\F_q^n$, $\h^*=\Bbbk^n$. Let $x_1,\ldots,x_n$ be the tautological (coordinate) basis for $\h_{\F}^*$. For an ordered $n$-tuple of non-negative integers $e_1, \ldots, e_n$, let $[e_1, \ldots, e_n] \in S\h^*$ be the determinant of the matrix whose entry in the $i$-th row and $j-th$ column is $x_j^{q^{e_i}}$. The action of $G$ on $\h^*$ is dual to the tautological action, so the matrix of $g\in GL_{n}(\F_q)$ in the basis $(x_i)_i$ is $(g^{-1})^{t}$. Taking determinants is a multiplicative map, so a direct calculation gives that for $g\in GL_n(\F_q)$, $$g \ldot [e_1, \ldots, e_n] = (\det(g))^{-1} [e_1, \ldots, e_n].$$ 

Define $$L_n:= [n-1, n-2, \ldots, 1, 0],$$
$$ Q_{i} := [n, n-1, \ldots, i+1, i-1, \ldots, 1, 0] / L_n, \,\,\,\,i=1,\ldots n-1,$$
and $$Q_0=L_n^{q-1}.$$

The paper \cite{dickson} shows that $[n, n-1, \ldots, i+1, i-1, \ldots, 1, 0]$ is divisible by $L_n$, and so $Q_i$ are indeed in $S\h^*$. From the observation that all $[e_1, \ldots, e_n] $ transform as $(\det g)^{-1}$ under the action of $g \in GL_n(\F_q)$, it follows that $Q_i, i=0,\ldots, n-1$ are invariants in $S\h^*$. The main theorem in \cite{dickson} states a stronger claim:

\begin{thm}\label{gln-inv}
Polynomials $Q_{i}$, $i =0, \ldots, n-1$,  form a fundamental system of invariants for $GL_n(\F_q)$ in $S\h^*$, i.e. they are algebraically independent and generate the subalgebra of invariants.
\end{thm}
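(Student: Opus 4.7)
The plan is to realize the $Q_i$ as the nontrivial coefficients of a single universal polynomial identity, use that identity to make $S\h^*$ module-finite over $R := \Bbbk[Q_0, \ldots, Q_{n-1}]$, and then finish by a degree/Hilbert-series comparison. This avoids invoking any modular analogue of Chevalley--Shephard--Todd.

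First I would introduce the auxiliary polynomial
$$P(t) \;:=\; \prod_{v\in \h_{\F}^*}(t-v)\;\in\;(S\h^*)[t].$$
Since $GL_n(\F_q)$ permutes $\h_{\F}^*$, all coefficients of $P(t)$ lie in $(S\h^*)^G$. The root set of $P$ is an $\F_q$-subspace of size $q^n$, so by the standard theory of $q$-polynomials the polynomial $P(t)$ is $\F_q$-linearized, i.e.
$$P(t) \;=\; t^{q^n} + \sum_{i=0}^{n-1} (-1)^{n-i}\,\widetilde{Q}_i\, t^{q^i}$$
for certain homogeneous $\widetilde{Q}_i\in (S\h^*)^G$ of degree $q^n-q^i$. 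One then matches $\widetilde Q_i$ with the $Q_i$ of the statement: expanding the Moore-type Vandermonde determinant $[n,n-1,\ldots,\widehat{i},\ldots,0]$ by rows and comparing with the expansion of $P(t)$ (obtained either inductively by enlarging the $\F_q$-subspace one line at a time, or by the classical Moore-determinant formula for $\prod_v (t-v)$) identifies $\widetilde Q_i$ with $Q_i$ up to a sign. This determinant/coefficient identification is the main technical step, and the place where I expect the real work to be.

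Next, every coordinate $x_j$ lies in $\h_{\F}^*$, so $P(x_j)=0$ in $S\h^*$. This gives each $x_j$ a monic relation of degree $q^n$ over $R=\Bbbk[Q_0,\ldots,Q_{n-1}]$, so $S\h^*$ is module-finite over $R$. Since $S\h^*$ has Krull dimension $n$, the subring $R$ must also have Krull dimension $n$; as $R$ is generated by $n$ elements, these must be algebraically independent. Hence $R$ is a polynomial ring, with Hilbert series $\prod_{i=0}^{n-1}(1-z^{q^n-q^i})^{-1}$, proving the independence half of the theorem.

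For the generation half, the inclusion $R\subseteq S\h^*$ is a module-finite extension of polynomial rings of equal Krull dimension, so $S\h^*$ is free over $R$ (by Cohen--Macaulayness of $S\h^*$ and regularity of $R$), and a Hilbert-series comparison gives its rank:
$$[S\h^*:R] \;=\; \prod_{i=0}^{n-1}\deg(Q_i) \;=\; \prod_{i=0}^{n-1}(q^n-q^i) \;=\; |GL_n(\F_q)|.$$
On the other hand Galois theory gives $[\Frac(S\h^*):\Frac((S\h^*)^G)]=|G|$, and $S\h^*$ is module-finite over $(S\h^*)^G$ by Lemma \ref{hilbertnoether}. Combining these equalities forces $\Frac(R)=\Frac((S\h^*)^G)$. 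Since $R$ is a polynomial algebra and hence integrally closed in its fraction field, while every element of $(S\h^*)^G$ is integral over $R$, we conclude $R=(S\h^*)^G$, completing the proof.
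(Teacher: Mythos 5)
The paper does not actually prove this theorem: it establishes only that the $Q_i$ lie in $S\h^*$ and are $G$-invariant, and then cites Dickson's 1911 paper for the substantive claim of algebraic independence and generation. Your proposal therefore cannot be compared against an argument in the paper; what you have written is a self-contained proof, and it is the standard modern one (essentially the Steinberg--Wilkerson treatment of the Dickson invariants). The route is: realize $P(t)=\prod_{v\in\h_{\F}^*}(t-v)$ as an $\F_q$-linearized polynomial whose nontrivial coefficients are the Dickson invariants, deduce from $P(x_j)=0$ that $S\h^*$ is module-finite over $R=\Bbbk[Q_0,\ldots,Q_{n-1}]$, get algebraic independence from the Krull-dimension count, and then pin down $R=(S\h^*)^G$ by comparing the rank of the free extension $R\subseteq S\h^*$ (via Hilbert series) with the Galois degree $|G|$ and invoking normality of $R$. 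All these steps are sound: the linearized-polynomial fact, the identification of the coefficients with the $Q_i$ via row expansion of the $(n+1)\times(n+1)$ Moore determinant (including $Q_0=L_n^{q-1}$, which comes from $[n,\ldots,1]=L_n^q$), the module-finiteness and dimension argument, the miracle-flatness/Auslander--Buchsbaum step giving freeness, the rank computation $\prod_i(q^n-q^i)=|GL_n(\F_q)|$, the identity $\mathrm{Frac}((S\h^*)^G)=\mathrm{Frac}(S\h^*)^G$, and the final integral-closure step. You flag the Moore-determinant coefficient match as the technical heart and leave it sketched; that is the one place you would need to fill in calculations to make this fully rigorous, but the outline is correct and the identity is classical. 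In short: the paper outsources the proof to Dickson, while your argument supplies the modern commutative-algebra proof, which is cleaner to verify and avoids Dickson's original, more computational approach.
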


A comment in Section 3 of \cite{kemper-malle} gives us the following corollary.
\begin{cor}
Polynomials $L_n$ and $Q_{i}$ for $i = 1, \ldots, n-1$ form a fundamental system of invariants for $SL_n(\F_q)$.
\end{cor}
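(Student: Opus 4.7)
The plan is to verify three things: (i) that $L_n$ and $Q_1, \ldots, Q_{n-1}$ lie in $(S\h^*)^{SL_n(\F_q)}$, (ii) that they are algebraically independent, and (iii) that they generate all of $(S\h^*)^{SL_n(\F_q)}$. The first two will follow almost for free from what the excerpt already supplies; the third requires a short rank/degree argument built on Theorem \ref{gln-inv}.

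For (i), the transformation rule $g\ldot [e_1,\ldots,e_n] = (\det g)^{-1}[e_1,\ldots,e_n]$ established just before Theorem \ref{gln-inv} immediately shows that every bracket is fixed by any $g$ with $\det g = 1$; in particular, $L_n \in (S\h^*)^{SL_n(\F_q)}$, and each $Q_i$ is already $GL_n(\F_q)$-invariant, hence $SL_n(\F_q)$-invariant. For (ii), I would argue that any polynomial relation $P(L_n, Q_1, \ldots, Q_{n-1}) = 0$ can be rewritten, after collecting monomials in $L_n$ by exponent modulo $q-1$ and substituting $L_n^{q-1} = Q_0$, as a polynomial relation among $Q_0, Q_1, \ldots, Q_{n-1}$, contradicting Theorem \ref{gln-inv}.

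The substantive step is (iii). Set $A := \Bbbk[L_n, Q_1, \ldots, Q_{n-1}]$, a polynomial subring of $(S\h^*)^{SL_n(\F_q)}$. Granting the positive-characteristic Chevalley--Shephard--Todd theorem together with Theorem \ref{gln-inv}, the ring $S\h^*$ is free of rank $|GL_n(\F_q)|$ over $(S\h^*)^{GL_n(\F_q)}$. Since $\{1, L_n, \ldots, L_n^{q-2}\}$ is an $(S\h^*)^{GL_n(\F_q)}$-basis of $A$ (using $L_n^{q-1} = Q_0$), the ring $A$ is free of rank $q-1$ over $(S\h^*)^{GL_n(\F_q)}$, so $S\h^*$ is a free $A$-module of rank $|GL_n(\F_q)|/(q-1) = |SL_n(\F_q)|$. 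Separately, Galois theory applied to the faithful action of $SL_n(\F_q)$ on $\Frac(S\h^*)$ gives $[\Frac(S\h^*) : \Frac((S\h^*)^{SL_n(\F_q)})] = |SL_n(\F_q)|$, so matching degrees forces $\Frac(A) = \Frac((S\h^*)^{SL_n(\F_q)})$.

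To close, I would observe that $(S\h^*)^{SL_n(\F_q)}$ is integral over $A$ (since it sits inside $S\h^*$, which is integral over $A$) and lies inside $\Frac(A)$; because the polynomial ring $A$ is integrally closed, this forces $(S\h^*)^{SL_n(\F_q)} \subseteq A$, and combined with the reverse inclusion gives equality. The main ingredient I am drawing on is the modular Chevalley--Shephard--Todd freeness statement for $S\h^*$ over $(S\h^*)^{GL_n(\F_q)}$; once that is accepted, the rank comparison and the final integral-closure argument are essentially formal.
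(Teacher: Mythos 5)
The paper gives no proof of this corollary — it merely cites a remark in Kemper--Malle — so what you have written is genuinely additional content rather than a retelling. Your strategy is sound and the conclusion follows; here is a brief assessment and one small gap.

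Steps (i) and (iii) work as written. For (iii), your chain of containments $\Bbbk[Q_0,\ldots,Q_{n-1}]\subset A:=\Bbbk[L_n,Q_1,\ldots,Q_{n-1}]\subset (S\h^*)^{SL_n}\subset S\h^*$, the rank/Galois degree comparison yielding $\Frac(A)=\Frac\bigl((S\h^*)^{SL_n}\bigr)$, and the final integral-closure argument (a polynomial ring is a UFD, hence normal) together give $(S\h^*)^{SL_n}=A$. One remark: the assertion that $S\h^*$ is free over $A$ of rank $|SL_n(\F_q)|$ is true (miracle flatness: $A$ is regular since $L_n,Q_1,\ldots,Q_{n-1}$ are algebraically independent, $S\h^*$ is Cohen--Macaulay, and the extension is finite), but it is stated without justification. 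It can also be bypassed entirely: you only need the field-degree $[\Frac(S\h^*):\Frac(A)]$, and since $A$ is free of rank $q-1$ over $\Bbbk[Q_0,\ldots,Q_{n-1}]$ one gets $[\Frac(A):\Frac(\Bbbk[Q_0,\ldots,Q_{n-1}])]=q-1$ directly, then divides.

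The one genuine gap is in (ii). After collecting $P$ by residue of the $L_n$-exponent mod $q-1$ you obtain $\sum_{j=0}^{q-2}L_n^j\,P_j(Q_0,\ldots,Q_{n-1})=0$, but this is \emph{not yet} a polynomial relation among $Q_0,\ldots,Q_{n-1}$: $L_n$ still appears. To conclude each $P_j=0$ you need to know that $1,L_n,\ldots,L_n^{q-2}$ are linearly independent over $\Frac(\Bbbk[Q_0,\ldots,Q_{n-1}])$, i.e.\ that $T^{q-1}-Q_0$ is the minimal polynomial of $L_n$ — which is a nontrivial irreducibility fact you would have to supply. A cleaner fix is to drop that route entirely and argue by transcendence degree: $\Bbbk(L_n,Q_1,\ldots,Q_{n-1})\supseteq\Bbbk(Q_0,Q_1,\ldots,Q_{n-1})$, the latter has transcendence degree $n$ over $\Bbbk$ by Theorem~\ref{gln-inv}, and a field generated by $n$ elements cannot have transcendence degree exceeding $n$; hence the $n$ generators $L_n,Q_1,\ldots,Q_{n-1}$ are algebraically independent. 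Finally, note the Kemper--Malle-style argument the paper alludes to is shorter: one checks that $\deg L_n\cdot\prod_{i=1}^{n-1}\deg Q_i=|SL_n(\F_q)|$ and invokes the standard criterion that $n$ algebraically independent homogeneous invariants whose degree product equals the group order automatically form a fundamental system. Your derivation is a self-contained unwinding of essentially the same underlying facts, routed through the $GL_n$ invariant ring.
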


The degrees of these invariants are:
$$\deg L_n=1+q+\ldots +q^{n-1}$$
$$\deg Q_0=(q-1)\deg L_n= (q-1)(1+q+\ldots +q^{n-1})=q^n-1$$
$$\deg Q_i=(1+q+\ldots +q^n -q^i )-(1+q+\ldots +q^{n-1})=q^n-q^i.$$

From this, we can calculate the characters of the baby Verma modules for these groups. 

\begin{cor}
For $G = GL_n(\F_q)$, the characters of the baby Verma modules are
$$\chi_{N_{0,c}(\tau)}(z)=\chi_{M_{0,c}(\tau)}(z) \prod_{n=0}^{n-1} (1 - z^{q^n - q^i}),$$
$$\chi_{N_{t,c}(\tau)}(z)=\chi_{M_{t,c}(\tau)}(z) \prod_{n=0}^{n-1} (1 - z^{p(q^n - q^i)}), \,\,\,\, t\ne 0.$$

For $G = SL_n(\F_q)$, the characters of the baby Verma modules are
$$\chi_{N_{0,c}(\tau)}(z)=\chi_{M_{0,c}(\tau)}(z)(1-z^{1+q+\ldots +q^{n-1}}) \prod_{n=1}^{n-1} (1 - z^{q^n - q^i}),$$
$$\chi_{N_{t,c}(\tau)}(z)=\chi_{M_{t,c}(\tau)}(z) (1-z^{p(1+q+\ldots +q^{n-1})}) \prod_{n=1}^{n-1} (1 - z^{p(q^n - q^i)}), \,\,\,\, t\ne 0.$$

\end{cor}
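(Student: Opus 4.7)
The plan is to apply the general character formula for baby Verma modules recorded at the start of Section \ref{characters} to the specific fundamental invariants of $GL_n(\F_q)$ and $SL_n(\F_q)$ from Theorem \ref{gln-inv} and its subsequent corollary. That general formula asserts: whenever $(S\h^*)^G$ is a polynomial algebra on homogeneous generators of degrees $d_1, \ldots, d_n$, one has $\chi_{N_{0,c}(\tau)}(z) = \chi_{M_{0,c}(\tau)}(z) \prod_{i=1}^n (1-z^{d_i})$ at $t=0$ and $\chi_{N_{1,c}(\tau)}(z) = \chi_{M_{1,c}(\tau)}(z) \prod_{i=1}^n (1-z^{pd_i})$ at $t=1$. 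The case of general $t \ne 0$ reduces to $t=1$ by the rescaling isomorphism $H_{t,c} \cong H_{1,c/t}$ from Section \ref{cherednik}, which identifies the corresponding baby Verma modules as graded $G$-modules.

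For $G = GL_n(\F_q)$, Theorem \ref{gln-inv} gives $\{Q_0, Q_1, \ldots, Q_{n-1}\}$ as a fundamental system of invariants, and the degree computation just before the corollary yields $\deg Q_i = q^n - q^i$ for $i = 0, 1, \ldots, n-1$. Substituting $d_i = q^n - q^i$ into the general formula and letting $i$ range over $0, \ldots, n-1$ produces the two claimed identities for $\chi_{N_{0,c}(\tau)}$ and $\chi_{N_{t,c}(\tau)}$ in the $GL_n$ case. For $G = SL_n(\F_q)$, the corollary to Theorem \ref{gln-inv} gives $\{L_n, Q_1, \ldots, Q_{n-1}\}$ as a fundamental system with $\deg L_n = 1 + q + \cdots + q^{n-1}$ and $\deg Q_i = q^n - q^i$ for $i = 1, \ldots, n-1$; substituting these degrees separates the factor $(1-z^{1+q+\cdots+q^{n-1}})$ (respectively $(1-z^{p(1+q+\cdots+q^{n-1})})$) coming from $L_n$ and leaves the product over $i = 1, \ldots, n-1$ coming from the $Q_i$, which is exactly the stated $SL_n$ formula.

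There is no real obstacle: the corollary is a direct substitution into a formula already in hand. The only point worth flagging for the $t \ne 0$ half is that in characteristic $p$, if $(S\h^*)^G = \Bbbk[f_1, \ldots, f_n]$, then $((S\h^*)^G)^p = \Bbbk[f_1^p, \ldots, f_n^p]$ is again a polynomial algebra whose fundamental generators have degrees $pd_i$, which is what lets one quotient $M_{1,c}(\tau)$ by $((S\h^*)^G)^p_+ M_{1,c}(\tau)$ and pick up precisely the factors $(1-z^{pd_i})$ in the character.
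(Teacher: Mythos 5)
Your proposal is correct and is exactly the argument the paper intends: the corollary is a direct substitution of the Dickson degrees $\deg Q_i = q^n - q^i$ (and $\deg L_n = 1+q+\cdots+q^{n-1}$ for $SL_n$) into the general product formula for $\chi_{N_{t,c}(\tau)}$ stated at the start of Section \ref{characters}, together with the rescaling reduction of $t\ne 0$ to $t=1$. Your added observation that $((S\h^*)^G)^p$ is again a polynomial algebra on the $p$-th powers of the fundamental invariants, with degrees $pd_i$, is a worthwhile explicit justification of the $t\ne 0$ case that the paper leaves implicit.
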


\section{Description of $L_{t,c}(\triv)$ for $GL_n(\F_{p^r})$}\label{glnsec}

\subsection{$GL_n(\F_{p^r})$ as a reflection group}

For this section, let us fix $G = GL_n(\F_q)$, for $q = p^r$ a prime power and $n > 1$. It acts tautologically on $\h_{\F}=\F_q^n$, which has a coordinate basis $y_1,\ldots ,y_n$, and by the dual representation on $\h_{\F}^*$, which has a dual basis $x_1,\ldots ,x_n$. The underlying field is $\Bbbk = \overline{\F}_q$, and the reflection representation is $\h=\h_{\F}\otimes_{\F_q}\Bbbk=\Bbbk^n$. We will study $L_{t,c}(\tau)$ for all $t,c$ and the trivial representation $\tau=\Bbbk$, where every element of $G$ acts as identity.

The group $GL_n(\F_q)$ is indeed a reflection group, generated by all the conjugates of elements $d_{\lambda}$ for $\lambda \in \F_{q}^{\times}$ (for definition of $d_{\lambda}$, see Section \ref{notation}). The order of $GL_n(\F_q)$ is $\prod_{i=0}^{n-1} (q^n - q^i)$, which is divisible by the characteristic $p$ of $\Bbbk$. It is an example of a reflection group in characteristic $p$ which has no counterpart in characteristic zero.

\begin{lemma}\label{conjclassGLn}
Reflections in $GL_n(\F_q)$ are elements that are conjugate in $GL_n(\F_q)$ to one of the  $d_\lambda$, $\lambda \in \F_{q}^{\times}$. There are $q - 1$ conjugacy classes of reflections in $GL_n(\F_q)$, with representatives  $d_\lambda$. Each semisimple conjugacy class consists of $\frac{(q^n - 1)q^{n-1}}{q-1}$ reflections.  The unipotent conjugacy class (elements conjugate to $d_1$) consists of $\frac{(q^n - 1)(q^{n-1} - 1)}{(q - 1)}$ reflections.
\end{lemma}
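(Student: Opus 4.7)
The plan is to reduce everything to Proposition \ref{reflequiv}, which packages a reflection $s$ as a nonzero tensor $\alpha \otimes \alpha^\vee \in \h_{\F}^* \otimes \h_{\F}$ with $(\alpha^\vee,\alpha) \ne 1$, and then to analyze the $GL_n(\F_q)$-action on such tensors. Since $1 - gsg^{-1} = g(1-s)g^{-1}$, conjugation of reflections corresponds to the diagonal action $\alpha \otimes \alpha^\vee \mapsto (g\ldot\alpha)\otimes(g\ldot\alpha^\vee)$. Because the pairing $\h^* \otimes \h \to \Bbbk$ is $G$-invariant, the scalar $v := (\alpha^\vee,\alpha)$, and hence the non-trivial eigenvalue $\lambda_s = 1-v$, is a conjugation invariant. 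Since $v$ ranges over $\F_q \setminus \{1\}$, equivalently $\lambda_s$ ranges over $\F_q^\times$, there are at most $q-1$ conjugacy classes, and each $d_\lambda$ realizes the value $\lambda_s = \lambda$, so each class is nonempty with representative $d_\lambda$.

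Next I would prove transitivity: $GL_n(\F_q)$ acts transitively on $\{\alpha \otimes \alpha^\vee \ne 0 : (\alpha^\vee,\alpha) = v\}$ for each $v \ne 1$. Given two such tensors, first use transitivity of $GL_n(\F_q)$ on nonzero vectors of $\h_{\F}^*$ to reduce to the case $\alpha_1 = \alpha_2 =: \alpha$; by $G$-invariance of the pairing, the vectors $\alpha_1^\vee$ and $\alpha_2^\vee$ then lie in the same level set $\{\beta \in \h_{\F} : (\beta,\alpha) = v\}$. For $v \ne 0$ this is an affine hyperplane not through the origin, and for $v = 0$ it is a linear hyperplane with the zero vector removed; in both cases the stabilizer of $\alpha$ (a maximal parabolic in $GL_n(\F_q)$) acts transitively on this set by standard linear algebra, possibly after a compensating scalar rescaling $(\alpha^\vee,\alpha)\sim (c\alpha, c^{-1}\alpha^\vee)$ that preserves the tensor. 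This confirms that the $q-1$ level sets are exactly the $q-1$ conjugacy classes.

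For the cardinalities I would count ordered pairs $(\alpha,\alpha^\vee) \in (\h_{\F}^* \setminus 0) \times (\h_{\F} \setminus 0)$ with prescribed $v$ and divide by the $q-1$-fold scaling redundancy. Fixing $\alpha$ gives $q^n - 1$ choices; the condition $(\alpha^\vee,\alpha) = v$ cuts out an affine subspace of $\h_{\F}$ of dimension $n-1$. When $v \ne 0$ this affine hyperplane avoids $0$ and contributes $q^{n-1}$ choices, yielding
\[
\frac{(q^n-1)\,q^{n-1}}{q-1}
\]
tensors in each semisimple class; when $v = 0$ we must discard the zero vector, giving $q^{n-1}-1$ choices and
\[
\frac{(q^n-1)(q^{n-1}-1)}{q-1}
\]
unipotent reflections.

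The main obstacle is the transitivity step in paragraph two: the partition of tensors by the invariant $v$ is immediate, but verifying that the parabolic stabilizer of a nonzero vector acts transitively on each affine level set of the pairing requires a short check that I would do by picking explicit coordinates (sending $\alpha$ to $x_1$), after which the stabilizer becomes block upper-triangular and the transitivity becomes a visible computation. Everything else is a clean bookkeeping of the bijection in Proposition \ref{reflequiv} together with elementary counting.
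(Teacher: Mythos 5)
Your proposal is correct, and it takes a genuinely different route from the paper. The paper identifies the classes by eigenvalue/Jordan-type considerations and then computes each class size by orbit--stabilizer: the centralizer of $d_\lambda$ ($\lambda\ne 1$) is $GL_1(\F_q)\times GL_{n-1}(\F_q)$, while the centralizer of $d_1$ is written down as an explicit matrix group of order $q^{2n-3}(q-1)|GL_{n-2}(\F_q)|$, and the class sizes are the index of these centralizers. You instead push everything through the tensor parametrization of Proposition \ref{reflequiv}: conjugation becomes the diagonal action on rank-one tensors, the scalar $v=(\alpha^\vee,\alpha)$ is a complete invariant once transitivity on each level set is checked, and the class sizes fall out of counting ordered pairs $(\alpha,\alpha^\vee)$ on a level set and dividing by the $(q-1)$-fold rescaling ambiguity $(\alpha,\alpha^\vee)\sim(c\alpha,c^{-1}\alpha^\vee)$. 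Your bookkeeping is right: for $v\ne 0$ the level set of $\alpha^\vee$ is an affine hyperplane missing the origin ($q^{n-1}$ points), and for $v=0$ it is a linear hyperplane minus the origin ($q^{n-1}-1$ points), which reproduces both formulas uniformly. What your approach buys is exactly this uniformity --- the semisimple and unipotent counts differ only in whether the hyperplane passes through $0$ --- and it avoids the somewhat fiddly identification of the centralizer of $d_1$; it also stays in the same coordinates ($\alpha\otimes\alpha^\vee$ with a distinguished first basis vector) that the paper uses repeatedly in the later Dunkl-operator computations. What it costs is the transitivity lemma for the stabilizer of a covector acting on level sets, which the paper sidesteps by invoking standard conjugacy of semisimple elements with equal eigenvalues and of transvections; your coordinate sketch for that step (first row of the stabilizer equal to $(1,0,\dots,0)$, then act by the affine/linear group of the hyperplane, using $n\ge 2$) is the right check and closes the argument.
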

\begin{proof}

Every semisimple reflection in $GL_n(\F_q)$ is diagonalizable, with eigenvalue $1$ of multiplicity $n-1$ and eigenvalue $\lambda^{-1}$ of multiplicity $1$,  for some $\lambda\ne 0,\ne 1$. Such a reflection is conjugate in $GL_n(\F_q)$ to $d_\lambda$. For $\lambda \ne \mu$, $d_{\lambda}$ is not conjugate to $d_{\mu}$. Thus, to every $\lambda \in \F_q, \lambda \ne 0,\ne 1$, we associate a conjugacy class $C_{\lambda}$ containing it. The centralizer of $d_{\lambda}$ is $GL_1(\F_q) \times GL_{n-1}(\F_q)\subseteq GL_n(\F_q)$, so the number of reflections in the conjugacy class $C_{\lambda}$ is $|GL_n(\F_q)|/|GL_{1}(\F_q) \times GL_{n-1}(\F_q)| = \frac{(q^n - 1)q^{n-1}}{q-1}$.

The unipotent conjugacy class $C_1$ is the orbit of $d_{1}$, which is centralized by any element of the form:
\begin{equation*}
\left[ \begin{array}{cccc}
a & b & \cdots & c \\
0 & a & \cdots & 0 \\
\vdots & \vdots & * & * \\
0 & f & * & *
\end{array} \right]
\end{equation*}
Here, $a \not= 0$, the rest of the variables are arbitrary, and the bottom right $(n-1)\times (n-2)$ submatrix is invertible.  The  order of the centralizer is $q^{2n-3}(q-1)|GL_{n-2}(\F_p)|$, and the order of the conjugacy class is $\frac{(q^n - 1)(q^{n-1} - 1)}{(q - 1)}$.
\end{proof}

Let $C_\lambda$ be the conjugacy class containing $d_{\lambda}$, let $c_{\lambda}$ be the value of $c$ on $C_{\lambda}$. Reflection $s\in C_{\lambda}$ has (generalized, if $\lambda=1$) eigenvalues $\lambda,1,\ldots, 1$ on $\h^*$ and  $\lambda^{-1},1,\ldots, 1$ on $\h$.

\begin{example}\label{n2refl}If $n=2$, the parametrization of $C_{\lambda}$ by $\alpha\otimes \alpha^{\vee}\in \h^*\otimes \h$ described in Lemma \ref{reflequiv} is as follows:
$$\lambda\ne 1:\,\,\,\, C_{\lambda}\leftrightarrow \{\left[\begin{array}{c} 1 \\ b \end{array} \right] \otimes \left[\begin{array}{c} 1-\lambda -bd \\ d \end{array} \right]  | b,d\in \F_q  \} \cup \{ \left[\begin{array}{c} 0 \\ 1 \end{array} \right] \otimes \left[\begin{array}{c} a \\ 1-\lambda \end{array} \right]  | a \in \F_q \} $$
$$\lambda= 1:\,\,\,\, C_{1} \leftrightarrow\{\left[\begin{array}{c} 1 \\ b \end{array} \right] \otimes \left[\begin{array}{c} -bd \\ d \end{array} \right]  | b,d\in \F_q, d\ne 0  \} \cup \{ \left[\begin{array}{c} 0 \\ 1 \end{array} \right] \otimes \left[\begin{array}{c} a \\ 0 \end{array} \right]  | a \in \F_q, a\ne 0 \}.$$
\end{example}

%We now begin studying $L_{t,c}(\mathrm{triv})$.

\subsection{Irreducible modules with trivial lowest weight for $GL_{n}(\F_q)$ at $t=0$}

\begin{thm}\label{glnt0}The characters of the irreducible modules $L_{0,c}(\mathrm{triv})$ for the rational Cherednik algebra $H_{0,c}(GL_n(\F_q),\h)$ are:\\

\begin{tabular}{|c|c|c|c|}
\hline
$(q,n)$ & $c$ & $\chi_{L_{0,c}(\mathrm{triv})}(z)$ & $\mathrm{Hilb}_{L_{0,c}(\mathrm{triv})}(z)$ \\ \hline \hline
$(q,n)\ne (2,2)$ & any & $[\triv]$ & $1$ \\ \hline
$(2,2)$ & $0$ & $[\triv]$ & $1$ \\ \hline
$(2,2)$ & $c\ne0$ & $[\triv]+[\h^*]z+([S^2\h^*]-[\triv])z^2+$ & $1+2z+2z^2+z^3$\\ 
& & $+([S^3\h^*]-[\h^*]-[\triv])z^3$ & \\ \hline
\end{tabular}
\end{thm}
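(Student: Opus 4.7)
My plan is to reduce the generic case $(q,n) \ne (2,2)$ to showing that every $x \in \h^*$ is a singular vector in $M_{0,c}(\triv)$. Once established, the $H_{0,c}(G,\h)$-submodule generated by $\h^*$ coincides with the $S\h^*$-submodule $(S\h^*)_+$, and this is closed under all Dunkl operators since $D_y$ lowers degree by one and is assumed to annihilate its only degree-one piece $\h^*$. Therefore $J_{0,c}(\triv) \supseteq (S\h^*)_+$, so $L_{0,c}(\triv) \cong \triv$ with character $[\triv]$.

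At $t = 0$ with $\tau = \triv$, the formula for $D_y$ gives $D_y x = -\sum_{s \in S} c_s (y, \alpha_s)(\alpha_s^\vee, x) \in M_{0,c}(\triv)_0 = \Bbbk$. Grouping the sum by conjugacy class $C_\lambda$ and setting $T_\lambda := \sum_{s \in C_\lambda} \alpha_s \otimes \alpha_s^\vee \in \h^* \otimes \h \cong \End(\h^*)$, the element $T_\lambda$ is $G$-invariant. The tautological $\Bbbk GL_n(\F_q)$-module $\h^*$ is absolutely irreducible, so Schur's lemma forces $T_\lambda = a_\lambda \cdot \mathrm{id}$ for some $a_\lambda \in \Bbbk$; the problem reduces to proving $a_\lambda = 0$ for each $\lambda \in \F_q^\times$.

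For this I use the parametrization from Proposition~\ref{reflequiv}: writing $\alpha = \sum_i a_i x_i$ and $\alpha^\vee = \sum_j b_j y_j$, and observing that pairs with $\alpha = 0$ or $\alpha^\vee = 0$ contribute trivially to $T_\lambda$, the coefficient of $x_i \otimes y_j$ in $(q-1) \cdot T_\lambda$ becomes $F_{ij} = \sum_{(a,b):\,\sum_k a_k b_k = 1-\lambda} a_i b_j$. Using the $\F_q$-identity $\mathbf{1}_{t = \mu} = 1 - (t - \mu)^{q-1}$ and Lemma~\ref{ntuplesum} (to discard the leftover constant term), $F_{ij} = -\sum_{a,b \in \F_q^n} a_i b_j \bigl(\sum_k a_k b_k - (1-\lambda)\bigr)^{q-1}$. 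Expanding binomially, each resulting monomial has total $a$-degree at most $q$; for a contribution to survive Lemma~\ref{ntuplesum}, every $a_l$ (and every $b_l$) must occur to degree at least $q-1$, forcing $n(q-1) \leq q$, i.e., $q(n-1) \leq n$. For $n, q \geq 2$ this has the unique solution $(q,n) = (2,2)$, so $F_{ij} = 0$ and $a_\lambda = 0$ whenever $(q,n) \ne (2,2)$.

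For the exceptional case $(q,n) = (2,2)$: at $c = 0$ all Dunkl operators vanish, giving $L_{0,0}(\triv) \cong \triv$ at once. For $c \ne 0$ the same summation over the three unipotent reflections in $GL_2(\F_2) \cong S_3$ yields $T_1 = x_1 \otimes y_1 + x_2 \otimes y_2 = \mathrm{id}$, so $a_1 = 1$ and $\h^* \not\subseteq \Ker B$. To pin down the character I would show $L_{0,c}(\triv) = N_{0,c}(\triv)$; by Subsection~\ref{babychar} and the degrees $2, 3$ of the fundamental $GL_2(\F_2)$-invariants, $N_{0,c}(\triv)$ has Hilbert series $(1-z^2)(1-z^3)/(1-z)^2 = 1 + 2z + 2z^2 + z^3$, and the $G$-module decomposition of the coinvariant algebra produces the stated character summands. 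Irreducibility of $N_{0,c}(\triv)$ follows from a short direct check that the form $B$ is non-degenerate on each of the four graded pieces of $N_{0,c}(\triv)$. The main technical obstacle is the degree count in paragraph three: it pinpoints exactly where the Schur-theoretic vanishing fails, and the exceptional $(2,2)$ situation is then handled by hand.
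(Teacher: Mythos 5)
Your proposal is correct and follows the same overall strategy as the paper: reduce the case $(q,n)\ne(2,2)$ to showing every $x\in\h^*$ is singular in $M_{0,c}(\triv)$, and settle $(2,2)$ by hand. The difference lies in how the vanishing of $\sum_{s\in C_\lambda}\alpha_s\otimes\alpha_s^\vee$ is established. The paper fixes $\alpha$, changes basis so that $\alpha$ is the first coordinate vector, and evaluates the inner sum over $\alpha^\vee=(1-\lambda,a_2,\ldots,a_n)$ explicitly, reading off that it vanishes except when $(n,q)=(2,2)$. You instead pass from tensors to pairs (picking up the harmless factor $q-1\equiv-1$), absorb the constraint $(\alpha,\alpha^\vee)=1-\lambda$ into the summand via the identity $\mathbf{1}_{t=\mu}=1-(t-\mu)^{q-1}$, and then kill every monomial by the degree bound of Lemma \ref{ntuplesum}: survival would require total $a$-degree at least $n(q-1)$ while the expansion only produces degree at most $q$, so $q(n-1)\le n$ and $(q,n)=(2,2)$ is the unique exception. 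This is a genuinely different, and arguably more uniform, execution of the key computation — it treats $\lambda=1$ and $\lambda\ne1$ identically and makes the exceptional case fall out of a single inequality rather than a case analysis. Two minor remarks: the Schur's lemma reduction to a scalar $a_\lambda$ is superfluous once you compute all entries $F_{ij}$ directly (it would only save work if you computed a single entry or the trace, and the trace argument alone would not suffice when $p\mid n$); and for the $(2,2)$, $c\ne0$ case your outline — identifying $L_{0,c}(\triv)$ with $N_{0,c}(\triv)$ of Hilbert series $(1-z^2)(1-z^3)/(1-z)^2$ and verifying nondegeneracy of $B$ on its four graded pieces — is exactly the direct computation the paper also defers to, so nothing is missing there.
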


\begin{proof}
We claim that when $(n,q)\ne(2,2)$, all the vectors $x\in \h^*\otimes \mathrm{triv}\cong M_{0,c}(\mathrm{triv})_1$ are singular. To see that, remember that the Dunkl operator associated to $y\in \h$ is 
$$D_{y}= t\partial_y \otimes 1 - \sum_{s \in S} c_s \frac{(y, \alpha_s)}{\alpha_s} (1 - s) \otimes s,$$ which for $t=0$ and $\tau=\mathrm{triv}$ becomes the operator
$$D_{y}=- \sum_{s \in S} c_s \frac{(y, \alpha_s)}{\alpha_s} (1 - s)$$ on $M_{0,c}(\mathrm{triv})\cong S\h^*$. We claim that for all values of the parameter $c$, for all $x\in \h^*$ and for all $y\in h$, the value $D_y(x)=0$.  To see that, calculate the coefficient of $c_{\lambda}$ in $D_y(x)$. Using the parametrization of conjugacy classes from Proposition \ref{reflequiv}, this coefficient is equal to
$$-\sum_{s\in C_{\lambda}}\frac{(y, \alpha_s)}{\alpha_s} (1 - s)\ldot x= -\sum_{\substack{\alpha\otimes \alpha^{\vee}\ne 0 \\ (\alpha,\alpha^{\vee})=1-\lambda}} (y, \alpha)(\alpha^{\vee}, x)=-\sum_{\alpha}(\alpha,y)\left( x,\sum_{(\alpha,\alpha^{\vee})=1-\lambda}\alpha^{\vee} \right).$$ We claim that for fixed $\alpha \in \h^*$, the sum $\sum_{(\alpha,\alpha^{\vee})=1-\lambda}\alpha^{\vee}$ is zero. Fix $\alpha$ and let us change the coordinates so that $\alpha$ is the first element of some new ordered basis. Write the sum $\sum_{(\alpha,\alpha^{\vee})=1-\lambda}\alpha^{\vee}$ in the dual of this basis. The set of all nonzero $\alpha^{\vee}$ such that $(\alpha,\alpha^{\vee})=1-\lambda$ written in these new coordinates then consists of all $\alpha^{\vee}=(1-\lambda,a_2,\ldots, a_n)\ne 0$, for $a_i\in \F_q$. If $\lambda\ne 1$, the sum of all such $\alpha^{\vee}$ is the sum over all $a_2,\ldots , a_n\in \F_q$, so the sum is zero. If  $\lambda=1$, the first coordinate is always zero, so the sum is over all $a_i\in \F_q$ which are not all simultaneously $0$. However, adding $(0,\ldots ,0)$ doesn't change the sum, which is then equal to $$\sum_{a_2,\ldots , a_n\in \F_q}(0,a_2,\ldots a_n)=(0, q^{n-2}\sum_{a\in \F_q}a ,\ldots, q^{n-2}\sum_{a\in \F_q}a ).$$ This is equal to $0$, as claimed, unless $n=2$ and $q=2$.

In case $(n,q)=(2,2)$, direct computation of matrices of the bilinear form $B$ implies the claim. More precisely, if $c=0$ then all $D_y=0$, the form $B$ is zero in degree one, and the module $L_{0,0}(\mathrm{triv})$ is one dimensional. If $c\ne 0$, then the only vectors in the kernels of matrices $B_i$ are the invariants in degrees $2$ and $3$ and all their $S\h^*$ multiples. This implies that $L_{0,c}(\mathrm{triv})=N_{0,c}(\mathrm{triv})$, and gives the character formula from the statement of the theorem.

\end{proof}

\subsection{Irreducible modules with trivial lowest weight for $GL_{n}(\F_q)$ at $t\ne0$}

For the rest of this section we assume that $t=1$. As the parameters $t$ and $c$ can be simultaneously rescaled, the results we obtain for $t=1$ hold (after rescaling $c$ by $1/t$)  for all $t\ne 0$.

\begin{prop}\label{gln-kernel}
Suppose $(q, n) \not= (2, 2)$.  For any $x \in \h^*$, the vector $x^p$ is singular in $M_{1,c}(\mathrm{triv})$.
\end{prop}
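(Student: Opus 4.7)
My plan is to compute $D_y(x^p)$ term-by-term and prove it vanishes after grouping reflections by conjugacy class. On $M_{1,c}(\triv) \cong S\h^*$ with trivial $G$-action on $\triv$, $D_y = \partial_y - \sum_s c_s \frac{(y,\alpha_s)}{\alpha_s}(1-s)$; the first piece $\partial_y(x^p) = 0$ vanishes in characteristic $p$. For each reflection $s$, Frobenius additivity combined with $s \ldot x = x - (\alpha_s^\vee, x)\alpha_s$ gives $(1-s)(x^p) = x^p - (s\ldot x)^p = (\alpha_s^\vee, x)^p \alpha_s^p$, which is cleanly divisible by $\alpha_s$. Hence
\[
D_y(x^p) = -\sum_{\lambda \in \F_q^\times} c_\lambda T_\lambda, \qquad T_\lambda := \sum_{s \in C_\lambda} (y, \alpha_s)(\alpha_s^\vee, x)^p \alpha_s^{p-1},
\]
and the claim reduces to showing $T_\lambda = 0$ in $S\h^*$ for every conjugacy class $C_\lambda$.

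The next step is to recast $T_\lambda$ as a sum over pairs $(\alpha, \alpha^\vee) \in \h_\F^* \times \h_\F$. A quick degree count in $\mu$ shows the summand is invariant under the one-parameter rescaling $(\alpha, \alpha^\vee) \mapsto (\mu\alpha, \mu^{-1}\alpha^\vee)$ (the exponents combine as $\mu^{1-p+(p-1)} = 1$), so by Proposition \ref{reflequiv},
\[
T_\lambda = \frac{1}{q-1}\sum_{\substack{\alpha, \alpha^\vee \ne 0 \\ (\alpha, \alpha^\vee) = 1 - \lambda}} (y, \alpha)(\alpha^\vee, x)^p \alpha^{p-1},
\]
with the factor $\tfrac{1}{q-1}$ making sense because $\gcd(q-1, p) = 1$. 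Factoring out $(y, \alpha)\alpha^{p-1}$ and fixing $\alpha$, the inner sum runs over an affine hyperplane of $\h_\F$ (semisimple $\lambda \ne 1$) or over $\ker \alpha \setminus \{0\}$ (unipotent $\lambda = 1$). Frobenius additivity on $(\alpha^\vee, x)^p$ reduces both cases to the combination of a term with prefactor $q^{n-1}$ (vanishing since $p \mid q^{n-1}$ for $n \ge 2$) and the single linear-algebra sum $\Sigma := \sum_{\alpha^\vee \in \ker \alpha}(\alpha^\vee, x)^p$.

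It remains to show $\Sigma = 0$. Choosing an $\F_q$-basis $z_1, \ldots, z_{n-1}$ of $\ker \alpha$, expanding $(\alpha^\vee, x)^p$ by Frobenius additivity, and summing coordinate-by-coordinate gives $\Sigma = q^{n-2}\bigl(\sum_{c \in \F_q} c^p\bigr)\sum_j (z_j, x)^p$. For $n \ge 3$ the prefactor $q^{n-2}$ vanishes in $\Bbbk$. For $n = 2$, the Frobenius $c \mapsto c^p$ permutes $\F_q$, so $\sum_c c^p = \sum_c c$, which vanishes for $q > 2$ by the usual orbit argument. The only case missed is $(q, n) = (2, 2)$, exactly matching the excluded hypothesis.

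The main delicate step is the scaling bookkeeping: one has to verify that the summand $(y, \alpha)(\alpha^\vee, x)^p\alpha^{p-1}$ descends to a well-defined function of the equivalence class $\alpha \otimes \alpha^\vee$, so that passing to a sum over pairs and dividing by $q-1$ is legitimate, and then handle the semisimple and unipotent cases in parallel. The underlying vanishing is driven entirely by $p$-divisibility of powers of $q$ and of sums of $p$-th powers over $\F_q$, so no deeper input beyond Proposition \ref{reflequiv} and characteristic-$p$ arithmetic should be required.
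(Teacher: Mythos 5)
Your proof is correct, and while it shares the paper's overall architecture — compute $D_y(x^p)$, kill the derivative term, group reflections by conjugacy class via Proposition \ref{reflequiv}, fix $\alpha$, and then show the inner sum over $\alpha^\vee$ vanishes — the argument you give for that last vanishing step is genuinely different, and in one respect more careful, than the paper's. The paper treats $(\alpha^\vee, x)^p$ as a degree-$p$ polynomial in the $n-1$ affine coordinates of $\alpha^\vee$ and invokes Lemma \ref{ntuplesum} under the condition $p < (n-1)(q-1)$, then patches the residual case $n=2,q=p$ via the identity $a^p=a$ in $\F_p$. However, the quoted numerical criterion also fails for $(n,q)=(3,2)$ (where $p = (n-1)(q-1) = 2$), a case the paper's proof does not explicitly patch even though the proposition's hypothesis admits it; to close it along the paper's lines one would need to observe, as you implicitly do, that Frobenius additivity drops the effective per-variable degree. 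Your version avoids this entirely by pushing Frobenius all the way: the affine shift in the semisimple case contributes a $q^{n-1}(v_0,x)^p$ term killed by $p \mid q^{n-1}$, and the residual $\Sigma$ over $\ker\alpha$ diagonalizes as $q^{n-2}\bigl(\sum_{c\in\F_q}c^p\bigr)\sum_j(z_j,x)^p$, killed by $q^{n-2}=0$ for $n\ge 3$ and by $\sum_c c^p = \sum_c c = 0$ for $n=2,\,q>2$. This isolates the excluded case $(q,n)=(2,2)$ cleanly and handles all the rest uniformly. Your scaling bookkeeping, verifying that $(y,\alpha)(\alpha^\vee,x)^p\alpha^{p-1}$ is invariant under $(\alpha,\alpha^\vee)\mapsto(\mu\alpha,\mu^{-1}\alpha^\vee)$ before passing to a sum over pairs and dividing by $q-1$, is exactly the step the paper takes tacitly when it rewrites the tensor sum as a double sum over $\alpha$ and then $\alpha^\vee$; making it explicit is a genuine improvement in rigor.
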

\begin{proof}
Method of proof is an explicit calculation analogous to the proof of Theorem \ref{glnt0}. By definition, 
\begin{eqnarray*}
D_y(x^p) & = & \partial_y(x^p) - \sum_{s \in S} c_s (\alpha_s, y) \frac{1}{\alpha_s} (1 - s) \ldot x^p\\
&=& 0-\sum_{s \in S} c_s (\alpha_s, y) \frac{1}{\alpha_s} ((1 - s) \ldot x)^p\\
\end{eqnarray*}
We will show that for every conjugacy class $C_{\lambda}$ of reflections, the sum $$\sum_{s \in C_{\lambda}} (\alpha_s, y) \frac{1}{\alpha_s} ((1 - s) \ldot x)^p $$ vanishes. By Proposition \ref{reflequiv}, this is equal to

$$\sum_{\substack{\alpha\otimes \alpha^{\vee}\ne 0 \\ (\alpha,\alpha^{\vee})=1-\lambda}} (\alpha, y) \frac{1}{\alpha} ((\alpha^\vee, x) \alpha)^p =\sum_{\alpha\ne 0}(\alpha, y) \alpha^{p-1}   \sum_{\substack{\alpha^{\vee}\ne 0 \\ (\alpha,\alpha^{\vee})=1-\lambda}} (\alpha^\vee, x)^{p} $$

It is enough to fix $\alpha$ and show that the inner sum over $\alpha^\vee$ is zero. After fixing $\alpha$, let us change the basis so that $\alpha$ is the first element of the new ordered basis. The set of all  $\alpha^\vee\ne 0$ such that $ (\alpha,\alpha^{\vee})=1-\lambda$, written in the dual of this new basis, is $A := \{ ((1-\lambda),a_2,\ldots,a_n)\ne 0 \mid a_2, \ldots, a_n \in \F_{q}\}$. For a fixed $x$, the expression
$$\sum_{\alpha^\vee \in A} (x, \alpha^\vee_s)^p $$ is a sum over all possible values of $n-1$ variables $a_{2},\ldots ,a_{n}$ of a polynomial of degree $p$. By Lemma \ref{ntuplesum}, this is zero if $p<(n-1)(q-1)$. This is only violated when $n=2$, $p=q$. In that case $(\alpha^\vee_s, x)^{p}=(\alpha^\vee_s, x)$ for all $x\in \h^*_\F$, so the expression is equal to the sum over all possible values of one variable $a_{2}$ of a polynomial of degree $1$; this sum is again by Lemma \ref{ntuplesum} equal to zero whenever $1<p-1$. So, the expression is equal to zero, as desired, whenever $(n,q)\ne (2,2)$.
\end{proof}

When $(n,q)=(2,2)$, the claim of the lemma is not true, and the irreducible module with trivial lowest weight is bigger. We will settle the case $(n,q)=(2,2)$ separately by explicit calculations. 

As explained before, studying $L_{1,c}(\mathrm{triv})$ is the same as studying the contravariant form $B$ on $M_{1,c}(\mathrm{triv})$. The following proposition tells us that the set of singular vectors from the previous proposition is large, in the sense that the quotient of $M_{1,c}(\mathrm{triv})$ by the submodule generated by them is already finite dimensional. 

\begin{cor}\label{corBzero}
Suppose $(q,n) \not= (2,2)$.  Then, the form $B_k=0$ and $L_{1,c}(\mathrm{triv})_k=0$ for all $k\ge np - n + 1$.
\end{cor}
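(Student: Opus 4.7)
The plan is to use Proposition \ref{gln-kernel} to show that $L_{1,c}(\mathrm{triv})$ is a quotient of an explicit finite-dimensional algebra whose top degree is $n(p-1)$. First, every $x^p$ with $x\in \h^*$ is singular, so the $H_{1,c}(G,\h)$-submodule each such vector generates is contained in $\Ker B$ (by the remark following Proposition \ref{bform}). Since $\Ker B$ is closed under multiplication by $S\h^*$, it therefore contains the $S\h^*$-ideal generated by $\{x^p : x \in \h^*\}$.

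Next, because we are in characteristic $p$, the identity $(\sum_i a_i x_i)^p = \sum_i a_i^p x_i^p$ collapses this ideal to $I := (x_1^p, \ldots, x_n^p) \subseteq S\h^*$. Hence $L_{1,c}(\mathrm{triv}) = M_{1,c}(\mathrm{triv})/\Ker B$ is a graded quotient of $S\h^*/I$. Applying the Hilbert series formula stated in Section \ref{notation} with $V = \h^*$ and $m = p$ (so that $\Bbbk[x_1,\ldots,x_n]/V \cong \Bbbk$), one obtains
$$\mathrm{Hilb}_{S\h^*/I}(z) = \left(\frac{1-z^p}{1-z}\right)^n = (1+z+\cdots+z^{p-1})^n,$$
a polynomial supported in degrees $0$ through $n(p-1) = np - n$. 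Consequently $L_{1,c}(\mathrm{triv})_k = 0$ for every $k \ge np - n + 1$.

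Finally, since $L_{1,c}(\mathrm{triv})_k = M_{1,c}(\mathrm{triv})_k / (\Ker B)_k$, vanishing of $L_{1,c}(\mathrm{triv})_k$ means that all of $M_{1,c}(\mathrm{triv})_k$ lies in the left radical of $B_k$; because $B_k$ is by construction a pairing between the $k$-th graded pieces of $M_{1,c}(\mathrm{triv})$ and $M_{1,\bar c}(\mathrm{triv}^*)$, this is equivalent to $B_k \equiv 0$. There is no real obstacle here: the genuine content was already proved in Proposition \ref{gln-kernel}, and the only further ingredients are the Frobenius collapse of the ideal of $p$-th powers and the Hilbert series computation recalled in Section \ref{notation}.
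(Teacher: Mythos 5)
Your proof is correct and rests on the same two ingredients as the paper's: Proposition \ref{gln-kernel} (the vectors $x_i^p$ are singular, hence generate submodules inside $\Ker B$) together with the observation that $S\h^*/(x_1^p,\ldots,x_n^p)$ is supported in degrees $\le n(p-1)$ --- the paper phrases this as a pigeonhole argument on monomials plus contravariance of $B$, while you phrase it via the Hilbert series $\left(\frac{1-z^p}{1-z}\right)^n$, which is the same content. The final equivalence $L_{1,c}(\mathrm{triv})_k=0 \Leftrightarrow B_k\equiv 0$ is also handled correctly, so nothing is missing.
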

\begin{proof}
Any degree $n(p - 1) + 1$ monomial must have one of the $n$ basis elements, say $x_i$, raised to at least the $p$-th power by the pigeonhole principle.  Then, $B(x_1^{a_1} \ldots x_n^{a_n}, y) = B(x_i^p, y') = 0$, where $y'$ is the result of the Dunkl operator being applied to $y$ successively. 
\end{proof}

We write the matrices of the form $B_k$ in the monomial basis in $x_i$ for $S\h^*$ and $y_i$ for $S\h$, both in lexicographical order. In the case of $GL_n(\F_q)$ these matrices are surprisingly simple. 

\begin{prop}\label{gln-diag}
Suppose $(q,n) \not= (2,2)$.  Then the matrices of $B_k$ are diagonal for all $k$.
\end{prop}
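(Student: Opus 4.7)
The plan is to reduce $B(x^a, y^b)$ to a constant, impose two structural constraints (singular vectors and torus invariance), and handle a residual case by direct computation.

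First, using $\h$-contravariance of $B$ iteratively (Proposition \ref{bform}(c)), one has
$$B(x_1^{a_1}\cdots x_n^{a_n},\, y_1^{b_1}\cdots y_n^{b_n}) = \bigl(D_{y_1}^{b_1}\cdots D_{y_n}^{b_n}(x^{a})\bigr)\bigm|_{x=0},$$
so it suffices to show this constant is zero for every $a \neq b$. By Proposition \ref{gln-kernel}, $x^p$ is singular in $M_{1,c}(\triv)$ for every $x \in \h^*$, so $S\h^*\cdot x_i^p \subseteq \Ker B$, and hence $B(x^a, y^b) = 0$ whenever some $a_i \geq p$; the mirror argument applied in $M_{1, \bar c}(\triv^*)$ (using that $y^p$ is singular there) gives the same for $b_i \geq p$. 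Hence we may restrict to $0 \leq a_i, b_i \leq p - 1$.

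Next I would apply $G$-invariance of $B$ (Proposition \ref{bform}(a)) to the diagonal torus $T \subseteq GL_n(\F_q)$ consisting of matrices $g = \diag(\mu_1, \ldots, \mu_n)$. A direct computation gives $g \cdot x^a = \mu^{-a} x^a$ and $g \cdot y^b = \mu^b y^b$, so $G$-invariance of $B$ forces $\mu^{b - a} = 1$ for every $(\mu_i) \in (\F_q^\times)^n$ whenever $B(x^a, y^b) \neq 0$; this yields the coordinate-wise congruence $b_i \equiv a_i \pmod{q - 1}$. Combined with $|a_i - b_i| \leq p - 1$, this forces $a = b$ as soon as $q - 1 > p - 1$, which holds precisely for $r \geq 2$, settling the case $q > p$.

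It remains to handle $q = p$ (i.e., $r = 1$), where the congruence only rules out $a_i - b_i \notin \{-(p-1), 0, p-1\}$. Here I would finish by a direct computation patterned on the proofs of Theorem \ref{glnt0} and Proposition \ref{gln-kernel}: unfolding $D_1^{b_1}\cdots D_n^{b_n}(x^a)$ expresses the constant term as an iterated sum over tuples of reflections $s$ of products of pairings $(\alpha_s, y_i)(\alpha_s^\vee, x_j)$. For any off-diagonal $(a, b)$ surviving the torus constraint, I expect such a sum to factor, for some fixed $\alpha$, through an inner sum over $\alpha^\vee \in \h_\F$ of a polynomial of degree strictly less than $(n-1)(q-1)$ in the coordinates of $\alpha^\vee$, which then vanishes by Lemma \ref{ntuplesum}. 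The main obstacle is the combinatorial bookkeeping: iterating the Dunkl operator generates many cross-terms between different reflections and different conjugacy classes, and one must sum them in an order that exhibits, in each remaining off-diagonal case, the low-degree inner sum of Lemma \ref{ntuplesum}. The excluded pair $(q, n) = (2, 2)$ is precisely where this vanishing mechanism, and the singularity of $x^p$, first fails.
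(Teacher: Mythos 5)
Your first two steps are exactly the paper's: use Proposition \ref{gln-kernel} (and its mirror in the dual Verma module) to reduce to exponents $a_i, b_i < p$, then apply torus invariance to get $a_i \equiv b_i \pmod{q-1}$ coordinate-wise, which already closes the case $q > p$. That part is correct and complete.

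The residual case $q = p$ is where your proposal diverges from the paper and is, as written, unlikely to close. You propose to unfold $D_{y_1}^{b_1}\cdots D_{y_n}^{b_n}(x^a)$ into an iterated sum over reflections and finish with Lemma \ref{ntuplesum}. But the degree bound in that lemma has already been pushed to its limit in the proof of Proposition \ref{gln-kernel}: the surviving exponent patterns $(a_i, b_i) \in \{(0, p-1), (p-1, 0)\}$ with $q = p$ are precisely the configurations where the inner polynomial in the coordinates of $\alpha^\vee$ has degree $p-1 = (n-1)(q-1)$ (after isolating two indices and reducing to $n = 2$), so the hypothesis of Lemma \ref{ntuplesum} fails. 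That is not a bookkeeping obstacle; it is the reason a genuinely different idea is needed.

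The paper's resolution of $q = p$ bypasses Dunkl operators entirely. Once the torus constraint and the degree constraint $\sum a_i = \sum b_i$ force (after relabeling) $a_1 = 0,\ b_1 = p-1,\ a_2 = p-1,\ b_2 = 0$, the paper invokes $G$-invariance once more, now under the unipotent element $d_1$ (which sends $x_1 \mapsto x_1 - x_2$ and fixes $y_1$), together with the identity $\binom{p-1}{k} = (-1)^k$ in characteristic $p$. Expanding $(x_1 - x_2)^{p-1}$ in $B((x_1-x_2)^{p-1}, y_1^{p-1}) = B(x_1^{p-1}, y_1^{p-1})$ and discarding the middle terms (which vanish by the congruence constraint you already established) yields $B(x_2^{p-1}, y_1^{p-1}) = 0$ in one line. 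You should replace your "direct computation via Lemma \ref{ntuplesum}" plan by this invariance-under-$d_1$ argument; the rest of your setup already does the work of reducing to that two-variable configuration.
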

\begin{proof}
We will use invariance of $B$ with respect to $G$ to show that for $(a_1,\ldots, a_n)$, $(b_1,\ldots , b_n)\in \mathbb{Z}_{\ge0}^n$, such that $\sum a_i=\sum b_i=k$, if $B(x_1^{a_1} \ldots x_n^{a_n}, y_1^{b_1} \ldots y_n^{b_n})\ne 0$ then  $(a_1,\ldots , a_n)=(b_1,\ldots , b_n)$. This means that the matrices $B_k$, written in monomial basis in lexicographical order, are diagonal. 

Let $g \in G$ be a diagonal matrix with entries $\lambda_1, \ldots, \lambda_n \in \F_q^{\times}$, so that  $g \ldot y_i = \lambda_i y_i$ and $g \ldot x_i = \lambda_i^{-1} x_i$.  Then for any $(a_1,\ldots, a_n)$, $(b_1,\ldots , b_n)\in \mathbb{Z}_{\ge0}^n$, such that $\sum a_i=\sum b_i=k$, we have
\begin{align*}
B(x_1^{a_1} \ldots x_n^{a_n}, y_1^{b_1} \ldots y_n^{b_n}) & = B(g \ldot (x_1^{a_1} \ldots x_n^{a_n}), g \ldot (y_1^{b_1} \ldots y_n^{b_n})) \\
 & = B((\lambda_1^{-1} x_1)^{a_1} \ldots (\lambda_n^{-1} x_n)^{a_n}, (\lambda_1 y_1)^{b_1} \ldots (\lambda_n y_n)^{b_n}) \\
& = \lambda_1^{b_1 - a_1} \ldots \lambda_n^{b_n - a_n} B(x_1^{a_1} \ldots x_n^{a_n}, y_1^{b_1} \ldots y_n^{b_n}).
\end{align*}
So, whenever $B(x_1^{a_1} \ldots x_n^{a_n}, y_1^{b_1} \ldots y_n^{b_n}) \not= 0$, it follows that  $\lambda_1^{b_1 - a_1} \ldots \lambda_n^{b_n - a_n} = 1$ for all $\lambda_1, \ldots, \lambda_n \in \F_q^{\times}$.  Fix $i$.  Set all $\lambda_j$ where $j \ne i$ to be equal to 1, and set $\lambda_i$ to be a multiplicative generator of $\F_q^\times$.  Then, necessarily, $b_i - a_i \equiv 0 \pmod{q-1}$.  

If $q > p$ and $b_i - a_i \equiv 0 \pmod{q-1}$, then either  $a_i = b_i$ for all $i$, or there exists an index $i$ for which  $b_i - a_i \equiv 0 \pmod{q-1}$, $a_i\ne b_i$. In the second case, either $a_i$ or $b_i$ is greater than or equal to $p$, so by the previous proposition, $B(x_1^{a_1} \ldots x_n^{a_n}, y_1^{b_1} \ldots y_n^{b_n}) = 0$. This finishes the proof if $q>p$.

Now assume $q = p$, and $B(x_1^{a_1} \ldots x_n^{a_n}, y_1^{b_1} \ldots y_n^{b_n}) \ne 0$, so $a_i\equiv b_i \pmod{p-1}$ for all $i$, and $a_i,b_i<p$ for all $i$. Then either all $a_i=b_i$ as claimed, or, there exists an index $i$ such that $\{ a_i, b_i\}=\{ 0,p-1\}$. Assume without loss of generality that $i=1$, $a_1=0$, $b_1=p-1$. Using that $\sum a_j=\sum b_j=k$, there exists another index, which we can assume without loss of generality to be $2$, such that $a_2=p-1$, $b_2=0$. Now we are claiming that for any $f$ monomial in $x_3,\ldots , x_n$, any $f'$ monomial in $y_3,\ldots , y_n$, $$B(x_2^{p-1}f, y_1^{p-1}f')=0.$$ We will be working only with indices $1$ and $2$ and choosing $g\in G$ which fixes all others, so assume without loss of generality that $n=2$, $f=f'=1$. We use the invariance of $B$ with respect to $d_{1}\in G$: 
\begin{align*}
B(x_1^{p-1}, y_1^{p-1}) = B(d_1 \ldot (x_1^{p-1}), d_1 \ldot (y_1^{p-1})) = &  B((x_1 - x_2)^{p-1}, y_1^{p-1}) \\
=& B((x_1^{p-1} + \ldots + x_2^{p-1}), y_1^{p-1}) \\
=&B(x_1^{p-1} , y_1^{p-1}) + 0 + B(x_2^{p-1}, y_1^{p-1})
\end{align*}
where the terms in the middle are all zero, as their exponents do not differ by a multiple of $p - 1$.  Thus, $B(x_2^{p-1}, y_1^{p-1}) = 0$ as desired.
\end{proof}
%This proposition tells us that for a monomials $f \in S\h^*$ and $h \in S\h$, $B(f, h) \ne 0$ only if $f$ and $h$ have equivalent expressions in dual bases, e.g. $f = x_1^2 x_2$ and $h = y_1^2 y_2$.

Elements of $G\subseteq H_{t,c}(G,\h)$ have degree $0$ and preserve the graded pieces. So, every graded piece is a finite dimensional representation of $G$. This makes the following lemma useful.

\begin{lemma}\label{gln-shirreducible}
For every $i$, the space $ S^i\h^*/(\langle x_1^p, \ldots, x_n^p \rangle \cap S^i\h^*)\cong \mathrm{span}_{\Bbbk}\{x_1^{a_1}\ldots x_n^{a_n} | a_1,\ldots ,a_n<p \}$ is either zero or an irreducible $G$-representation.
\end{lemma}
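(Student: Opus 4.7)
The space in the lemma is the degree-$i$ piece of $R := S\h^*/\langle x_1^p, \ldots, x_n^p\rangle$, and my plan is to identify $R$ with the simple Cherednik module $L_{1,0}(\triv)$, then transfer irreducibility from $R$ as an $H_{1,0}$-module to each graded piece as a $G$-module. First, the ideal is $GL(\h)$-stable: the Frobenius identity $(\sum a_i x_i)^p = \sum a_i^p x_i^p$ in characteristic $p$ shows it coincides with the basis-independent ideal $\langle x^p : x \in \h^*\rangle$, which is preserved by every element of $GL(\h)$. So $R$ is a graded rational $GL(\h) = GL_n(\Bbbk)$-module, and a fortiori a graded $G$-module.

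To identify $R$ with $L_{1,0}(\triv)$, note that at $(t, c) = (1, 0)$ the Dunkl operators reduce to ordinary partial derivatives, so the contravariant form takes the shape $B(x^a, y^b) = a!\,\delta_{a,b}$ with $a! := \prod_i a_i!$. Since $a!$ vanishes in characteristic $p$ precisely when some $a_i \geq p$, we have $\ker B = \langle x_1^p, \ldots, x_n^p\rangle$, giving $L_{1,0}(\triv) = R$. By Proposition~\ref{lctau-irred}, $R$ is simple as an $H_{1,0}(G, \h)$-module. Now for any nonzero $v \in R_i$, simplicity forces $H_{1,0} \cdot v = R$, and restricting to degree $i$ yields $H_0 \cdot v = R_i$, where $H_0$ is the degree-$0$ subalgebra (generated by $\Bbbk G$ together with the products $x_j y_k$). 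On $R$, each $x_j y_k$ realizes the derivation $x_j \partial_{y_k}$, which is an infinitesimal generator of $\mathfrak{gl}(\h)$ acting on the rational $GL(\h)$-module $R$. Since all weights of $R$ under the diagonal torus lie in the restricted region $\{0, -1, \ldots, -(p-1)\}^n$, $R$ is a restricted rational module, so the $H_0$-action factors through the full rational $GL(\h)$-action together with $\Bbbk G$. Hence $R_i$ is cyclic as a rational $GL(\h)$-module, and together with a one-dimensional Borel-highest-weight subspace (verifiable from the monomial basis), this identifies $R_i = L(\lambda_i)$ as a simple restricted $GL(\h)$-module. Steinberg's restriction theorem then ensures $L(\lambda_i)$ remains simple upon restriction to $G = GL_n(\F_q)$.

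The main obstacle is the passage from $H_0$-cyclicity to $GL(\h)$-simplicity: one must verify both that $R$ is restricted (straightforward from the weight bounds) and that $R_i$ has a one-dimensional Borel-highest-weight subspace, which can be done by inspecting the action of the upper-triangular subgroup on the monomial basis of $R$. An appealing alternative would be a purely internal argument, generating all of $R_i$ from a single nonzero element using permutations in $S_n \subseteq G$ together with unipotent reflections such as $d_1$ (which act by binomial expansions $(x_1 - x_2)^{a_1}$ that spread the exponent support); however, this elementary route requires delicate case analysis, particularly at $q = 2$ where the torus is trivial.
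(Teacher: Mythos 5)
Your approach is correct in substance and genuinely different from the paper's: the paper's entire proof is a citation of Krop's classification of the truncated symmetric powers as the irreducible $GL_n(\F_q)$-modules $W^{(d,0,\ldots)}$, whereas you extract the irreducibility from the Cherednik structure itself. Your identification $L_{1,0}(\triv)=S\h^*/\langle x_1^p,\ldots,x_n^p\rangle$ via $B(x^a,y^b)=a!\,\delta_{a,b}$ is right, Proposition~\ref{lctau-irred} gives simplicity over $H_{1,0}$, the degree-zero subalgebra is indeed generated by $\Bbbk G$ and the $x_jy_k$ (an easy induction with the relation $[y,x]=(y,x)$ at $c=0$), and so every nonzero $v\in R_i$ satisfies $H_0\cdot v=R_i$. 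Two adjustments to your final passage. First, neither restrictedness of $R$ nor a one-dimensional highest-weight space is needed: the only fact required is that every rational $GL(\h)$-submodule of $R_i$ is stable under the operators $x_j\partial_{y_k}$ (these span the differentiated $\mathfrak{gl}(\h)$-action, and algebraic submodules are always submodules for the differentiated Lie algebra action), hence is an $H_0$-submodule; since every nonzero vector generates $R_i$ under $H_0$, irreducibility of $R_i$ as a rational $GL_n(\Bbbk)$-module follows at once, with no Weyl-module-style cyclicity-plus-highest-weight argument. Second, the one genuinely external input is Steinberg's restriction theorem, whose hypothesis you should check explicitly: the relevant condition is that the highest weight be $q$-restricted (not merely $p$-restricted), which holds because all exponent differences are $<p\le q$; and since Steinberg is usually stated for simply connected groups, either restrict further to $SL_n(\F_q)$ (irreducibility there implies irreducibility over the larger group $GL_n(\F_q)$) or invoke the standard $GL_n$ variant. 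With those repairs the argument is complete; the trade-off versus the paper is that you replace the specialized reference to Krop by the standard Steinberg theorem together with the paper's own Proposition~\ref{lctau-irred}, at the cost of a longer argument, and the elementary route you sketch at the end is not needed.
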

\begin{proof}

This can be found in \cite{Krop}, Section $2$, Coraollary $2.6$. The result there describes certain irreducible modules $W^{\lambda}$. In the notation of \cite{Krop} and for $\lambda=(d,0,0,\ldots)$ , the the Corollary says that the modules $\mathrm{span}_{\Bbbk}\{x_1^{a_1}\ldots x_n^{a_n} | a_1,\ldots ,a_n<p \}$, when they are nonzero, are the irreducible modules $W^{(d,0,0,\ldots)}$.

\end{proof}

\begin{prop}\label{gln-multiples}
Suppose $(q,n) \not= (2,2)$.   Then, in each degree $i$, the diagonal elements of the matrix of $B_i$ are constant multiples of the same polynomial in $c_s$.
\end{prop}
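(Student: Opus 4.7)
My plan is to combine the three facts just established: Proposition \ref{gln-kernel} kills everything involving $p$th powers, Proposition \ref{gln-diag} reduces $B_i$ to its diagonal, and Lemma \ref{gln-shirreducible} identifies the relevant quotient of $S^i\h^*$ as an irreducible $G$-module. The key point is that $G$-invariance together with Schur's lemma will force the entire restricted form to be a single scalar times a fixed form.

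First I would observe that, by Proposition \ref{gln-kernel}, the submodule $\langle x_1^p,\ldots,x_n^p\rangle$ of $M_{1,c}(\mathrm{triv})$ lies in $\Ker B$, and symmetrically the analogous submodule on the $\h$-side pairs trivially. So the diagonal entry $D(a_1,\ldots,a_n) := B(x_1^{a_1}\cdots x_n^{a_n},\,y_1^{a_1}\cdots y_n^{a_n})$ vanishes whenever some $a_j \ge p$, and $B_i$ descends to a bilinear pairing $\bar B_i : V_i \times V_i' \to \Bbbk$, where
\[
V_i \;=\; S^i\h^*\big/\bigl(\langle x_1^p,\ldots,x_n^p\rangle\cap S^i\h^*\bigr),\qquad V_i' \;=\; S^i\h\big/\bigl(\langle y_1^p,\ldots,y_n^p\rangle\cap S^i\h\bigr),
\]
each of which by Lemma \ref{gln-shirreducible} is either zero or an irreducible $G$-module.

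Next I would invoke $G$-invariance of $B$ from Proposition \ref{bform}(a): $\bar B_i$ is a $G$-invariant element of $V_i^*\otimes V_i'{}^*$, equivalently a $G$-equivariant homomorphism $V_i \to V_i'{}^*$. Since $V_i$ and $V_i'{}^*$ are irreducible $G$-modules over the algebraically closed field $\Bbbk$, Schur's lemma gives
\[
\dim_{\Bbbk}\Hom_{G}(V_i, V_i'{}^*) \;\le\; 1.
\]
Therefore there exists a fixed $G$-equivariant bilinear form $B_0$ on $V_i\times V_i'$, independent of $c$, and a scalar $P_i(c)\in\Bbbk$ such that $\bar B_i \;=\; P_i(c)\cdot B_0$. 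Because each entry of $B_i$ is a polynomial in the variables $c_s$ (read off directly from the formula for the Dunkl operators), $P_i(c)$ is itself a polynomial.

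Reading off diagonal entries yields $D(a) = B_0(x^a, y^a)\cdot P_i(c)$ when all $a_j < p$, and $D(a)=0$ otherwise, proving that every diagonal entry is a constant multiple of the single polynomial $P_i(c)$. The main obstacle is really the setup rather than the Schur step: one must be careful that the submodules generated by the singular vectors $x_j^p$ and $y_j^p$ really do exhaust the kernel of the quotient to the monomial basis, so that $\bar B_i$ is well defined on an irreducible module (the exclusion $(q,n)\ne(2,2)$ entering through Proposition \ref{gln-kernel}). Once that is in place, Schur's lemma does all the remaining work.
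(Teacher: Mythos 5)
Your argument is correct, but it runs on a genuinely different engine than the paper's. The paper also passes to the quotient $S^i\h^*/(\langle x_1^p,\ldots,x_n^p\rangle\cap S^i\h^*)$ and invokes Lemma \ref{gln-shirreducible}, but it applies irreducibility to the \emph{kernel} of $B_i$ at each fixed value of $c$: the kernel is a $G$-submodule containing the $p$-th-power part, hence is either exactly that part or all of $S^i\h^*$, so the diagonal polynomials $f_m$ with all exponents $<p$ have identical zero sets, and the paper concludes from this that they are constant multiples of one polynomial. You instead apply irreducibility to the space of invariant pairings: $G$-invariance of $B$ (Proposition \ref{bform}(a)) plus Schur's lemma over the algebraically closed field $\Bbbk$ gives $\dim_\Bbbk\Hom_G(V_i,(V_i')^*)\le 1$, so the entire restricted form equals $P_i(c)$ times a fixed form $B_0$, and $P_i$ is a polynomial because the matrix entries are and $B_0$ has a nonzero entry. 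Your route buys something real: it yields the proportionality directly as an identity of polynomials in $c$, whereas the paper's passage from ``same roots'' to ``constant multiples of the same polynomial'' is, read literally, a weaker inference (polynomials with equal vanishing loci need only have equal radicals), so the Schur argument is the cleaner closing step. The one point you should make explicit is the well-definedness of $\bar B_i$ in its second argument: you need $\langle y_1^p,\ldots,y_n^p\rangle\cap S^i\h$ to pair trivially, which follows either from the mirror of Proposition \ref{gln-kernel} applied to $M_{1,\bar c}(G,\h^*,\triv)$ (valid for all parameter values, with $\h$ and $\h^*$ playing symmetric roles for $GL_n(\F_q)$) or, more cheaply, from the diagonality already established in Proposition \ref{gln-diag} combined with the vanishing you have on the $\h^*$ side.
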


\begin{proof}
By previous lemmas, every $B_i$ is a diagonal matrix, with diagonal entries polynomials $f_m$ in $c$ parametrized by $n$-tuples of integers $m=(m_1,\ldots , m_n)$ such that $\sum m_j=i$. The kernel of $B$ at specific $c$ is spanned by all monomials $x^m$ for which $f_m(c)=0$. As the kernel is a submodule of $S^i\h^*$ containing $<x_1^p,\ldots , x_n^p>\cap S^i\h^*$, by the previous lemma it can either be $<x_1^p,\ldots , x_n^p>\cap S^i\h^*$ or the whole $S^i\h^*$. In other words, all polynomials $f_m$ where one of $m_j$ is $\ge p$ are identically zero, and all others have the same roots, so they are constant multiples of the same polynomial in $c_s$.
\end{proof}

%This settles the case of generic $c$ for $(n,q)\ne (2,2)$, by implying that the maximal proper submodule $J_{1,c}(\mathrm{triv})$ is $\langle x_1^p, \ldots, x_n^p \rangle $. 

Next, we will find these polynomials  $f_m$ in each degree and calculate their zeros.

\begin{prop}\label{gln-cdep}
Suppose $(q,n) \not= (2,2)$.  Then,
\begin{itemize}
\item[a)] If $n = 2$ and $q = p$, then $B_i$ depends on the $c_s$ for $p - 1\le i \le 2p - 1$.  The diagonal entries of $B_i$ are $\Bbbk$-multiples of $c_1 + \ldots + c_{p-1} - 1$ for $i\ge p-1$ and $\Bbbk$-multiples of $(c_1 + \ldots + c_{p-1} - 1)(c_1 + 2c_2 + \ldots + (p-1)c_{p-1} + 1)$ for $i\ge p$.
\item[b)] If $n = 3$ and $q = 2$, then $B_i$ depends on the $c_s$ for $i=2,3$, and the diagonal entries are $\Bbbk$-multiples of $c_1 + 1$. 
\item[c)] Otherwise, the form $B$ doesn't depend on $c$.
\end{itemize}
\end{prop}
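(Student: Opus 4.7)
By Propositions~\ref{gln-diag} and~\ref{gln-multiples}, each matrix $B_i$ is diagonal in the monomial basis and its nonzero diagonal entries are $\Bbbk$-multiples of a single polynomial $P_i(c) \in \Bbbk[c]$. Hence for each $i$ it suffices to identify $P_i(c)$ up to scalar by computing one carefully-chosen diagonal entry of $B_i$, and then to identify in which $(n,q,i)$-range this entry fails to be constant in $c$.

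The strategy is to set up a recursion via the contravariance property $B(f, yh) = B(D_y(f), h)$ of Proposition~\ref{bform} combined with diagonality. Writing $y^{\mathbf{a}} = y_j \cdot y^{\mathbf{a}-e_j}$ and applying contravariance, diagonality ensures that when pairing $D_{y_j}(x^{\mathbf{a}})$ against $y^{\mathbf{a}-e_j}$ only the $x^{\mathbf{a}-e_j}$-coefficient of $D_{y_j}(x^{\mathbf{a}})$ contributes. Iterating expresses $B(x^{\mathbf{a}}, y^{\mathbf{a}})$ as a product of such coefficients, each of which is a sum over reflections. After expanding $(1-s)\ldot x^{\mathbf{a}}$ via $s\ldot x = x - (\alpha_s^\vee, x)\alpha_s$ and grouping by conjugacy class using the parametrization of Proposition~\ref{reflequiv}, each such sum over $s \in C_\lambda$ becomes a sum over pairs $(\alpha, \alpha^\vee) \in \h_\F^*\times\h_\F$ satisfying $(\alpha, \alpha^\vee) = 1 - \lambda$, of a polynomial in the coordinates of $\alpha$ and $\alpha^\vee$.

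The key technical step is to evaluate these sums modulo $p$ using Lemma~\ref{ntuplesum}: the polynomials in the free coordinates of $(\alpha, \alpha^\vee)$ generally have total degree strictly below $(q-1)\cdot (\text{number of free variables})$, forcing the sum to vanish in $\Bbbk$. Careful bookkeeping shows this vanishing fails only in the exceptional regimes listed in the statement. For $n\ge 3$ and $q\ge 3$ the sums vanish for all $i$, which is case~(c). For $(n,q) = (2,p)$, surviving contributions first appear at $i = p-1$, producing the factor $c_1 + \ldots + c_{p-1} - 1$, and the additional factor $c_1 + 2c_2 + \ldots + (p-1)c_{p-1} + 1$ enters from higher-order terms once $i \ge p$; Corollary~\ref{corBzero} enforces vanishing beyond $i = 2p-1$. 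For $(n,q) = (3,2)$ a more direct computation involving only the unipotent class isolates the factor $c_1 + 1$ in degrees $2$ and $3$.

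The main obstacle will be the volume of bookkeeping in case~(a), where the two factors must be extracted from the interaction of the unipotent class with the various semisimple classes $C_\lambda$, tracking precisely which binomial coefficients and finite-field sums survive reduction modulo $p$. A related pitfall is that the canonical test monomial $x_1^i$ can give an entry that vanishes for accidental reasons rather than because $P_i(c) = 0$, so one must verify that the chosen monomial actually realizes a nonzero multiple of $P_i(c)$; the $(n,q) = (3,2)$ case illustrates this, since a direct calculation gives $B(x_1^2, y_1^2) = 0$ while the correct factor is read off instead from $B(x_1x_2, y_1y_2) = c_1 + 1$.
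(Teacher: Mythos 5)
Your proposal follows the paper's own proof essentially exactly: both reduce to computing a single carefully chosen nonzero diagonal entry of each $B_i$ via Propositions~\ref{gln-diag} and~\ref{gln-multiples}, use the contravariance recursion $B(x^{\mathbf{a}}, y^{\mathbf{a}}) = B(D_{y_j}(x^{\mathbf{a}}), y^{\mathbf{a}-e_j})$ together with diagonality, expand the Dunkl-operator sum over $\alpha\otimes\alpha^\vee$ via Proposition~\ref{reflequiv}, and invoke Lemma~\ref{ntuplesum} to annihilate the $c_\lambda$-coefficients modulo $\langle x_1^p,\dots,x_n^p\rangle$ outside the exceptional $(n,q,i)$-regimes, where direct computation then produces the two linear factors in case (a) and the factor $c_1+1$ in case (b). Your remark that $x_1^2$ is an unreliable test monomial for $(n,q)=(3,2)$ is apt -- it vanishes simply because it is a $p$-th power (Proposition~\ref{gln-kernel}), not because the polynomial $P_2(c)$ is zero -- and this is exactly why the paper works only with monomials whose exponents are all below $p$.
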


\begin{proof}
The matrices $B_i$ are diagonal, with all diagonal entries being constant multiples of the same polynomial. Our strategy is to compute one nonzero diagonal entry.  

First, we will show claim c). It is sufficient to show that the Dunkl operators on the quotient $M_{1,c}(\tau)/\Ker B$ are independent of $c$. As in the proof of Proposition \ref{gln-kernel}, we compute the part of the Dunkl operator associated to the conjugacy class $C_{\lambda}$ with eigenvalue $\lambda$, and claim that for any monomial $f \in S^i\h^*$, the part of $D_y(f)$ which is the coefficient of $c_{\lambda}$,
$$-\sum_{s \in C_{\lambda}} (y, \alpha_s)\frac{1}{\alpha_s} (1 - s) \ldot f,$$ is in $\Ker B.$

Using Proposition \ref{reflequiv}, we can write this sum over nonzero $\alpha \in \h^*$ and $\alpha^\vee \in \h$, such that $(\alpha, \alpha^\vee) = 1 - \lambda$. Writing it as consecutive sums over $\alpha$ and then over $\alpha^{\vee}$, it is enough to show that for any choice of $\alpha$, the inner sum, over all $\alpha^{\vee}$ such that $(\alpha, \alpha^\vee) = 1 - \lambda$, is contained in $\left<x_1^p,\ldots, x_n^p  \right>$. As in the previous calculations, we fix $\alpha$, and change basis  of $\h^*$ to $x_1',\ldots , x_n'$ so that $x_1'=\alpha$. Let the dual basis of $\h$ be $y_1',\ldots , y_n'$. The inner sum, with vectors written in the basis $y_i'$, is then over  $\alpha^{\vee}\in A_\lambda := \{((1 - \lambda),a_2, \ldots,a_n)\ne 0 \mid a_2, \ldots, a_n \in \F_q\}$.  By Proposition \ref{reflequiv}, the reflection  $s$ corresponding to $\alpha\otimes \alpha^{\vee}$, $\alpha=(1,0,\ldots, 0)$, $\alpha^{\vee}=((1 - \lambda),a_2, \ldots,a_n)$, acts on $\h^*$ as
$$s\ldot x_1'=\lambda x_1'$$
$$s \ldot x_i' = x_i' - a_i x_1', \,\,\, i > 1$$

In addition to fixing $\alpha$, let us also factor out the constant $-(y,\alpha)$. The inner sum for fixed $\alpha$ is then $\sum_{\alpha^{\vee}\in A_\lambda}\frac{1}{\alpha}(1-s)\ldot f$. The set $A_{\lambda}$ is parametrized by $(a_2,\ldots , a_n)\in \F_q^{n-1}$ if $\lambda\ne 1$, and by $(a_2,\ldots , a_n)\ne 0 \in \F_q^{n-1}$ if $\lambda=1$. However, if $\lambda=1$, the summand corresponding to $(a_2,\ldots , a_n)= 0$ is $0$, so we can assume the sum is over all $(a_2,\ldots, a_n)\in \F_q^{n-1}$ in both cases. 

The inner sum  we are calculating is equal to
\begin{equation}\tag{$\star$}
\sum_{(a_2, \ldots, a_n) \in \F_q^{n-1}} \frac{1}{x_1'} \Big( f(x_1', \ldots, x_n') - f(\lambda x_1', x_2' - a_2 x_1', \ldots, x_n' - a_n x_1') \Big).
\end{equation}

It is enough to calculate it for $f$ of the form $f=x_{1}'^{b_1}\ldots x_{n}'^{b_n}$, $b_i<p$. For such $f$,
$$(\star) =\sum_{(a_2, \ldots, a_n) \in \F_q^{n-1}} \frac{-1}{x_1'} \Big( (\lambda x_{1}')^{b_1} (x_{2}'-a_{2}x_{1}')^{b_2} \ldots (x_{n}'-a_{n}x'_{1})^{b_n} -x_{1}'^{b_1}\ldots x_{n}'^{b_n} \Big) =$$
$$=\sum_{(a_2, \ldots, a_n) \in \F_q^{n-1}} \sum_{i_2,\ldots i_n} - {b_{2} \choose i_{2}}\ldots {b_{n} \choose i_{n}} \lambda^{b_{1}}(-a_{2})^{i_{2}}\ldots (-a_{n})^{i_{n}} x_{1}'^{b_{1}+i_{2}+\ldots i_{n}-1}x_{2}'^{b_{2}-i_{2}}\ldots x_{n}'^{b_{n}-i_{n}}$$
the last sum being over all $0\le i_{j}\le b_{j}$ such that not all  $i_{j}$ are $0$ at the same time. The coefficient of each monomial in $x_{i}$ can be seen as a monomial of degree $i_{2}+\ldots i_{n}$ in variables $a_{i}$, so when we sum it over all $(a_{2},\ldots a_{n}) \in \F_{q}^{n-1}$ to get the sum ($\star$), we can use Lemma \ref{ntuplesum} to conclude ($\star$) is $0$ whenever the degree of all polynomials appearing is small enough, more precisely, whenever there exists an index $j$ such that 
$i_{j}<q-1.$ As $i_j\le b_{j}<p$ for all $j$, this only fails when $p=q$ and $b_{2}=b_{3}=\ldots=b_{n}=p-1$. In other words, this proves c) whenever $q\ne p$. 

Now assume $q=p$. By the above argument, the only monomials $f$ for which ($\star$) is not yet known to be zero are the ones of the form $f = x_1'^b x_2'^{p-1} \cdots x_n'^{p-1}$. For such $f$, the sum ($\star$) is by the above argument equal to 
$$ (-1)^{(p-1)(n-1)+1} \sum_{(a_2, \ldots, a_n) \in \F_q^{n-1}} \lambda ^{b}(a_{2})^{p-1}\ldots (a_{n})^{p-1} x_{1}'^{b+(p-1)(n-1)-1}$$

While this is not $0$, the monomial $x_{1}'^{b+(p-1)(n-1)}$ is by Lemma \ref{gln-kernel} in $\Ker B$ whenever it has degree at least $p$, meaning whenever $$b+(p-1)(n-1)-1\ge p.$$ 

If $n = 2$, this condition is $b \ge 2$, which is not satisfied only when $b = 0, 1$.  Thus, for $n=2$, $q=p$, the diagonal matrices $B_{i}$ do not depend on $c$ in degrees $i<p-1$, their entries are multiples of some polynomial in $c$ in degree $p-1$, and some other polynomial (divisible by the first one) in degrees $p$ and higher. Finally, by corollary \ref{corBzero}, all matrices $B_{i}$ become zero at degrees $2p-1$ and higher.

For $n = 3$, this condition is $b+2(p-1)-1 \ge p$, which is equivalent to $p + b \ge 3$. This will not be satisfied only if $p = 2$ and $b = 0$. So, for $n=3$, $p=2$, the matrices $B_0$ and $B_{1}$ do not depend on $c$, the diagonal entries of $B_{i}$ are multiples of the same polynomial in $c$ for $i=2,3$, and $B_{4}=0$. For $n=3$ and all other $p$, the matrices $B_i$ do not depend on $c$.

For $n > 3$, the inequality $b+(p-1)(n-1)-1\ge p$ is always satisfied, so there is never a dependence of matrices $B_{i}$ on the parameters $c$. 

This finishes the proof of (c) and describes the cases in a) and b) for which there might be dependence on parameters $c$.  To finish the proof, it remains to find specific polynomials in cases: (a) $n=2,q=p$, degrees $p-1$ and $p$ and; (b)$n=3,q=2$, degrees $2$ and $3$.

Next, we prove (a). Let $n = 2$ and $q = p$.  We need to compute one nonzero entry of the matrix $B_{p-1}$ and one nonzero entry of $B_{p}$.

To compute the polynomial in degree $p-1$, by Proposition \ref{gln-multiples}, it suffices to compute $B(x_1^{p-1}, y_1^{p-1}) = B(D_{y_{1}}(x_1^{p-1}), y_1^{p-2})$.  For that, by Proposition \ref{gln-diag}, it suffices to show that the coefficient of $x_1^{p-2}$ in $D_{y_1}(x_1^{p-1})$ is a constant multiple of $c_1 + \ldots + c_{p-1} - 1$. Compute
\begin{equation*}
D_{y_1}(x_1^{p-1})  = \partial_{y_1}(x_1^{p-1}) - \sum_{C} c_s \sum_{s \in C} (\alpha_s, y_1) \frac{1}{\alpha_s} ( x_1^{p-1}-(s\ldot x_{1})^{p-1}). \\
\end{equation*}
The coefficient of $x_1^{p-2}$ in $\partial_{y_1}(x_1^{p-1})=(p-1)x_{1}^{p-2}$ is $p-1=-1$, so it suffices to show that for each conjugacy class $C$ the coefficient of $x_1^{p-2}$ in
$$\sum_{s \in C} (\alpha_{s}, y_1) \frac{1}{\alpha_s} ((s \ldot x_1)^{p-1}-x_1^{p-1} )$$
is $1$.  Using $\binom{p-1}{i}=(-1)^i$, we can write this as
$$\sum_{s \in C} (\alpha, y_1) \frac{1}{\alpha_s} ((x_1 - (x_1, \alpha_s^\vee)\alpha_s)^{p-1}-x_1^{p-1}) =$$
$$= \sum_{s \in C} (\alpha_s, y_1)\sum_{i=1}^{p-1}  (\alpha^\vee_{s}, x_1)^i \alpha_s^{i-1} x_1^{p-1-i}.$$
The bases $x_{i}$ and $y_{i}$ are dual, so $\alpha_{s}=(\alpha_{s},y_1)x_1+(\alpha_{s},y_2)x_2$, and the coefficient of  $x_1^{i-1}$ in $\alpha_{s}^{i-1}$ is $(\alpha_s, y_1)^{i-1}$. Thus, the coefficient of $x_1^{p-2}$ in the above sum is
$$\sum_{s \in C} (y_1, \alpha_s) \sum_{i=1}^{p-1} (\alpha^\vee_s, x_1)^i (y_1, \alpha_s)^{i-1}$$ which can be written as
$$ \sum_{s \in C} \left( ((\alpha_s^\vee, x_1)(\alpha_s, y_1) - 1)^{p-1} - 1\right).$$
Each term $((\alpha_s^\vee, x_1)(\alpha_s, y_1) - 1)^{p-1} - 1$ is nonzero if and only if $(\alpha_s^\vee, x_1)(\alpha_s,y_1) = 1$, in which case it is $-1$.  There are $p-1$ choices of the first coordinates of $\alpha_s, \alpha^\vee_s$ that make their product 1.  The product of the second coordinate must now be $(\alpha, \alpha^\vee) - 1$.  This is nonzero, so there are $p - 1$ choices for the second coordinates. Hence, the sum is $(p-1)^2(-1) = -1$, as desired.  Note that this term will appear as a multiplicative factor in higher degrees, since the matrices of $B$ are defined inductively. This proves the claim for degree $p-1$.

Now let us consider degree $p$, with $n=2$ and $q=p$.  We will calculate the value of $B(x_1^{p-1} x_2, y_1^{p-1} y_2)=B(D_{y_2}(x_1^{p-1} x_2),y_1^{p-1})$, equal to the product of the coefficient of $x_1^{p-1}$ in $D_{y_2}(x_1^{p-1} x_2)$ with  $B(x_1^{p-1},y_1^{p-1})$. We proved that $B(x_1^{p-1},y_1^{p-1})$ is a constant multiple of $c_1+\ldots +c_{p-1}-1$, so we are now calculating the coefficient of $x_1^{p-1}$ in 
 \begin{align*}
D_{y_2}(x_1^{p-1} x_2) & = \partial_{y_2}(x_1^{p-1}x_2) - \sum_{\lambda} c_\lambda \sum_{s \in C\lambda} (\alpha_s, y_2) \frac{1}{\alpha_s} (1 - s) \ldot (x_1^{p-1}x_2) \\
 & = x_1^{p-1} - \sum_\lambda c_\lambda \sum_{s \in C_\lambda} (\alpha_s, y_2) \frac{1}{\alpha_s} (x_1^{p-1}x_2 - (s \ldot x_1)^{p-1}(s \ldot x_2)).
\end{align*}

We now use the parametrization of conjugacy classes $C_{\lambda}$ by $\alpha\otimes \alpha^{\vee}$ from Lemma \ref{reflequiv} and Example \ref{n2refl}:
$$\lambda\ne 1:\,\,\,\, C_{\lambda}\leftrightarrow \{\left[\begin{array}{c} 1 \\ b \end{array} \right] \otimes \left[\begin{array}{c} 1-\lambda-bd \\ d \end{array} \right]  | b,d\in \F_p  \} \cup \{ \left[\begin{array}{c} 0 \\ 1 \end{array} \right] \otimes \left[\begin{array}{c} a \\ 1-\lambda \end{array} \right]  | a \in \F_p \} $$
$$\lambda= 1:\,\,\,\, C_{1} \leftrightarrow\{\left[\begin{array}{c} 1 \\ b \end{array} \right] \otimes \left[\begin{array}{c} -bd \\ d \end{array} \right]  | b,d\in \F_p, d\ne 0  \} \cup \{ \left[\begin{array}{c} 0 \\ 1 \end{array} \right] \otimes \left[\begin{array}{c} a \\ 0 \end{array} \right]  | a \in \F_p, a\ne 0 \}.$$

We are calculating the coefficient of $x_1^{p-1}$ in 
$$ x_1^{p-1} - \sum_\lambda c_\lambda\sum_{\alpha\otimes\alpha^{\vee}\leftrightarrow C_{\lambda}}(\alpha, y_2) \frac{1}{\alpha} \left(x_1^{p-1}x_2 - (x_1-(\alpha^{\vee}, x_1)\alpha)^{p-1}(x_2-(\alpha^{\vee},x_2)\alpha)\right)=$$
$$=x_1^{p-1} - \sum_\lambda c_\lambda\sum_{\alpha\otimes\alpha^{\vee}\leftrightarrow C_{\lambda}}(\alpha, y_2) \frac{1}{\alpha} \left(x_1^{p-1}x_2 - (x_1-(\alpha^{\vee}, x_1)\alpha)^{p-1}x_2\right)-$$
$$ - \sum_\lambda c_\lambda\sum_{\alpha\otimes\alpha^{\vee}\leftrightarrow C_{\lambda}}(\alpha, y_2) \frac{1}{\alpha} \left((x_1-(\alpha^{\vee}, x_1)\alpha)^{p-1}(\alpha^{\vee},x_2)\alpha\right).$$
The term $x_1^{p-1}-(x_1-(\alpha^{\vee}, x_1)\alpha)^{p-1}$ is divisible by $\alpha$, so $$\frac{1}{\alpha} \left(x_1^{p-1}x_2 - (x_1-(\alpha^{\vee}, x_1)\alpha)^{p-1}(x_2-(\alpha^{\vee},x_2)\alpha)\right),$$ written in a monomial basis in $x_1$ and $x_2$, is divisible by $x_2$. These terms can be disregarded when calculating the coefficient of $x_1^{p-1}$ in the above sum. 

Let $\alpha_{b}=x_1+bx_2$. We are calculating the coefficient of $x_1^{p-1}$ in 
$$x_1^{p-1}  - \sum_\lambda c_\lambda\sum_{\alpha\otimes\alpha^{\vee}\leftrightarrow C_{\lambda}}(\alpha, y_2) \frac{1}{\alpha} \left((x_1-(\alpha^{\vee}, x_1)\alpha)^{p-1}(\alpha^{\vee},x_2)\alpha\right)=$$
$$=x_1^{p-1}  - \sum_\lambda c_\lambda \Big(
\sum_{b,d} b \frac{1}{\alpha_b} \left((x_1-(1-\lambda-bd)\alpha_b)^{p-1}d \alpha_b\right) +
\sum_{a}\frac{1}{x_2} \left((x_1-ax_2)^{p-1}(1-\lambda)x_2\right) 
\Big)$$
$$=x_1^{p-1}  - \sum_\lambda c_\lambda \Big(
\sum_{b,d} bd\sum_{i=0}^{p-1}(1-\lambda-bd)^ix_1^{p-1-i}(x_1+bx_2)^i+(1-\lambda)\sum_{a}(x_1-ax_2)^{p-1}
\Big)$$
Here, the sum over is over all $b,d\in \F_p$ and over all $a\in \F_p$ if $\lambda\ne 1$, and over $a,b,d\in \F_p$, $d,a\ne 0$ if $\lambda=1$. The coefficient of $x_1^{p-1}$ is:
$$1- \sum_\lambda c_\lambda \Big(
\sum_{b,d} bd\sum_{i=0}^{p-1}(1-\lambda-bd)^i +(1-\lambda)\sum_{a}1 \Big)=$$
$$1-\sum_{\lambda\ne 1} c_\lambda \Big(
\sum_{b,d} (bd)^{p-1}(-\lambda) \Big) -c_1 \Big( \sum_{b,d} (bd)^{p-1} \Big)=1+\sum_{\lambda} \lambda c_\lambda.$$

This ends the proof of (a).

To prove (b), $n = 3$, $q = 2$, we computed the matrices $B_{i}$ explicitly.
\end{proof}

The combination of these results and the explicit computations in case $(n,q)=(2,2)$ gives us the main theorem of this section:
\begin{thm}\label{main}
Let $\Bbbk$ be an algebraically closed field of characteristic $p$.  Let $G = GL_n(\F_q)$ for $q = p^r$ and $n\ge 2$. The  following is a complete classification of characters of $L_{1,c}(\triv)$ for all values of $c$:\\

\hspace{-0.5cm}\begin{tabular}{|c|c|c|c|}
\hline
$(q,n)$ & $c$ & $\chi_{L_{1,c}(\mathrm{triv})}(z)$ & $\mathrm{Hilb}_{L_{1,c}(\mathrm{triv})}(z)$ \\ \hline \hline
$(q,n) \ne (2,2)$ & generic & $\sum_{i\ge 0}(\left[ S^i\h^*/\left<x_1^p,\ldots , x_n^p \right>\cap S^i\h^*\right])z^i$ & $\left(\displaystyle\frac{1 - z^p}{1 - z}\right)^n$ \\ \hline
$(2,2)$ & generic & $\left(\sum_{i\ge 0}\left[S^i\h^*\right] z^i\right)(1-z^4)(1-z^6)$ & $\displaystyle\frac{(1 - z^4)(1 - z^6)}{(1 - z)^2}$ \\ \hline
\hline
%$(q,n)$ & $c$ & $\chi_{L_{1,c}(\tau)}(z)$& $\mathrm{Hilb}_{L_{1,c}(\tau)}(z)$ \\ \hline
$ (2,3)$ & $c=1$ & $\displaystyle{\left[\mathrm{triv}\right]+\left[\h^*\right]z}$ & $1 + 3z$ \\ \hline
$(p, 2), p \ne 2$ & $\displaystyle{\sum_{i=1}^{p-1}c_i=1}$ & $\displaystyle{\sum_{i=0}^{p-2}\left[ S^i\h^*\right]z^i}$ & $\displaystyle{\sum_{i=0}^{p-2}(i+1)z^i }$ \\ \hline
$(p, 2), p \ne 2$ & $\displaystyle{\sum_{i=1}^{p-1}c_i\ne1, \sum_{i=1}^{p-1}ic_i=-1}$ & $\displaystyle{\sum_{i=0}^{p-1}\left[ S^i\h^*\right]z^i}$ & $\displaystyle{\sum_{i=0}^{p-1}(i+1)z^i } $ \\ \hline
$(2,2)$ & $c=1$ & $\displaystyle{\left[\mathrm{triv}\right]}$ & $1$ \\ \hline
$ (2,2)$ & $c=0$ & $\displaystyle{\left[\mathrm{triv}\right]+\left[\h^*\right]z+\left[\mathrm{triv}\right]z^2}$ & $1+2z+z^2$ \\ \hline

\end{tabular}

%\begin{tabular}{|c|c|c|cl}
%\hline
%& $c=1$ & $1 + 3z$ \\ \cline{2-3}
%$(q,n) = (2,3)$ & otherwise & $\left(\frac{1 - z^2}{1 - z}\right)^3$ \\ \hline
%$(q,n) = (p^r, 2), r > 1$ & all cases & $\left(\displaystyle\frac{1 - z^p}{1 - z}\right)^2$ \\ \hline
%& $c_1 + \cdots + c_{p-1} = 1$ & $1 + 2z + \cdots + (p-1)z^{p-2}$ \\  \cline{2-3}
%& $c_1 + \cdots + c_{p-1} \ne 1$ and  &  \\
%$(q,n) = (p, 2), p \ne 2$ & $c_1 + 2c_2 + \cdots + (p-1)c_{p-1} = -1$ & $1 + 2z + \cdots + pz^{p-1}$\\  \cline{2-3}
%& otherwise & $\left(\displaystyle\frac{1 - z^p}{1 - z}\right)^2$ \\ \hline
%& $c = 1$ & 1 \\  \cline{2-3}
%$(q,n) = (2,2)$ & $c = 0$ & $1 + 2z + z^2$ \\  \cline{2-3}
%& otherwise & $\displaystyle\frac{(1 - z^4)(1 - z^6)}{(1 - z)^2}$\\ \hline
%\end{tabular}
\end{thm}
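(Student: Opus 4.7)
The plan is to assemble the theorem case by case from the structural results already established in Propositions \ref{gln-kernel}, \ref{gln-diag}, \ref{gln-shirreducible}, \ref{gln-multiples}, and \ref{gln-cdep}. The common thread is that on $M_{1,c}(\triv) \cong S\h^*$ the contravariant form $B_k$ is represented, in the monomial basis, by a diagonal matrix whose entries are all $\Bbbk$-multiples of a single polynomial $f_k(c)$ that Proposition \ref{gln-cdep} pins down explicitly. So in every degree $k$, $\Ker B_k$ is either $\langle x_1^p,\ldots,x_n^p\rangle \cap S^k\h^*$ (when $f_k(c)\ne 0$) or all of $S^k\h^*$ (when $f_k(c)=0$); this dichotomy is forced by the $G$-irreducibility of the quotient $S^k\h^*/(\langle x_1^p,\ldots,x_n^p\rangle \cap S^k\h^*)$ recorded in Proposition \ref{gln-shirreducible}.

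First I would handle the generic row for $(q,n)\ne (2,2)$. Proposition \ref{gln-kernel} gives $\langle x_1^p,\ldots,x_n^p\rangle \subseteq \Ker B$. For generic $c$ the scalar $f_k(c)$ is nonzero in every degree, so combining with the diagonal form of $B_k$ we get exactly $\Ker B_k = \langle x_1^p,\ldots,x_n^p\rangle \cap S^k\h^*$, yielding the character formula. The Hilbert series $\left(\tfrac{1-z^p}{1-z}\right)^n$ follows because $x_1^p,\ldots,x_n^p$ is a regular sequence in $S\h^*$, as already observed in the general formula from Section \ref{notation}.

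Next I would handle the exceptional values of $c$ for $(q,n)\ne (2,2)$, reading them off Proposition \ref{gln-cdep}. In each case I need to (i) determine which degrees $k$ are ``$c$-sensitive'' (i.e.\ those where $f_k$ is non-constant) and (ii) check Corollary \ref{corBzero}'s vanishing bound $np-n+1$. For $(n,q)=(p,2)$ with $p\ne 2$: on the locus $\sum c_i = 1$ every diagonal entry for $k\ge p-1$ vanishes, so $L_{1,c}(\triv)_k = 0$ for $k\ge p-1$, while for $k<p-1$ the form is independent of $c$ and nondegenerate on $S^k\h^*$; on the locus $\sum c_i \ne 1,\ \sum i c_i = -1$ only the second factor in Proposition \ref{gln-cdep}(a) kills $B_k$ and only for $k\ge p$, leaving the expected character through degree $p-1$. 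For $(n,q)=(3,2)$ with $c=1$, Proposition \ref{gln-cdep}(b) combined with Corollary \ref{corBzero} forces the module to collapse to degrees $0$ and $1$.

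Finally, the case $(q,n)=(2,2)$ must be treated separately since Proposition \ref{gln-kernel} (and everything downstream) fails there. The plan is to compute the full matrices $B_i$ directly in MAGMA, verify that for generic $c$ the kernel is exactly the submodule generated by the Dickson invariants $Q_0,Q_1$ (of degrees $4$ and $6$ at $p=q=2$), so that $L_{1,c}(\triv)=N_{1,c}(\triv)$ with the claimed baby-Verma character from Section \ref{babychar}; and to confirm the two special values $c=0$ (trivial Dunkl action gives $\Ker B = \langle x_1^2,x_2^2\rangle$) and $c=1$ (a direct rank computation shows $B_i=0$ for $i\ge 1$) produce the entries in the final two rows. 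I expect the main obstacle to be not in the abstract machinery but in pinning down the exceptional loci precisely: verifying that no further hypersurfaces of degeneration exist in the $(p,2)$ family beyond the two conditions $\sum c_i=1$ and $\sum i c_i = -1$ requires knowing that the polynomials $f_{p-1}$ and $f_p$ computed in the proof of Proposition \ref{gln-cdep}(a) are, up to scalar, exactly the linear and quadratic factors written there, with no hidden extra factors — this is essentially the content of that proposition's explicit coefficient computation, which must be quoted carefully.
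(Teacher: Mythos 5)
Your proposal is correct and follows essentially the same route as the paper: the generic row for $(q,n)\ne(2,2)$ comes from Proposition \ref{gln-kernel} combined with the diagonal/irreducibility structure (the paper packages the last step via Proposition \ref{reducedchar}, which is also what guarantees the $c$-independent diagonal entries are nonzero — equivalently, evaluate at $c=0$ where $D_y=\partial_y$), the exceptional loci are read off Proposition \ref{gln-cdep} exactly as you describe, and $(q,n)=(2,2)$ is handled by direct computation of the matrices $B_i$. No substantive difference from the paper's argument.
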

\begin{proof}
For $(n,q)=(2,2)$, the matrices $B_i$ of the form $B$ can be computed explicitly, and one can see that they are $0$ starting in degree $1$ when $c=1$ and starting in degree $3$ when $c=0$. For all other $c$, they are full rank on $N_{1,c}(\mathrm{triv})$, so $L_{1,c}(\mathrm{triv})=N_{1,c}(\mathrm{triv})$ is the quotient of the Verma module by squares of the invariants, which are in degrees $4$ and $6$.

For $(q,n)\ne(2,2)$ and generic $c$, we saw in Proposition \ref{gln-kernel} that $J_0'(\triv)$ contains $\langle x_1^p, \ldots, x_n^p \rangle$, so by Proposition \ref{reducedchar} the reduced module $R_{1,c}(\triv)$ is a quotient of the trivial module. From this it follows that for generic
$c$,  $J_{1,c}(\triv)=\langle x_1^p, \ldots, x_n^p \rangle$ and that the character of $L_{1,c}(\triv)$ for generic $c$ and $(q,n)\ne(2,2)$ is as stated above.

The characters for special $c$ are computed by looking at the roots of polynomials on the diagonal in $B_i$, and are computed directly from Proposition \ref{gln-cdep}.  
\end{proof}

\begin{rem}
Notice that for $n,p,r$ large enough, the character does not depend on $c$ at all. This never happens in characteristic zero.  
\end{rem}

\begin{rem}
Notice that the claims from Remarks \ref{conj00} and \ref{conj02} and Question \ref{conj01} hold in the case of $G=GL_{n}(\F_q)$. Namely, by observing the characters one can see that all $L_{t,c}(\triv)$ for generic $c$ have one dimensional top degree and are thus Frobenius; that for $h_1$ the reduced Hilbert series of $L_{1,c}(\triv)$ at generic $c$, $h_1(1)$ is either $|G|$ (in the case of $(q,n)=(2,2)$, when they are both $6$) or $h_1(1)=1$ (in all other cases), and that for $h_0$ the Hilbert series of $L_{0,c}(\triv)$ at generic $c$, the equality $h_0=h_1$ always holds. 
\end{rem}

\section{Description of $L_{t,c}(\triv)$ for $SL_n(\F_{p^r})$}\label{slnsec}

In this section we explore category $\mathcal{O}$ for the rational Cherednik algebra associated to the special linear group over a finite field. We start with some preliminary facts about relations between rational Cherednik algebras associated to some group and to its normal subgroup, and by looking more carefully into conjugacy classes of reflections in $SL_n(\F_{p^r})$.

\subsection{Normal subgroups of reflection groups}
Let $G\subseteq GL(\h)$ be any reflection group, and assume $N \subset G$ is a normal subgroup with a property that two reflections in $N$ are conjugate in $N$ if and only if they are conjugate in $G$. Let $c$ be a $\Bbbk$-valued conjugation invariant function on reflections of $N$, and extend $c$ to all reflections in $G$ by defining it to be zero on reflections which are not in $N$. Then one can consider the rational Cherednik algebra $H_{t,c}(N,\h)$ as a subalgebra of $H_{t,c}(G,\h)$; it has fewer generators and the same relations. 

Let $\tau$ be an irreducible representation of $G$, and assume it is irreducible as a representation $\tau|_{N}$ of $N$. Consider two representations of $H_{t,c}(N,\h)$: the irreducible representation $L_{t,c}(\tau|_{N})=L_{t,c}(N,\h,\tau|_{N})$, and the irreducible representation $L_{t,c}(\tau)=L_{1,c}(G,\h,\tau)$ of $H_{t,c}(G,\h)$ restricted to $H_{t,c}(N,\h)$. 

\begin{lemma}\label{normalsubgp}
As representations of $H_{1,c}(N,\h)$, $L_{1,c}(\tau|_{N})\cong L_{1,c}(\tau)|_{H_{1,c}(N,\h)}$.
\end{lemma}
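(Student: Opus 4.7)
The plan is to trace through the construction of $L_{1,c}(\tau)$ in both settings and check that every step is compatible with restriction from $H_{1,c}(G,\h)$ to $H_{1,c}(N,\h)$. The crucial observation is that the Dunkl operator in $H_{1,c}(G,\h)$ acting on a Verma module is literally the \emph{same} operator as the Dunkl operator in $H_{1,c}(N,\h)$: the sum
\[
D_y = \partial_y \otimes 1 - \sum_{s \in S_G} c_s \frac{(y,\alpha_s)}{\alpha_s}(1-s)\otimes s
\]
collapses to a sum over reflections in $N$ by the hypothesis that $c_s = 0$ for $s \in S_G \setminus S_N$. Note also that by normality of $N$ we have $s \in N$ iff $s^{-1} \in N$, so $\bar c$ likewise vanishes on reflections outside $N$.

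First I would verify that $M_{1,c}(G,\h,\tau)$, restricted to $H_{1,c}(N,\h)$, is isomorphic to $M_{1,c}(N,\h,\tau|_N)$ as $H_{1,c}(N,\h)$-modules. Both carry the underlying vector space $S\h^*\otimes \tau$; the actions of $x \in \h^*$ and of $n \in N$ coincide tautologically, and the action of $y \in \h$ agrees by the observation above. Using $(\tau|_N)^* = \tau^*|_N$ and the corresponding vanishing of $\bar c$, the analogous identification
\[
M_{1,\bar c}(G,\h^*,\tau^*)\big|_{H_{1,\bar c}(N,\h^*)} \;\cong\; M_{1,\bar c}(N,\h^*,(\tau|_N)^*)
\]
holds on the dual side.

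Next I would show that the contravariant forms $B^G$ and $B^N$ agree as bilinear pairings on these identified Verma modules. By Proposition \ref{bform}, each form is uniquely determined by its normalization on the degree zero piece together with contravariance through the Dunkl operators. The degree zero pieces are identified as $\tau = \tau|_N$, and the Dunkl operators coincide by the first step, so the uniqueness forces $B^G = B^N$.

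Finally, since $\Ker B$ is by definition the left radical of the contravariant form, the equality of forms yields $\Ker B^G = \Ker B^N$ as subspaces of the common Verma module, and passing to the quotient gives
\[
L_{1,c}(\tau)\big|_{H_{1,c}(N,\h)} \;=\; M_{1,c}(G,\h,\tau)/\Ker B^G \;=\; M_{1,c}(N,\h,\tau|_N)/\Ker B^N \;=\; L_{1,c}(\tau|_N).
\]
There is no genuine obstacle here; the content of the lemma is really the vanishing of $c$ and $\bar c$ on $S_G \setminus S_N$, and the remaining work is purely bookkeeping to confirm that the identifications of duals, degree zero pieces, and the conjugation-equivalence hypothesis on reflections fit together consistently.
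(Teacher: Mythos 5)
Your proposal is correct and takes essentially the same route as the paper: both arguments rest on the observation that since $c$ (and hence $\bar c$) vanishes outside $N$, the Dunkl operators of $H_{1,c}(G,\h)$ and $H_{1,c}(N,\h)$ are literally the same operator on the naturally identified Verma modules $S\h^*\otimes\tau$, so the contravariant forms coincide and the irreducible quotients agree. You fill in a couple of details the paper leaves implicit (the identification on the dual side, the uniqueness argument for $B$ via Proposition~\ref{bform}), but the underlying idea and structure of the proof are the same.
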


\begin{proof}
The reflections in $N$ are a subset of reflections in $G$. Because $N$ is normal in $G$, every conjugacy class in $G$ is either contained in $N$ or does not intersect it. By the assumption, two reflections in $N$ which are conjugate in $G$ are also conjugate in $N$, so conjugacy classes in $N$ are a subset of conjugacy classes in $G$. 

The Verma modules $M_{t,c}(G,\h,\tau)$ and $M_{t,c}(N,\h,\tau|_N)$ are both naturally isomorphic to $S\h^*\otimes \tau$ as vector spaces, and this induces their natural isomorphism as  $H_{t,c}(N,\h)$ representations. The modules $L_{t,c}(\tau)$ and $L_{t,c}(\tau|_N)$ are their quotients by the kernel of the contravariant form, which is controlled by Dunkl operators. Because of the discussion of conjugacy classes in $N$ and $G$ and because of the definition of $c$, the Dunkl operators are the same for $H_{t,c}(N,\h)$ and $H_{t,c}(G,\h)$.
\end{proof}

One could define Verma modules, baby Verma modules and their quotients by the kernel of the  contravariant form (chosen so that it is non-degenerate on the lowest weights) even in cases when the lowest weight is not irreducible as a representation of the reflection group. In that case, an analogous lemma  is that the composition series of $\tau$ as a representation of $N$ is the same as the composition series of $L_{t,c}(\tau)$ as a representation of $H_{t,c}(N,\h)$. We will not need this here. 

\subsection{Conjugacy classes of reflections in $SL_n(\F_q)$}

In this section we will study $G = SL_n(\F_q)$ for $q = p^r$ a prime power and $n>1$.  As before, let $\h = \Bbbk^n$, $\Bbbk = \overline{\F}_p$, and $\tau$ be the trivial representation. Further, let $Q$ be the set of nonzero squares in $\F_q$ and $R$ be the set of non-squares.

All reflections in $SL_n(\F_q)$ are unipotent and conjugate in $GL_{n}(\F_q)$ to $d_{1}$. It is easy to see that $SL_{n}(\F_q)$ is generated by them. The group $SL_n(\F_q)$ is a normal subgroup of  $GL_n(F_q)$, and it contains all the reflections from the unipotent conjugacy class in $GL_{n}(\F_q)$. However, the second condition from the above discussion, that two reflections are conjugate in $GL_{n}(\F_q)$ if and only if they are conjugate in $SL_{n}(\F_q)$, is not satisfied for all $n,q$. For example, $$\left[ \begin{array}{cc} 1 & -1\\ 0 & 1 \end{array}\right]= \left[ \begin{array}{cc} -1 & 0\\ 0 & 1 \end{array}\right] \left[ \begin{array}{cc} 1 & 1\\ 0 & 1 \end{array}\right] \left[ \begin{array}{cc} -1 & 0\\ 0 & 1 \end{array}\right] \textrm{ is not conjugate in }SL_2(\F_3) \textrm{ to } \left[ \begin{array}{cc} 1 & 1\\ 0 & 1 \end{array}\right].$$

\begin{prop}\label{sln-nosplit}
Let $q = p^r$ be a prime power. If $n \geq 3$, or $p = 2$, then two reflections are conjugate in $SL_n(\F_q)$ if and only if they are conjugate in $GL_n(\F_q)$, and there is one conjugacy class of reflections in $SL_n(\F_q)$. Otherwise (for $n=2$ and $q=p^r$, $p\ne 2$),  there are two conjugacy classes of unipotent reflections in $SL_n(\F_q)$:
$$C_{Q}=\{ gd_1g^{-1} | g\in GL_{2}(\F_q), \det(g) \textit{ is a square} \}$$ and 
$$C_{R}=\{ gd_1g^{-1} | g\in GL_{2}(\F_q), \det(g) \textit{ is not a square} \}.$$

\end{prop}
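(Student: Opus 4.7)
The plan is to reduce the question to computing the image of the determinant map on the $GL_n(\F_q)$-centralizer of $d_1$, and then to read off the splitting behavior from whether this image is all of $\F_q^\times$ or an index-two subgroup.

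First I would note that every reflection in $SL_n(\F_q)$ must be unipotent: by Proposition \ref{reflequiv} a semisimple reflection acts on $\h^*$ with a single non-unit eigenvalue $\lambda \ne 1$, hence has determinant $\lambda \ne 1$ and cannot lie in $SL_n(\F_q)$. By Lemma \ref{conjclassGLn} all unipotent reflections in $GL_n(\F_q)$ form a single conjugacy class represented by $d_1$, so every reflection in $SL_n(\F_q)$ is $GL_n(\F_q)$-conjugate to $d_1$, and it suffices to count the $SL_n(\F_q)$-conjugacy classes inside this single $GL_n(\F_q)$-class.

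Next I would apply the standard orbit argument: two elements of the $GL_n(\F_q)$-orbit of $d_1$, say $g d_1 g^{-1}$ and $g' d_1 g'^{-1}$, are $SL_n(\F_q)$-conjugate if and only if $g' g^{-1} \in SL_n(\F_q) \cdot Z_{GL_n(\F_q)}(d_1)$. Consequently the set of $SL_n(\F_q)$-conjugacy classes inside the $GL_n(\F_q)$-orbit of $d_1$ is in bijection with the cokernel $\F_q^\times \big/ \det\bigl(Z_{GL_n(\F_q)}(d_1)\bigr)$, since $\det\colon GL_n(\F_q)/SL_n(\F_q) \to \F_q^\times$ is an isomorphism. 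So the whole question reduces to determining this image.

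The main step is then the block computation of $\det$ on $Z_{GL_n(\F_q)}(d_1)$, using the description of the centralizer given in the proof of Lemma \ref{conjclassGLn}. For $n=2$ the centralizer is $\left\{\bigl(\begin{smallmatrix} a & b \\ 0 & a \end{smallmatrix}\bigr) : a \in \F_q^\times, b \in \F_q\right\}$, whose determinant takes values $a^2$, so the image equals the squares $(\F_q^\times)^2$. For $n\ge 3$ the centralizer contains a $GL_{n-2}(\F_q)$-block acting on the kernel of $1 - d_1$ restricted to a complementary subspace, and since $\det\colon GL_{n-2}(\F_q)\to\F_q^\times$ is already surjective, so is $\det$ on the full centralizer. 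Combining with the observation that in characteristic $p=2$ the Frobenius $x\mapsto x^2$ is an automorphism of $\F_q^\times$, so $(\F_q^\times)^2 = \F_q^\times$, we see that the cokernel is trivial in every case except $n=2$, $p\ne 2$, where it has order $2$ with cosets given by the squares and non-squares; these two cosets are precisely what define $C_Q$ and $C_R$. The only delicate point — though essentially routine — is carrying out the block-matrix centralizer computation correctly; once the image of the determinant is in hand the splitting follows immediately.
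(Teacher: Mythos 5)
Your proposal is correct and takes essentially the same route as the paper: the paper's (very terse) proof also reduces the count of $SL_n$-classes inside the $GL_n$-class of $d_1$ to a determinant calculation on the centralizer $Z_{GL_n(\F_q)}(d_1)$, which is read off from the centralizer description in the proof of Lemma~\ref{conjclassGLn}. Your identification $\det\bigl(Z_{GL_n(\F_q)}(d_1)\bigr) = (\F_q^\times)^2 \cdot \det\bigl(GL_{n-2}(\F_q)\bigr)$, and the subsequent case split on $n=2$ versus $n\geq 3$ and $p=2$ versus $p$ odd, matches the intended argument.

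One tiny stylistic remark: when you say the $GL_{n-2}(\F_q)$-block acts ``on the kernel of $1-d_1$ restricted to a complementary subspace,'' the cleaner phrasing is that it acts on a complement inside $\ker(1-d_1)$ to the line $\mathrm{im}(1-d_1)$; as written the phrase is slightly ambiguous, though the computation you base it on (the determinant of a centralizer element is $a^2 \det(M')$ with $M'\in GL_{n-2}(\F_q)$) is exactly right.
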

\begin{proof}
Direct computation, using the description of conjugacy classes in $GL_n(\F_q)$ and the fact that two reflections are conjugate in  $SL_n(\F_q)$ if and only if they are conjugate in $GL_n(\F_q)$ by some element whose determinant is in $Q$ (the set of squares in $\F_q$).

\end{proof}

If $n\ge 3$ or $p=2$, then the only conjugacy class in $SL_n(\F_q)$ is equal to the conjugacy class $C_1$ of $GL_n(\F_q)$. Let us call this class $C$, and the constant associated to it $c$. If $n=2$ and $p\ne 2$, to the two conjugacy classes $C_R$ and $C_Q$ we will associate parameters  $c_R$ and $c_Q$. Note that  $C_R \cup C_Q=C_1$. In the case of only one conjugacy class, we will use Lemma \ref{normalsubgp} to transfer character formulas for rational Cherednik algebras associated to $GL_n(\F_q)$ to character formulas for rational Cherednik algebras associated to $SL_n(\F_q)$. In the case of two conjugacy classes, we will have to do more computations to get character formulas. Let us first look more closely into the case of two conjugacy classes.

\begin{lemma}\label{sln-qrcc}
Let $n=2$, $q=p^r$, and $p\ne 2$. Let $\gamma \in R$ be an arbitrary non-square in $\F_q$.  Let $s$ be a reflection in $SL_2(\F_q)$.  Then, $s$ and $\gamma s - (\gamma - 1)$ are in different conjugacy classes.
\end{lemma}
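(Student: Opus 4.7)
The plan is to reduce the problem to a single explicit representative and then invoke conjugation-equivariance of the operation $s \mapsto \gamma s - (\gamma-1)$.

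First I would verify that $\gamma s - (\gamma-1)$ is a reflection in $SL_2(\F_q)$. Writing $\gamma s - (\gamma-1) = 1 + \gamma(s-1)$, and noting that $s$ unipotent implies $s-1$ is rank one nilpotent with $(s-1)^2 = 0$, the operator $\gamma(s-1)$ is again rank one nilpotent, so $1+\gamma(s-1)$ is again a unipotent (hence $SL_2$) reflection. In the parametrization of Proposition \ref{reflequiv}, if $s$ corresponds to $\alpha \otimes \alpha^\vee$ with $(\alpha,\alpha^\vee)=0$, then $\gamma s - (\gamma-1)$ corresponds to $\alpha \otimes (\gamma\alpha^\vee)$.

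Next, the key observation is that the operation $s \mapsto \gamma s - (\gamma-1)$ commutes with conjugation: if $s' = gsg^{-1}$, then $\gamma s' - (\gamma-1) = g(\gamma s - (\gamma-1))g^{-1}$, since the scalar $(\gamma-1)$ is central. Thus this operation induces a well-defined map on the set of $SL_2(\F_q)$-conjugacy classes of reflections. By Proposition \ref{sln-nosplit}, for $n=2$ and $p$ odd there are exactly two such classes $C_Q$ and $C_R$, so the operation either fixes each one or swaps them. It therefore suffices to test it on any single representative.

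I would take $s = d_1 = \left(\begin{smallmatrix}1 & 1\\ 0 & 1\end{smallmatrix}\right) \in C_Q$ (since $d_1 = I\cdot d_1\cdot I^{-1}$ and $\det I = 1$ is a square). A direct computation gives $\gamma d_1 - (\gamma-1) = \left(\begin{smallmatrix}1 & \gamma\\ 0 & 1\end{smallmatrix}\right)$, which is conjugate to $d_1$ in $GL_2(\F_q)$ via $h = \mathrm{diag}(\gamma,1)$, whose determinant is $\gamma \in R$. Any other conjugator $h'$ satisfies $h^{-1}h' \in Z_{GL_2(\F_q)}(d_1) = \left\{\left(\begin{smallmatrix}a & b\\ 0 & a\end{smallmatrix}\right) : a \ne 0\right\}$, whose determinants are the squares $a^2$; hence every conjugator has determinant in $\gamma \cdot Q = R$. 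By the definition of $C_Q, C_R$ in Proposition \ref{sln-nosplit}, this places $\gamma d_1 - (\gamma-1)$ in $C_R$, so the operation swaps the two classes. The conclusion $s \not\sim \gamma s - (\gamma-1)$ in $SL_2(\F_q)$ then follows for any reflection $s$.

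The only real subtlety is the centralizer computation identifying all conjugators of $d_1$ up to an element with square determinant; once that is in place, everything else is a direct matrix check.
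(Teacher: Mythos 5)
Your proof is correct and follows essentially the same route as the paper: observe that $F_\gamma(s)=\gamma s-(\gamma-1)$ is conjugation-equivariant, reduce to the representative $d_1$, compute $F_\gamma(d_1)=\left(\begin{smallmatrix}1&\gamma\\0&1\end{smallmatrix}\right)=d_\gamma^{-1}d_1d_\gamma$ with $\det d_\gamma^{-1}=\gamma\in R$, and conclude via the description of $C_Q,C_R$ in Proposition \ref{sln-nosplit}. Your extra centralizer computation (showing every conjugator of $d_1$ onto $F_\gamma(d_1)$ has non-square determinant) re-derives the disjointness of $C_Q$ and $C_R$, which Proposition \ref{sln-nosplit} already supplies, but it does no harm.

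One small step to tighten: from "$F_\gamma$ induces a well-defined map on the two classes" you conclude it "either fixes each one or swaps them," which silently excludes the possibility of a constant map; that exclusion requires knowing the induced map is a bijection. The paper secures this by noting $F_{\gamma^{-1}}$ is the inverse of $F_\gamma$ (and $\gamma^{-1}\in R$ as well). Alternatively, your own computation already handles both classes directly: for $s=gd_1g^{-1}$ one gets $F_\gamma(s)=(gd_\gamma^{-1})d_1(gd_\gamma^{-1})^{-1}$ with $\det(gd_\gamma^{-1})=\gamma\det g$, which lies in $R$ or $Q$ according as $\det g$ lies in $Q$ or $R$, so no appeal to "fix or swap" is needed.
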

\begin{proof}
The proof follows from Proposition \ref{sln-nosplit}. The map $F_{\gamma}: s\mapsto \gamma s - (\gamma - 1)$ maps reflections to reflections, and its inverse is $F_{\gamma^{-1}}$. So, it is enough to show it maps $s\in C_{Q}$ to an element of $C_{R}$.

Remember that $d_{\gamma}$, $\gamma\ne 0,1$ is the diagonal matrix with diagonal entries $\gamma^{-1}, 1$, and $d_1$ is a matrix with all generalized eigenvalues equal to $1$ and one Jordan block of size $2$. For $s=d_{1}$, we have $F_{\gamma}(d_{1})=d_{\gamma}^{-1}d_{1}d_{\gamma} \in C_{R}$. If $s$ is a conjugate in $SL_{2}(\F_{q})$ to $d_{1}$, say $s=hd_1h^{-1}$, then $F_{\gamma}(s)=hd_{\gamma}^{-1}d_{1}d_{\gamma}h^{-1}=(hd_{\gamma}^{-1}h^{-1})s(hd_{\gamma}^{-1}h^{-1})^{-1}\in C_{R}$.
\end{proof}

The following lemma is useful in computations, and is a stronger version of Lemma \ref{ntuplesum}. 

\begin{lemma}\label{quadressum}
If $d \equiv 0 \pmod{q-1}$, then $\sum_{i \in Q} i^d = \sum_{i \in R} i^d = \frac{q-1}{2}$.  If $d \equiv \frac{q-1}{2} \pmod{q-1}$, then $\sum_{i \in Q} i^d = -\sum_{i \in R} i^d = \frac{q-1}{2}$.  Otherwise, $\sum_{i \in Q} i^d = \sum_{i \in R} i^d = 0$.
\end{lemma}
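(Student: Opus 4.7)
The plan is to parametrize the sets $Q$ and $R$ using a generator of the cyclic group $\F_q^{\times}$ and then reduce each sum to a finite geometric progression. Note that the statement implicitly assumes $q$ is odd, since otherwise every nonzero element is a square and $R=\emptyset$; in the odd case $|Q|=|R|=(q-1)/2$.

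Let $g$ be a generator of $\F_q^{\times}$, so that $Q=\{g^{2k}\mid 0\le k<(q-1)/2\}$ and $R=\{g^{2k+1}\mid 0\le k<(q-1)/2\}$. Setting $\zeta:=g^{2d}$, I can write
\[
\sum_{i\in Q}i^d=\sum_{k=0}^{(q-1)/2-1}\zeta^{k},\qquad \sum_{i\in R}i^d=g^d\sum_{k=0}^{(q-1)/2-1}\zeta^{k}.
\]
Thus the problem reduces to computing a single geometric sum, and then multiplying by the scalar $g^d$ in the case of $R$.

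I will split into three cases according to the residue of $d$ modulo $q-1$. If $d\equiv 0\pmod{q-1}$, then $\zeta=1$ and each summand equals $1$, giving the value $(q-1)/2$ for both sums. If $d\equiv (q-1)/2\pmod{q-1}$, then $\zeta=g^{2d}=1$ again (since $2d\equiv 0\pmod{q-1}$) so the sum over $k$ still equals $(q-1)/2$, but now $g^d$ is a nontrivial square root of $1$ in $\F_q$, hence $g^d=-1$, producing $+\tfrac{q-1}{2}$ for $Q$ and $-\tfrac{q-1}{2}$ for $R$. In the remaining case $2d\not\equiv 0\pmod{q-1}$, so $\zeta\ne 1$ and the geometric sum equals $(1-\zeta^{(q-1)/2})/(1-\zeta)=(1-g^{d(q-1)})/(1-\zeta)=0$ by Fermat's little theorem, killing both sums.

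There is no serious obstacle here; the only delicate point is distinguishing the case $d\equiv (q-1)/2\pmod{q-1}$ from $d\equiv 0\pmod{q-1}$, both of which force $\zeta=1$, and observing that exactly in the former case the factor $g^d$ equals $-1$ rather than $+1$. This is what produces the sign difference between the sums over $Q$ and $R$ in the middle case of the lemma.
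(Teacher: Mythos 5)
Your proof is correct and follows essentially the same case analysis as the paper's: the two congruence cases are handled identically, via $i^d=\pm 1$ on squares and non-squares. The only difference is in the vanishing case, where the paper uses the invariance $S_Q=a^dS_Q$ under multiplication by a square $a$ with $a^d\ne 1$, while you sum an explicit geometric progression with ratio $g^{2d}\ne 1$; these are two forms of the same cyclicity argument, so no substantive divergence.
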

\begin{proof}
For this proof, let $S_Q = \sum_{i \in Q} i^d$ and $S_R = \sum_{i \in R} i^d$.  Suppose $d \equiv 0 \pmod{q-1}$.  Then, $i^d=1$ for all nonzero $i\in \F_q$,  and $S_R = S_Q = |Q| = |R| = \frac{q-1}{2}$.  Suppose $d \equiv \frac{q-1}{2} \pmod{q-1}$. In that case, if $i \in Q$, $i^d = 1$, and if $i \in R$, then $i^d = -1$. Thus, $S_Q = -S_R = \frac{q-1}{2}$. Suppose neither holds.  For any $a \in Q$ it is easy to see that $aR=R$ and $aQ=Q$, so $S_R = a^d S_R$ and $S_Q = a^d S_Q$. If $a$ is a multiplicative generator of the cyclic multiplicative group $\F_q^{\times}$, then $1$ and  $a^{d}$ are different elements of $Q$, so $(1-a^{d})S_{Q}=(1-a^{d})S_{R}=0$ implies that $S_R = S_Q = 0$.
\end{proof}

Next, we parametrize reflections in each conjugacy class. Remember the notation from Proposition \ref{reflequiv}: unipotent reflections are identified with all $\alpha\otimes \alpha^{\vee} \in \h^*\otimes \h$ such that $(\alpha, \alpha^{\vee})=0,$ in such a way that the action of  a reflection $s$ on $x\in \h^*$ and $y\in \h$ is
$$s.x=x-(\alpha^{\vee},x)\alpha$$
$$s.y=y+(\alpha,y)\alpha^{\vee}.$$

\begin{lemma}\label{sln-param}
The conjugacy classes $C_Q$ and $C_R$ of reflections in $SL_2(\F_{p^r})$, $p>2$, are parametrized through $\alpha\otimes \alpha^{\vee}$ as
$$C_{Q}=\left\{ \gamma \left[ \begin{array}{c} 1 \\ a \end{array} \right] \otimes \left[ \begin{array}{c} a \\ -1 \end{array} \right] \mid a \in \F_q, \, \gamma \in Q\right\} \bigcup \left\{ \gamma \left[ \begin{array}{c} 0\\1 \end{array}\right] \otimes \left[ \begin{array}{c} 1 \\0 \end{array} \right] \mid \gamma \in Q \right\},$$
$$C_{R}=\left\{ \gamma \left[ \begin{array}{c} 1 \\ a \end{array} \right] \otimes \left[ \begin{array}{c} a \\ -1 \end{array} \right] \mid a \in \F_q, \, \gamma \in R \right\} \bigcup \left\{ \gamma \left[ \begin{array}{c} 0\\1 \end{array}\right] \otimes \left[ \begin{array}{c} 1 \\0 \end{array} \right] \mid \gamma \in R \right\}.$$
\end{lemma}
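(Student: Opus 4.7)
The plan is to use Proposition \ref{reflequiv}, which identifies unipotent reflections with nonzero tensors $\alpha\otimes\alpha^\vee\in\h_\F^*\otimes\h_\F$ satisfying $(\alpha,\alpha^\vee)=0$, and then track this tensor under conjugation by $GL_2(\F_q)$. The underlying fact is that if $s$ corresponds to $\alpha\otimes\alpha^\vee$, then $gsg^{-1}$ corresponds to $(g\ldot\alpha)\otimes(g\ldot\alpha^\vee)$, which follows from a one-line computation using $(g\ldot y,x)=(y,g^{-1}\ldot x)$.

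First I would pin down the tensor attached to the base reflection $d_1$. Using that $g$ acts on $\h^*$ by the matrix $(g^{-1})^T$, one reads off $(1-d_1)\ldot x_1=x_2$ and $(1-d_1)\ldot x_2=0$, so $d_1\leftrightarrow x_2\otimes y_1$. This is the $\gamma=1$ case of the second family in the claimed $C_Q$, consistent with $d_1\in C_Q$ as fixed in Proposition \ref{sln-nosplit}.

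Next, for $g=\begin{pmatrix}a&b\\c&d\end{pmatrix}\in GL_2(\F_q)$ with $\Delta=\det g$, I would compute
$$(g\ldot x_2)\otimes(g\ldot y_1)=\tfrac{1}{\Delta}(-cx_1+ax_2)\otimes(ay_1+cy_2).$$
When $c\neq 0$, rescaling each factor by $-1/c$ (which multiplies the tensor as a whole by $c^2$) rewrites this as $\tfrac{c^2}{\Delta}(x_1+\beta x_2)\otimes(\beta y_1-y_2)$ with $\beta=-a/c\in\F_q$. When $c=0$ the tensor is already $\tfrac{a^2}{\Delta}\,x_2\otimes y_1$ with $a,d\neq 0$. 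In both branches the overall scalar $\gamma$ equals a nonzero square divided by $\Delta$, so $\gamma\in Q$ iff $\Delta\in Q$, which is exactly the discriminator between $C_Q$ and $C_R$ coming from Proposition \ref{sln-nosplit}.

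To conclude I would verify surjectivity and disjointness: as $g$ varies with prescribed $\det$-class, the parameter $\beta=-a/c$ sweeps all of $\F_q$ and $\gamma$ sweeps all of $Q$ (respectively $R$), and the two families in the lemma do not meet because tensors of the first type have first factor not proportional to $x_2$. A size check (each family contains $\tfrac{q-1}{2}(q+1)=\tfrac{q^2-1}{2}$ elements, totalling $q^2-1$) matches the reflection count of Lemma \ref{conjclassGLn}, so nothing is missed. The only genuine obstacle is bookkeeping: one must keep the actions on $\h$ and on $\h^*$ straight and notice that the rescaling needed to reach the lemma's normal form always produces a \emph{square} factor, so the $Q/R$ partition is preserved.
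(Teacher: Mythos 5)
Your proof is correct and follows essentially the same route as the paper's: both identify $d_1$ with $x_2\otimes y_1$ via Proposition \ref{reflequiv}, compute that conjugation by $g$ sends this tensor to $\frac{1}{\det g}(-cx_1+ax_2)\otimes(ay_1+cy_2)$, and then normalize to the stated form while checking that the normalizing scalar differs from $(\det g)^{-1}$ by a nonzero square. The only cosmetic difference is that the paper normalizes $g$ itself by an element of the centralizer of $d_1$ (whose determinant is automatically a square), whereas you rescale the two tensor factors directly; the bookkeeping is the same.
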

\begin{proof}
The proof is straightforward. The reflection $d_{1}$ is identified with $\left[ \begin{array}{c} 0 \\ 1 \end{array} \right] \otimes \left[ \begin{array}{c} 1 \\ 0 \end{array} \right] $. Conjugating it by $g = \left[ \begin{array}{cc} a&b\\c&d\end{array} \right] \in GL_{2}(\F_q)$ gives us a reflection $gd_1g^{-1}$ identified with $$\frac{1}{\det g} \left[ \begin{array}{c} -c \\ a \end{array} \right] \otimes \left[ \begin{array}{c} a \\ c \end{array} \right].$$
As $g$ can always be scaled by an element of the centralizer of $d_1$ so that either $c=-1$ or $c=0$ and $a=1$, and that $gd_1g^{-1}$ is in $C_Q$ or $C_R$ depending whether $\det g$ is in $Q$ or $R$, the description follows. 
\end{proof}

\subsection{Description of $L_{0,c}(\triv)$ for $SL_n(\F_{p^r})$}

\begin{thm}\label{slnt0} Characters of the irreducible modules  $L_{0,c}(\triv)$ for the rational Cherednik algebra $H_{0,c}(SL_n(\F_q,\h))$ are:

\begin{tabular}{|c|c|c|c|}
\hline
$(q,n)$ & $c$ & $\chi_{L_{0,c}(\triv)}(z)$ & $\mathrm{Hilb}_{L_{0,c}(\triv)}(z)$ \\ \hline \hline
$(q,n)\ne (2,2)$ & any & $[\triv]$ & $1$ \\ \hline
$(2,2)$ & $0$ & $[\triv]$ & $1$ \\ \hline
$(2,2)$ & $c\ne0$ & $[\triv]+[\h^*]z+([S^2\h^*]-[\triv])z^2+$ & $1+2z+2z^2+z^3$\\ 
& & $+([S^3\h^*]-[\h^*]-[\triv])z^3$ & \\ \hline
\end{tabular}
\end{thm}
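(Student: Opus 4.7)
The strategy is to split into cases according to the conjugacy class structure of reflections in $SL_n(\F_q)$ given by Proposition \ref{sln-nosplit}, and in each case to show that every $x \in \h^*$ is singular in $M_{0,c}(\triv)$. Once this is established, the $G$-submodule $\h^* \otimes \triv = M_{0,c}(\triv)_1$ lies in the kernel of the contravariant form and generates the irrelevant ideal of $S\h^*$ inside $J_{0,c}(\triv)$, forcing $L_{0,c}(\triv) = [\triv]$ and giving the claimed formula whenever $(n,q) \neq (2,2)$.

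The case $(n,q) = (2,2)$ is immediate from Theorem \ref{glnt0}: $\F_2^\times = \{1\}$ forces $SL_2(\F_2) = GL_2(\F_2)$, so the two theorems coincide. When $n \geq 3$, or when $n = 2$ with $p=2$ and $q > 2$, Proposition \ref{sln-nosplit} gives a single conjugacy class of reflections in $SL_n(\F_q)$, which coincides with the unipotent class $C_1$ of $GL_n(\F_q)$. Extending the parameter $c$ of $SL_n$ to a parameter $c'$ of $GL_n$ by zero on the semisimple classes $C_\lambda$, the Dunkl operators on $M_{0,c}(\triv) \cong M_{0,c'}(\triv) \cong S\h^*$ for the two rational Cherednik algebras coincide term by term. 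The $t=0$ analogue of Lemma \ref{normalsubgp}, whose proof carries over verbatim (it makes no essential use of $t=1$), combined with Theorem \ref{glnt0} then yields $L_{0,c}(\triv) = [\triv]$.

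The remaining case is $n = 2$ with $p$ odd (so $q \geq 3$, hence automatically $(n,q) \neq (2,2)$). Here Proposition \ref{sln-nosplit} produces two conjugacy classes $C_Q, C_R$ with independent parameters $c_Q, c_R$, so no reduction to $GL_n$ is available. I would compute $D_y(x) = -\sum_s c_s (y,\alpha_s)(\alpha_s^\vee, x)$ directly using the parametrization of Lemma \ref{sln-param}. The key structural feature is that each of $C_Q, C_R$ is parametrized by $\F_q$ together with a scalar $\gamma$ in $Q$ or $R$, so the coefficient of $c_Q$ in $D_y(x)$ factors as $-\bigl(\sum_{\gamma \in Q}\gamma\bigr)$ times a bracket consisting of sums over $a \in \F_q$, and symmetrically for $c_R$. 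For $q \geq 5$, Lemma \ref{quadressum} applied with $d=1$ gives $\sum_{\gamma \in Q}\gamma = \sum_{\gamma \in R}\gamma = 0$, killing both coefficients. For the edge case $q = 3$ (where those outer scalars are nonzero), one checks by direct expansion that the inner $a$-sum itself vanishes, using $q = 0$, $\sum_{a \in \F_3} a = 0$, and $\sum_{a \in \F_3} a^2 = -1$ in $\F_3$.

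The main obstacle is the bookkeeping in the $n=2$, $p$ odd case, and in particular the boundary value $q = 3$: Lemma \ref{quadressum} provides no cancellation of the outer sums over $Q$ and $R$ there, so the vanishing must be extracted from the specific structure of the inner $a$-sum. Everything else reduces cleanly to Theorem \ref{glnt0}.
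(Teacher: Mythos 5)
Your proposal is correct and follows essentially the same route as the paper: reduction to Theorem \ref{glnt0} via Proposition \ref{sln-nosplit} and (the $t=0$ version of) Lemma \ref{normalsubgp} when there is a single reflection class, and a direct Dunkl-operator computation with the parametrization of Lemma \ref{sln-param} plus Lemma \ref{quadressum} for $n=2$, $p$ odd, with the $q=3$ boundary case checked by hand. The only cosmetic difference is that the paper reduces the vanishing for $C_R$ to that for $C_Q$ using the already-known vanishing of the sum over $C_Q\cup C_R$ from the $GL_2$ case, whereas you treat the two classes symmetrically; both work.
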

\begin{proof}
The group $SL_{n}(\F_q)$ is a normal subgroup of $GL_n(F_q)$, and for $n\ge 3$ or $p=2$, by Proposition \ref{sln-nosplit} it satisfies the conditions of Lemma \ref{normalsubgp}. Thus, in those cases, irreducible representations of $H_{t,c}(SL_{n}(\F_q),\h)$ have the same characters as irreducible representations of $H_{t,c}(GL_{n}(\F_q),\h)$, where $c$ is extended to conjugacy classes of reflections in $GL_{n}(\F_q)-SL_{n}(\F_q) $ by zero. So, in those cases we can deduce character formulas for $L_{0,c}(\triv)$ from Theorem \ref{glnt0}. 

The remaining case is $SL_2(\F_q)$, $p\ne 2$, for which we claim that $D_y(x)=0$ for all $x$ and $y$, and so the character is trivial. We have 
$$D_y(x)=-c_R\sum_{\alpha\otimes \alpha^{\vee}\in C_R}(y,\alpha)(x,\alpha^{\vee})-c_Q\sum_{\alpha\otimes \alpha^{\vee}\in C_Q}(y,\alpha)(x,\alpha^{\vee}), $$ so it is enough to show that for $T=Q$ or $T=R$, $$\sum_{\alpha\otimes \alpha^{\vee}\in C_T}\alpha\otimes \alpha^{\vee}$$ is zero. 

We know from the proof of Theorem \ref{glnt0} that $\sum_{\alpha\otimes \alpha^{\vee}\in C_{R\cup Q}}\alpha\otimes \alpha^{\vee}=0$, so it is enough to prove that the sum over $C_Q$ is zero. For this, let us calculate, using parametrization from Lemma \ref{sln-param}:
\begin{eqnarray*} \sum_{\alpha\otimes \alpha^{\vee}\in C_Q}\alpha\otimes \alpha^{\vee}&=&\sum_{\gamma \in Q} \sum_{a\in \F_q} \gamma (x_1+ax_2)\otimes (ay_1-y_2) + \sum_{\gamma \in Q} \gamma x_2\otimes y_1\\
\end{eqnarray*}
By Lemma \ref{quadressum}, if $q\ne 3$, then $\sum_{\gamma \in Q}\gamma=0$ and the whole sum is zero, as claimed. For $q=3$, $Q=\{ 1\}$, so the sum is equal to
$$  \sum_{a\in\F_3} \left( -x_1 \otimes y_2-ax_2\otimes y_2+ax_1\otimes y_1+a^2x_2\otimes y_1 \right)+ x_2\otimes y_1=$$
$$=-x_2\otimes y_1+x_2\otimes y_1=0.$$
\end{proof}

\subsection{Description of $L_{1,c}(\triv)$ for $SL_n(\F_{p^r})$ if $n \ge 3$ or $p=2$}

As explained above and demonstrated in the case of $t=0$, we can get character formulas for $H_{1,c}(SL_n(\F_q),\h)$ directly from the ones for $H_{1,c}(GL_n(\F_q),\h)$ when $n\ge 3$ or $p=2$. The following is a corollary of Lemma \ref{normalsubgp}, Proposition \ref{sln-nosplit}, and results from Section \ref{glnsec}, most notably \ref{main}.

\begin{cor}\label{sl1}
Let $n \geq 3$ or $p = 2$. Consider the rational Cherednik algebra $H_{1,c}(SL_n(\F_q),\h)$, its representations $M_{1,c}(\mathrm{triv})$, the contravariant form $B$ on it, and the irreducible quotient $L_{1,c}(\mathrm{triv})$. Then all the results proved previously for the group $GL_n(\F_q)$ hold also for $SL_n(\F_q)$. Specifically, 
\begin{itemize}
\item[a)] $D_y(x^p) = 0$ in $M_{1,c}(\mathrm{triv})$. 
\item[b)] The form $B$ on $M_{1,c}(\mathrm{triv})$ is zero in degrees $np - n + 1$ and higher. 
\item[c)] The matrices of the form $B$ on $M_{1,c}(\mathrm{triv})$ in lexicographically ordered monomial bases are diagonal in all degrees.
\item[d)] All diagonal elements of the matrix of the form $B_i$ on any graded piece $M_{1,c}(\mathrm{triv})_i$ are $\Bbbk$-multiples of the same polynomial in $c$. 
\item[e)] If $(q,n) \ne (2,3)$, $(q,n) \ne (2,2)$ the matrices $B_i$ of the form on $M_{1,c}(\mathrm{triv})_i$ do not depend on $c$.
\item[f)] If $(q,n) = (2,3)$, only the matrices $B_2$ and $B_3$ depend on $c$. Their nonzero diagonal coefficients are constant multiples of $c+1$.
\item[g)] If $(q,n) = (2,2)$, then $GL_{n}(\F_q)=SL_{n}(\F_q)$ so the character formulas are the same.
\item[h)]The character formulas for $L_{1,c}(\triv)$ for the rational Cherednik algebra $H_{1,c}(SL_n(\F_q),\h)$ are the same as for the rational Cherednik algebra $H_{1,c}(GL_n(\F_q),\h)$, with the parameter function $c$ extended to all classes of reflections in $GL_n(\F_q)-SL_n(\F_q)$ by zero. 
\end{itemize}
\end{cor}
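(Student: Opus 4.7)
The plan is to derive the corollary as an essentially immediate consequence of Lemma \ref{normalsubgp} together with Proposition \ref{sln-nosplit}. The key observation is that when $n \geq 3$ or $p = 2$, Proposition \ref{sln-nosplit} verifies both hypotheses of Lemma \ref{normalsubgp}: the subgroup $SL_n(\F_q) \trianglelefteq GL_n(\F_q)$ is normal, and two reflections in $SL_n(\F_q)$ (which are all unipotent) are conjugate in $SL_n(\F_q)$ iff they are conjugate in $GL_n(\F_q)$. Moreover, $\triv$ restricted to $SL_n(\F_q)$ remains the (irreducible) trivial representation. Thus, extending the single parameter $c$ associated to the unique unipotent class of $SL_n(\F_q)$ to a parameter on $GL_n(\F_q)$ by declaring it zero on all semisimple classes, Lemma \ref{normalsubgp} gives a canonical isomorphism
\[
L_{1,c}(SL_n(\F_q),\h,\triv) \cong L_{1,c}(GL_n(\F_q),\h,\triv)\big|_{H_{1,c}(SL_n(\F_q),\h)}.
\]

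Next, I would observe (as in the proof of Lemma \ref{normalsubgp}) that the Verma modules $M_{1,c}(SL_n(\F_q),\h,\triv)$ and $M_{1,c}(GL_n(\F_q),\h,\triv)$ are canonically identified with $S\h^*$ as graded vector spaces, and under this identification the Dunkl operators coincide: the only reflections contributing to $D_y$ are those with nonzero $c$, and these are exactly the unipotent reflections, which form a single conjugacy class in both groups. Consequently the contravariant form $B$, its matrices $B_i$ in any monomial basis, and their dependence on $c$ are literally identical to the $GL_n(\F_q)$-case. This immediately yields (a) from Proposition \ref{gln-kernel}, (b) from Corollary \ref{corBzero}, (c) from Proposition \ref{gln-diag}, (d) from Proposition \ref{gln-multiples}, and (e), (f) from Proposition \ref{gln-cdep} (noting that in the $GL_n$ analysis, dependence on $c$ only occurred for the unipotent-class parameter $c_1$ in the regimes $(q,n)=(p,2)$ and $(q,n)=(2,3)$; the case $(p,2)$ falls outside our hypothesis, leaving only $(q,n)=(2,3)$ with the claimed coefficient $c+1$). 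Part (g) is tautological since $SL_2(\F_2) = GL_2(\F_2)$, and part (h) collects these observations together with Theorem \ref{main}.

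There is no substantive obstacle: once the hypotheses of Lemma \ref{normalsubgp} are checked via Proposition \ref{sln-nosplit}, everything is a transfer of previously established facts. The only mild care needed is in part (e), where one must confirm that none of the excluded cases from the $GL_n$ analysis reappear as genuinely new $SL_n$ phenomena — specifically, that under the extension-by-zero of $c$, the exceptional dependencies in Proposition \ref{gln-cdep} at $(p,2)$ are correctly excluded (indeed, this case is precisely where Proposition \ref{sln-nosplit} fails and is treated separately in the next subsection), and that $(q,n)=(2,3)$ is genuinely covered by our hypothesis. With that bookkeeping dispatched, the corollary follows.
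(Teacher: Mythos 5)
Your proposal is correct and follows exactly the route the paper takes: Proposition \ref{sln-nosplit} verifies the hypotheses of Lemma \ref{normalsubgp}, the Dunkl operators (hence the form $B$) are then literally identical to the $GL_n(\F_q)$ case with $c$ extended by zero on semisimple classes, and each part is read off from the corresponding $GL_n$ result. The bookkeeping you do for parts (e)--(g) matches the paper's intent, so there is nothing to add.
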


\subsection{Description of $L_{1,c}(\triv)$ for $SL_n(\F_{p^r})$ if $n = 2$ and $p > 2$}

As in the case of $t=0$, we need to study the case $n = 2$ and $p > 2$, when there are two conjugacy classes of reflections in $SL_n(\F_q)$, separately. The case $q=3$ is the most complicated and we solve it by calculating the matrices of the form $B$ explicitly. The following results address the remaining cases.

\begin{prop}\label{sln-n2-kernel}
Let $n=2$, $q=p^r$ for $p$ an odd prime, and $q\ne 3$. In the Verma module $M_{1,c}(\mathrm{triv})$ for $H_{1,c}(SL_{2}(\F_q))$, all the vectors $x^p, x\in \h^*$, are singular.
\end{prop}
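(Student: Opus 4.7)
The plan is to mimic the proof of Proposition \ref{gln-kernel} but track the two conjugacy classes $C_Q$ and $C_R$ separately, exploiting the fact that each of them is obtained from a ``base'' set of reflections by scaling the corresponding tensor $\alpha \otimes \alpha^\vee$ by elements of $T \in \{Q,R\}$.

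First, since $\partial_y(x^p) = p\, x^{p-1}\partial_y(x) = 0$ in characteristic $p$, we have
$$D_y(x^p) \;=\; -\sum_{T \in \{Q,R\}} c_T \sum_{s\in C_T} (\alpha_s,y)\frac{1}{\alpha_s}\bigl((\alpha_s^\vee,x)\alpha_s\bigr)^p,$$
so it suffices to show that for each $T \in \{Q,R\}$ the inner sum vanishes. Using Lemma \ref{sln-param}, parametrize each $s \in C_T$ by a pair $(\gamma,(\beta,\beta^\vee))$ with $\gamma \in T$ and $(\beta,\beta^\vee)$ ranging over the ``base'' pairs $\{(x_1+ax_2,\,ay_1-y_2) : a\in\F_q\} \cup \{(x_2,y_1)\}$, corresponding to the tensor $\gamma\,\beta\otimes\beta^\vee$. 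Then $\alpha_s=\gamma\beta$, $\alpha_s^\vee=\beta^\vee$, and $(1-s)\cdot x = \gamma(\beta^\vee,x)\beta$, so a direct computation gives
$$(\alpha_s,y)\frac{1}{\alpha_s}\bigl((\alpha_s^\vee,x)\alpha_s\bigr)^p \;=\; \gamma^{p}\,(\beta,y)(\beta^\vee,x)^{p}\,\beta^{p-1}.$$

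The factor $\gamma^p$ is independent of $(\beta,\beta^\vee)$, so the inner sum factors as
$$\sum_{s \in C_T} (\alpha_s,y)\frac{1}{\alpha_s}\bigl((\alpha_s^\vee,x)\alpha_s\bigr)^p \;=\; \Bigl(\sum_{\gamma \in T}\gamma^{p}\Bigr) \cdot F(x,y),$$
where $F(x,y)=(x_2,y)(y_1,x)^{p}x_2^{p-1}+\sum_{a\in\F_q}(x_1+ax_2,y)(ay_1-y_2,x)^{p}(x_1+ax_2)^{p-1}$ depends only on $x,y$ and not on $T$. It therefore suffices to show that $\sum_{\gamma \in T}\gamma^{p} = 0$ for each $T$.

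Apply Lemma \ref{quadressum} with $d=p$. For $q=p$ odd, $p\equiv 1 \pmod{q-1}$, and $1 \not\equiv 0$ and $1 \not\equiv (p-1)/2 \pmod{p-1}$ provided $p>3$; for $q=p^r$ with $r\ge 2$ and $p$ odd, one has $0<p<(q-1)/2<q-1$, so $p$ is neither $\equiv 0$ nor $\equiv (q-1)/2$ modulo $q-1$. In either case Lemma \ref{quadressum} yields $\sum_{\gamma\in Q}\gamma^p = \sum_{\gamma\in R}\gamma^p = 0$, and hence $D_y(x^p)=0$. The excluded case $q=3$ fails precisely because then $p=3$ lies in the ``special'' residue class modulo $q-1=2$, which is the main reason for that hypothesis.

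The only real obstacle is bookkeeping: one must be careful that the parametrization in Lemma \ref{sln-param} genuinely produces each reflection exactly once (so the counts $(q^2-1)/2$ match) and that rescaling of the tensor $\alpha\otimes\alpha^\vee$ by $\gamma$ is tracked correctly through both $\alpha_s$ in the denominator and $((1-s)\cdot x)^p$. Once this is set up, the vanishing reduces immediately to Lemma \ref{quadressum}.
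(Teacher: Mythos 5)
Your proof is correct and follows essentially the same route as the paper: both reduce the vanishing of the coefficient of $c_T$ in $D_y(x^p)$ to the identity $\sum_{\gamma\in T}\gamma^p=0$, supplied by Lemma \ref{quadressum} exactly when $q\ne 2,3$. The only cosmetic difference is that you factor the scalar $\gamma$ out of the whole tensor via Lemma \ref{sln-param} and treat $C_Q$ and $C_R$ symmetrically, whereas the paper first reduces to $C_Q$ using the $GL_2$ result and then fixes $\alpha$ by a coordinate change.
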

\begin{proof}
We need to show that for every conjugacy class $C_T$, for $T=R$ or $T=Q$, and any $x,y$, the coefficient in $D_y(x^p)$ of $c_T$ is zero. This coefficient is equal to
$$\sum_{\alpha\otimes\alpha^{\vee}\in C_T}(y,\alpha)(x,\alpha^{\vee})^p\alpha^{p-1}.$$ Again, as $C_R\cup C_Q=C_1$ is a conjugacy class of reflections in $GL_2(\F_q)$, and the result holds there, it is enough to show this for $C_Q$. 

As in the proof of Proposition \ref{gln-kernel}, we claim that after writing it as a double sum, with the outer sum being over $\alpha$ and the inner over $\alpha^{\vee}$, the inner sum is zero for any $\alpha$. Fix $\alpha$ and change coordinates, so that we assume without loss of generality that $\alpha=x_1$. Then the inner sum is 
$$\sum_{\substack{\alpha^{\vee}=\gamma y_2 \\ \gamma \in Q}}(x,\alpha^{\vee})^p =(y_2,x)^{p}\sum_{\gamma\in Q}\gamma^p. $$

Using Lemma \ref{quadressum}, this is zero unless $p\equiv 0,\frac{q-1}{2} \pmod{q-1}$. However, this only happens in cases we excluded: $q=2$ and $q=3$.
\end{proof}

Next, we prove that acting by Dunkl operator produces elements of $M_{1,c}(\mathrm{triv})$ of a specific form. 

\begin{lemma}
Let $n=2$, $q=p^r$ for $p$ an odd prime, and $q\ne 3$, and consider the Verma module $M_{1,c}(\mathrm{triv})$ for $H_{1,c}(SL_{n}(\F_q))$. 
For any $f \in M_{1,c}(\mathrm{triv})/\left< x_i^p \right>\cong S\h^*/\left< x_i^p \right> $ and any $y \in \h$, there exists $h\in S\h^*$ such that, as elements of $M_{1,c}(\mathrm{triv})/\left< x_i^p \right>$,
$$D_y(f) =\partial_{y}f+  c_Q \cdot h+ c_R \cdot h $$
\end{lemma}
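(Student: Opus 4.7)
The plan is to decompose $D_y(f) = \partial_y f - c_Q T_Q(y, f) - c_R T_R(y, f)$, where $T_X(y, f) := \sum_{s \in C_X}\frac{(y, \alpha_s)}{\alpha_s}(1 - s)f$ for $X \in \{Q, R\}$, and then prove the stronger claim that $T_Q(y, f) \equiv T_R(y, f) \pmod{\langle x_1^p, x_2^p\rangle}$. The lemma then follows with $h := -T_Q(y, f) \in S\h^*$, since $c_Q h$ and $c_R h$ then agree with the actual coefficients modulo the ideal.

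Using Lemma~\ref{sln-param}, write each reflection in $C_X$ as $\gamma \cdot (\alpha \otimes \alpha^\vee)$ with $\gamma \in X$ and $(\alpha, \alpha^\vee)$ ranging over the set $B$ of $q + 1$ representatives (the $q$ ``generic'' directions $(x_1 + ax_2,\, ay_1 - y_2)$ for $a \in \F_q$, and the single ``exceptional'' direction $(x_2, y_1)$). The factor $(y, \alpha_s)/\alpha_s$ does not depend on $\gamma$. Expanding $(1 - s_\gamma)f = -\sum_{m \ge 1}\gamma^m F_m(\alpha, \alpha^\vee; f)$ with the explicit formula
\[
F_m(\alpha, \alpha^\vee; x_1^j x_2^k) = (-1)^m \alpha^m \sum_{u+v=m}\binom{j}{u}\binom{k}{v}(\alpha^\vee, x_1)^u (\alpha^\vee, x_2)^v x_1^{j-u} x_2^{k-v},
\]
one obtains $T_Q - T_R = -\sum_{(\alpha, \alpha^\vee) \in B}\tfrac{(y, \alpha)}{\alpha}\sum_{m \ge 1}(S_Q(m) - S_R(m))F_m$, where $S_X(m) := \sum_{\gamma \in X}\gamma^m$. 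By Lemma~\ref{quadressum}, $S_Q(m) - S_R(m)$ vanishes unless $m \equiv (q-1)/2 \pmod{q - 1}$, so it suffices to show that for each such $m \ge 1$ and each monomial $f = x_1^j x_2^k$ with $j, k < p$,
\[
U_m(f) := \sum_{(\alpha, \alpha^\vee) \in B}\tfrac{(y, \alpha)}{\alpha}F_m(\alpha, \alpha^\vee; f) \in \langle x_1^p, x_2^p\rangle.
\]

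When $m \ge p + 1$, this is essentially automatic: the factor $\alpha^{m-1}$ appearing in $\frac{(y, \alpha)}{\alpha}F_m$ lies in $\langle x_1^p, x_2^p\rangle$ by Frobenius (since $\alpha^p = x_1^p + a^p x_2^p$ or $\alpha^p = x_2^p$), so the generic-direction terms are in the ideal; the exceptional direction $(x_2, y_1)$ forces $v = 0$, hence $u = m \ge p + 1 > j$, making $\binom{j}{m}$ vanish. Every relevant $m$ satisfies $m \ge p + 1$ except the single case $r = 1$, $m = (p-1)/2$: indeed $(q-1)/2 \ge (p^2 - 1)/2 \ge p + 1$ for $r \ge 2$ and $p \ge 3$, and for $r = 1$ the next admissible value $(3p-3)/2$ is $\ge p + 1$ precisely when $p \ge 5$ (note $p = r = 1$ would force $q = 3$, excluded).

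The residual case $q = p \ge 5$, $m = (p-1)/2$ requires an explicit cancellation between the two types of directions in $B$. The total $a$-degree of the generic-type summand is bounded by $1 + (m-1) + \min(j, m) \le 2m = p - 1$, so by Lemma~\ref{ntuplesum} the sum over $a \in \F_p$ retains only the coefficient of $a^{p-1}$. This coefficient is attained uniquely by extracting the $aB$ term from $(A + aB)$, the extremal $a^{m-1}x_2^{m-1}$ term from $(x_1 + ax_2)^{m-1}$, and $(u, v) = (m, 0)$; using $\sum_{a \in \F_p}a^{p-1} = -1$, this yields a generic-type contribution of $(-1)^{m+1}\binom{j}{m}(y, x_2)\,x_1^{j-m}x_2^{k + m - 1}$. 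A direct evaluation of the exceptional-type contribution at $(x_2, y_1)$ gives $(-1)^m \binom{j}{m}(y, x_2)\,x_1^{j-m}x_2^{k+m-1}$, and these cancel monomial by monomial, yielding $U_m(f) = 0$. The main obstacle is precisely this cancellation: it is not forced by degree or Frobenius alone but relies on the single exceptional direction $(x_2, y_1) \in B$ contributing exactly the complementary term to the unique surviving coefficient from the $q = p$ generic directions. Everything else is bookkeeping with Lemmas~\ref{quadressum} and \ref{ntuplesum}.
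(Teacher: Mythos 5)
Your proof is correct, and I verified the residual computation: the generic-direction coefficient of $a^{p-1}$ is indeed $(-1)^m\binom{j}{m}(y,x_2)x_1^{j-m}x_2^{k+m-1}$, which after $\sum_a a^{p-1}=-1$ exactly cancels the exceptional-direction term. However, your organization is genuinely different from the paper's. The paper also reduces to comparing the $C_Q$- and $C_R$-sums via the parametrization of Lemma~\ref{sln-param}, but it first fixes $y=y_1$ (appealing to symmetry and linearity for general $y$), which makes the exceptional family $\gamma\, x_2\otimes y_1$ drop out entirely since $(\alpha_s,y_1)=0$; it then sums over $a$ first, using Lemma~\ref{ntuplesum} to restrict to $i+k\in\{0,\,q-1,\,2(q-1)\}$, and only afterwards over $\gamma$ to restrict to $m\in\{\tfrac{q-1}{2},\tfrac{3(q-1)}{2}\}$, killing each surviving case by a binomial-coefficient-mod-$p$ or degree argument. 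You reverse the order of summation, applying Lemma~\ref{quadressum} first to isolate $m\equiv\tfrac{q-1}{2}\pmod{q-1}$, and you introduce a Frobenius shortcut ($\alpha^{m-1}=(x_1^p+a^px_2^p)\alpha^{m-1-p}\in\langle x_1^p,x_2^p\rangle$ once $m\ge p+1$) that the paper does not use and that collapses all but the single case $q=p$, $m=\tfrac{p-1}{2}$. The trade-off: your route keeps $y$ arbitrary and avoids the paper's unstated symmetry step, with far fewer cases, but it must carry the exceptional direction throughout, and the final cancellation between the $q$ generic directions and the one exceptional direction is a computation with no counterpart in the paper (where, with $y=y_1$, both contributions vanish separately). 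One cosmetic slip: ``$p=r=1$ would force $q=3$'' should read ``$p=3$, $r=1$''; the surrounding logic is right.
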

\begin{proof}
The Dunkl operator action in the case of $SL_{2}(\F_q)$ is
$$D_{y}(f)=\partial_{y}(f)-\sum_{T\in\{Q,R\} } c_{T}\sum_{s\in C_T}\frac{(\alpha_{s},y)}{\alpha_s}(1-s).f$$ The strategy is to compute the sum $\sum_{s\in C_T}\frac{(\alpha_{s},y)}{\alpha_s}(1-s).(f)$ parallelly  for $T=Q,R$, disregarding all terms that do not depend on the choice of $T$ (these contribute equally to the coefficient of $c_Q$ and $c_R$), and any elements of the ideal $\left< x_i^p \right>$. We will use:
\begin{eqnarray}
\sum_{a\in\F_q}a^m=0 & \textrm{unless} & m\equiv 0 \pmod{q-1}, m\ne 0 \label{1}\\
%\sum_{a\in\F_q}a^{q-1}&=&q-1  \\
\sum_{\gamma \in Q}\gamma ^m=\sum_{ \gamma\in R}\gamma^m & \textrm{unless} & m\equiv \frac{q-1}{2} \pmod{q-1} \label{2} \\
%\sum_{d\in Q}d^{m}=\sum_{d\in R}d^m & \textrm{if} & m\equiv \frac{q-1}{2} \pmod{q-1}\\
x_i^p=0 & \textrm{in} & M_{1,c}(\mathrm{triv})/\left< x_i^p \right> \label{3}
\end{eqnarray}
and the parametrization of conjugacy classes from \ref{sln-param}:
\begin{equation}
C_{T}=\left\{ \gamma \left[ \begin{array}{c} 1 \\ a \end{array} \right] \otimes \left[ \begin{array}{c} a \\ -1 \end{array} \right] \mid a \in \F_q, \, \gamma \in T\right\} \bigcup \left\{ \gamma \left[ \begin{array}{c} 0\\1 \end{array}\right] \otimes \left[ \begin{array}{c} 1 \\0 \end{array} \right] \mid \gamma \in T \right\} .
\label{4}
\end{equation}

The rest of the proof is this computation.

We will do it for $D_y(f)$, $y=y_1$, $f=x_1^ux_2^v$. The general statement follows from this case by symmetry and linearity. We can assume $u,v \leq p-1$. 

We claim that the sum
\begin{equation*} \tag{$\star$}
\sum_{s\in C_T}\frac{(\alpha_{s},y_1)}{\alpha_s}(x_1^ux_2^v-(s.x_1)^u(s.x_2)^v)
\end{equation*}
does not depend on $T$.

Reflections $s$ corresponding to elements of the form $\gamma \left[ \begin{array}{c} 0\\1 \end{array}\right] \otimes \left[ \begin{array}{c} 1 \\0 \end{array} \right] $ satisfy $(\alpha_{s},y_1)=(\gamma x_2,y_1)=0$, so they do not contribute to the sum. 

For $s$ of the form $\gamma \left[ \begin{array}{c} 1 \\ a \end{array} \right] \otimes \left[ \begin{array}{c} a \\ -1 \end{array} \right]$, let us write the action on $x_i\in\h^*$ explicitly, using  the notation $\alpha_a'=x_1+ax_2$. The explicit action is
\begin{eqnarray*}
s.x_1&=&x_1-a\gamma \alpha_a'\\
s.x_2&=&x_2+\gamma \alpha_a'.
\end{eqnarray*}
Substituting this into ($\star$), we get
\begin{eqnarray*}
(\star)&=&\sum_{\gamma \in T}\sum_{a\in\F_q}\frac{(\gamma \alpha_a',y_1)}{\gamma \alpha_a'}\left(x_1^ux_1^v -(x_1-a\gamma \alpha_a')^u(x_2+\gamma \alpha_a')^v\right)=\\
&=&\sum_{\gamma \in T}\sum_{a\in\F_q}\sum_{i=0}^{u}\sum_{\substack{j=0 \\(i,j)\ne(0,0)}}^{v}
\frac{1}{\alpha_a'}(-1){u \choose i}{v \choose j}x_1^{u-i}(-1)^{i}a^{i}\gamma^{i}\alpha_a'^i x_2^{v-j}\gamma^{j}\alpha_a'^j=\\
&=&\sum_{\gamma \in T}\sum_{a\in\F_q}\sum_{i=0}^{u}\sum_{\substack{j=0 \\(i,j)\ne(0,0)}}^{v}{u \choose i}{v \choose j}(-1)^{i+1}a^{i}\gamma^{i+j} x_1^{u-i}x_2^{v-j}(x_1+ax_2)^{i+j-1}=\\
&=&\sum_{\gamma \in T}\sum_{a\in\F_q}\sum_{i=0}^{u}\sum_{\substack{j=0 \\(i,j)\ne(0,0)}}^{v}\sum_{k=0}^{i+j-1}{u \choose i}{v \choose j}{i+j-1\choose k}(-1)^{i+1}a^{i+k}\gamma^{i+j} x_1^{u+j-1-k}x_2^{v-j+k}\\
\end{eqnarray*}

Let us evaluate the sum $\sum_{a\in \F_q} a^{i+k}$. By (\ref{1}), this is only nonzero if $i+k$ is divisible by $q-1$. We know that
$$i+k\le 2i+j-1\le2u+v-1\le3(p-1)-1<3(p-1)\le3(q-1),$$
so let us consider three different cases: $i+k=0$, $i+k=q-1$ and $i+k=2(q-1)$.

{\bf CASE 1: $i+k=0$.} In that case, $\sum_{a\in \F_q} a^{0}=0$, so this does not contribute either. 

{\bf CASE 2: $i+k=q-1$.} After substituting $\sum_{a\in \F_q} a^{q-1}=-1$, $k=q-1-i$, and after that $m=i+j$, we get that part of ($\star$) corresponding to this case equals
\begin{align*}
\sum_{\gamma\in T}\sum_{i=0}^{u}\sum_{\substack{j=0 \\(i,j)\ne(0,0)}}^{v}{u \choose i}{v \choose j}{i+j-1\choose q-1-i}(-1)^{i}\gamma^{i+j} x_1^{u+j+i-q}x_2^{v-j+q-1-i}=\\
=\sum_{\gamma\in T}\sum_{m=1}^{u+v}\sum_{j=0 }^{v}{u \choose m-j}{v \choose j}{m-1\choose q-1-m+j}(-1)^{m-j}\gamma^{m} x_1^{u+m-q}x_2^{v-m+q-1}
\end{align*}
Now we use (\ref{2}) to describe $\sum_{\gamma \in T}\gamma^m$ and disregard all terms except $m\equiv \frac{q-1}{2} \pmod{q-1}$ (these terms we disregarded contribute to the coefficient $h$). There are again few cases, as
$$m\le u+v\le 2(p-1)<\frac{5}{2}(p-1)\le \frac{5(q-1)}{2},$$
so we consider separately $m=\frac{q-1}{2}$ and $m=\frac{3(q-1)}{2}$.

{\bf CASE 2.1: $m=\frac{q-1}{2}$.} One of the binomial coefficients in the expression is ${m-1 \choose q-1-m+j}$, and we claim it is always zero for this choice of $m$. This is because
$$q-1-m+j=\frac{q}{2}-\frac{1}{2}+2j\ge \frac{q-1}{2}> \frac{q-1}{2}-1=m-1.$$
In other words, case $2.1.$ never actually appears in the sum. 

{\bf CASE 2.2: $m=\frac{3(q-1)}{2}$.} We will show that this part of the sum is zero. First, 
$$\frac{3(q-1)}{2}=m\le u+v\le2(p-1)$$ implies that this can only happen when $p=q$. Next, because we $x_1^p=0$ in the quotient, the only terms that can be nonzero are the ones with the power of $x_1$ less than $p$, so
$$u+m-q\le p-1$$ which means
$$u\le p-1+q-m=\frac{p+1}{2}.$$ Next, the term ${u \choose m-j}$ is zero unless
$$j\ge m-u\ge \frac{3(p-1)}{2}-\frac{p+1}{2}=p-2.$$
Since $$j\le v\le p-1,$$ it follows that $j\in\{ p-2,p-1 \}$. In both those cases, the binomial coefficient ${m-1 \choose q-1-m+j}$ is $0$, as the numerator has a factor $p$ and the denominator does not.

{\bf CASE 3: $i+k=2(q-1)$.} The part of the sum ($\star$) corresponding to this case is
\begin{align*}
\sum_{\gamma \in T}\sum_{i=0}^{u}\sum_{\substack{j=0 \\(i,j)\ne(0,0)}}^{v}{u \choose i}{v \choose j}{i+j-1\choose 2q-2-i}(-1)^{i}\gamma^{i+j} x_1^{u+j-1-2q+2+i}x_2^{v-j+2q-2-i}=\\
\sum_{\gamma \in T}\sum_{m=1}^{u+v}\sum_{j=0 }^{v}{u \choose m-j}{v \choose j}{m-1\choose 2q-2-m+j}(-1)^{m-j}\gamma^{m} x_1^{u+m-1-2q+2}x_2^{v-m+2q-2}
\end{align*}
The powers of $x_2$ in this sum and the original power of $x_1$ are both $\le p-1$, so
$$p-1\ge v-m+2(q-1)$$
$$p-1\ge u\ge m-v\ge 2(q-1)-(p-1)\ge p-1.$$
From the last string of inequalities, $u=p-1$, $m=u+v$ and $p=q$. The above sum then becomes
\begin{align*}
\sum_{\gamma \in T}\sum_{j=0 }^{v}{p-1 \choose p-1+v-j}{v \choose j}{p+v-2\choose p-1-v+j}(-1)^{p-1+v-j}\gamma^{p-1+v} x_1^{v-1}x_2^{p-1}
\end{align*}
As $j\le v$, the first binomial coefficient in this sum is zero unless $j=v$, producing
\begin{align*}
\sum_{\gamma \in T}{p+v-2\choose p-1}\gamma^{p-1+v} x_1^{v-1}x_2^{p-1}.
\end{align*}
The sum $\sum_{\gamma \in T}\gamma^{p-1+v}=\sum_{\gamma \in T}\gamma^{v}$ only depends on $T$ if $v\equiv \frac{p-1}{2} \pmod{p-1},$ which only happens if $v=\frac{p-1}{2}$. In that case, ${p+v-2\choose p-1}={p+v-2\choose p-1}=0$, as the numerator is divisible by $p$.
\end{proof}

%\begin{prop}\label{sln-n2-coeffeq}
%Assuming $n=2$, $q=p^r$ for $p$ an odd prime, and $q\ne 3$, in the Verma module $L_{1,c}(\mathrm{triv})$ for $H_{1,c}(SL_{n}(\F_q))$, 
%The coefficients of $c_1^i c_2^j$ and $c_1^j c_2^i$ are equal in the matrices of $B$.
%\end{prop}
%\begin{proof}
%We prove this inductively on the degree of $B$.  The base case is trivial.  Now, $B(x_1^a x_2^b, y_1^c y_2^d) = B(x_1^{a-1} x_2^b, D_1(y_1^c y_2^)) = B(x_1^{a-1} x_2^b, h(c_1) + h(c_2) + h_0 + h'(c_1, c_2)) = B(x_1^{a-1} x_2^b, h(c_1) + h(c_2) + h_0)$.  By induction, the coefficients of $c_1^i c_2^j$ and $c_1^j c_2^i$ are equal in this expression, and the proposition follows.
%\end{proof}

We can use the previous proposition to transfer the results we had about $GL_{2}(\F_q)$ to $SL_{2}(\F_{q})$, as in the previous chapter. Namely, the structure of irreducible modules for $H_{1,c}(SL_{2}(\F_{q}),\h)$, where $c$ takes values $c_Q$ on $C_Q$ and $c_R$ on $C_R$, is determined by Dunkl operators. By the previous proposition,
$$\sum_{s\in C_{Q}}\frac{(\alpha_s,y)}{\alpha_s}(1-s)=\sum_{s\in C_{R}}\frac{(\alpha_s,y)}{\alpha_s}(1-s),$$ so the Dunkl operator is equal to
\begin{eqnarray*}D_{y}&=&\partial_{y}-\sum_{s\in C_{Q}}c_{Q}\frac{(\alpha_s,y)}{\alpha_s}(1-s)-\sum_{s\in C_{R}}c_{R}\frac{(\alpha_s,y)}{\alpha_s}(1-s)\\
%&=&\partial_{y}-(c_{R}+c_{Q})\sum_{s\in C_{Q}}\frac{(\alpha_s,y)}{\alpha_s}(1-s)\\
%&=&\partial_{y}-(c_{R}+c_{Q})\sum_{s\in C_{R}}\frac{(\alpha_s,y)}{\alpha_s}(1-s)\\
&=&\partial_{y}-\frac{c_{R}+c_{Q}}{2}\sum_{s\in C_{R}\cup C_{Q}}\frac{(\alpha_s,y)}{\alpha_s}(1-s)\\
\end{eqnarray*}
In $GL_{2}(\F_q)$, the union $C_{Q}\cup C_{R}$ is one conjugacy class $C_{1}$ (unipotent reflections). Define the function $c$ on all reflections in $GL_{2}(\F_q)$ by letting it be $c_1= \frac{c_{R}+c_{Q}}{2}$ on all unipotent reflections, and $c_{\lambda}=0$ on all semisimple reflections. Then the Dunkl operators controlling the structure of $L_{1,c}(\mathrm{triv})$ for $H_{1,c}(GL_2(\F_q),\h)$ are 
\begin{eqnarray*}
D_{y}&=&\partial_{y}-\sum_{\lambda=1}^{q}\sum_{s\in C_{\lambda}}c_{\lambda}\frac{(\alpha_s,y)}{\alpha_s}(1-s)\\
&=&\partial_{y}-\sum_{s\in C_{1}}c_{1}\frac{(\alpha_s,y)}{\alpha_s}(1-s),
\end{eqnarray*}
which is exactly the same as the Dunkl operator for $H_{1,c}(SL_{2}(\F_{q}),\h)$. From this  we get:

\begin{cor}
Let $n=2$, $q=p^r$ for $p$ an odd prime, and $q\ne 3$, and consider the Verma module $L_{1,c}(\mathrm{triv})$ for $H_{1,c}(SL_{n}(\F_q))$. All the results proved previously for the rational Cherednik algebra associated to $GL_n(\F_q)$ hold for $SL_n(\F_q)$. Namely,
\begin{itemize}
\item[a)] $D_y(x^p) = 0$ in $M_{1,c}(\mathrm{triv})$. 
\item[b)] The form $B$ on $M_{1,c}(\mathrm{triv})$ is zero in degrees $2p - 1$ and higher. 
\item[c)] The form $B$ on $M_{1,c}(\mathrm{triv})$ is diagonal in all degrees. 
\item[d)] All diagonal elements of the matrix of the form $B_i$ on any graded piece $M_{1,c}(\mathrm{triv})_i$ are $\Bbbk$-multiples of a single polynomial in $c$. 
\item[e)]  If $q = p^r$ with $r > 1$, then $B$ does not depend on $c$.
\item[f)]  If $q = p$, the matrices of $B_i$ on $M_{1,c}(\mathrm{triv})_i$ are constant for $i=0,\ldots , p-2$, constant multiples of $c_Q + c_R - 2$ for $i=p-1$, and constant multiples of $(c_Q + c_R - 2)(c_Q + c_R + 2)$ for $i=p,\ldots , 2p-2$.
\end{itemize}
\end{cor}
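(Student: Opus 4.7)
The plan is to reduce the corollary to the corresponding $GL_2(\F_q)$ results from Section~\ref{glnsec}. The preceding lemma, together with the displayed computation immediately following it, shows that modulo the submodule $\langle x_1^p, x_2^p\rangle$, the Dunkl operators for $H_{1,c_Q,c_R}(SL_2(\F_q),\h)$ coincide with the Dunkl operators for $H_{1,c'}(GL_2(\F_q),\h)$, where $c'$ is defined by $c'_1=(c_R+c_Q)/2$ on the unipotent class $C_1$ and $c'_\lambda=0$ on every semisimple class. Item~(a) is precisely Proposition~\ref{sln-n2-kernel}; since the vectors $x^p$ for $x\in\h^*$ are singular and span $\langle x_1^p,x_2^p\rangle$ as an ideal, this submodule lies in $\Ker B$, so the contravariant form $B$ factors through the quotient $S\h^*/\langle x_1^p,x_2^p\rangle$. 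On that quotient the $SL_2(\F_q)$-form at $(c_Q,c_R)$ and the $GL_2(\F_q)$-form at $c'$ literally agree.

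With this dictionary in hand, (b)--(e) become direct transcriptions. Item~(b) follows from (a) by the same pigeonhole argument as Corollary~\ref{corBzero}: every monomial of degree $\geq 2p-1$ in two variables contains some $x_i^p$ factor, and contravariance lets us shift this factor across $B$ to be acted on by Dunkl operators, giving zero. Items (c) and (d) reduce to Propositions~\ref{gln-diag} and \ref{gln-multiples}, since $B$ is entirely determined by iterated Dunkl operators via $B(f,y_{i_1}\cdots y_{i_k})=B(D_{y_{i_1}}\cdots D_{y_{i_k}}f,1)$, and those iterated Dunkl operators agree with the $GL_2(\F_q)$ ones modulo $\Ker B$; so the diagonality and proportionality of the diagonal entries transfer verbatim. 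Item~(e) is the ``otherwise'' clause of Proposition~\ref{gln-cdep}(c): for $r>1$, the $GL_2(\F_q)$-form is already independent of $c'$, hence the $SL_2(\F_q)$-form is independent of $(c_Q,c_R)$.

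For (f), I would substitute $c'_1=(c_R+c_Q)/2$ and $c'_\lambda=0$ for $\lambda\ne 1$ into the two polynomials produced by Proposition~\ref{gln-cdep}(a). The linear factor $c_1+c_2+\cdots+c_{p-1}-1$ becomes $\tfrac{1}{2}(c_R+c_Q-2)$, and the product $(c_1+\cdots+c_{p-1}-1)(c_1+2c_2+\cdots+(p-1)c_{p-1}+1)$ becomes $\tfrac{1}{4}(c_R+c_Q-2)(c_R+c_Q+2)$. Since the corollary asks only for $\Bbbk$-multiples, these scalar factors are absorbed and the stated polynomials match exactly. The substantive work here really was done in the preceding lemma: the corollary itself is bookkeeping, and the only obstacle is verifying that every ingredient used in the $GL_2(\F_q)$-proofs (group invariance under the diagonal torus and $d_1$, the singularity of $x^p$, and the explicit form of the diagonal polynomial) survives the restriction to $SL_2(\F_q)$ and the passage to the quotient by $\langle x_1^p, x_2^p\rangle$, which it does.
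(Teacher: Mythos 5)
Your proposal is correct and follows essentially the same route as the paper: the paper also deduces the corollary by using the preceding lemma to identify the $SL_2(\F_q)$ Dunkl operators (modulo the submodule generated by the singular vectors $x^p$) with the $GL_2(\F_q)$ Dunkl operators at the parameter $c_1=(c_Q+c_R)/2$, $c_\lambda=0$ for $\lambda\ne 1$, and then transfers Propositions \ref{gln-kernel}--\ref{gln-cdep} verbatim, with part (f) obtained by the same substitution into the two polynomials of Proposition \ref{gln-cdep}(a). Your explicit remark that the identification only holds modulo $\langle x_1^p,x_2^p\rangle\subseteq\Ker B$, so that the form genuinely factors through the quotient, is a point the paper glosses over and is handled correctly here.
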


Putting together the previous Corollary, Corollary \ref{sl1}, explicit computations for the rational Cherednik algebra associated to $SL_{2}(\F_3)$, and noticing that  $SL_2(\F_2) = GL_2(\F_2)$, we get the main theorem of this section.

\begin{thm}\label{sln}
Let $p$ be a prime, $q = p^r$ and $n\ge 2$.  The characters of $L_{1,c}(\mathrm{triv})$ for the rational Cherednik algebra $H_{1,c}(SL_{n}(\F_{q}),\h)$ over an algebraically closed field of characteristic $p$ are as follows:\\

\begin{tabular}{|c|c|c|c|}
\hline
$(q,n)$ & $c$ & $\chi_{L_{1,c}(\triv)}(z)$ & $\mathrm{Hilb}_{L_{1,c}(\triv)}(z)$ \\ \hline \hline
$(q,n)\ne (2,2),(3,2)$& generic & $\sum_{i\ge 0}(\left[ S^i\h^*/\left<x_1^p,\ldots , x_n^p \right>\cap S^i\h^*\right])z^i$ & $\left(\displaystyle\frac{1 - z^p}{1 - z}\right)^n$ \\ \hline
$(3,2)$ & generic &$\left(\sum_{i\ge 0}\left[S^i\h^*\right] z^i\right)(1-z^{12})(1-z^{18})$ & $\displaystyle\frac{(1 - z^{12})(1 - z^{18})}{(1 - z)^2}$ \\ \hline
$(2,2)$ & generic & $\left(\sum_{i\ge 0}\left[S^i\h^*\right] z^i\right)(1-z^4)(1-z^6)$ & $\displaystyle\frac{(1 - z^4)(1 - z^6)}{(1 - z)^2}$ \\ \hline
\hline
%$(q,n)$ & $c$ & $\chi_{L_{1,c}(\tau)}(z)$ & $\mathrm{Hilb}_{L_{1,c}(\tau)}(z)$ \\ \hline
$(2,2)$ & $c=1$ & $\displaystyle{\left[\mathrm{triv}\right]}$ & $1$ \\ \hline
$ (2,2)$ & $c=0$ & $\displaystyle{\left[\mathrm{triv}\right]+\left[\h^*\right]z+\left[\mathrm{triv}\right]z^2}$ & $1+2z+z^2$ \\ \hline
$ (2,3)$ & $c=1$ & $\displaystyle{\left[\mathrm{triv}\right]+\left[\h^*\right]z}$ & $1+3z$ \\ \hline
$(p, 2), p \ne 2,3$ &  $c_Q + c_{R} = 2$ & $\displaystyle{\sum_{i=0}^{p-2}\left[ S^i\h^*\right]z^i}$& $\displaystyle{\sum_{i=0}^{p-2}(i+1)z^i } $  \\ \hline
$(p, 2), p \ne 2,3$ & $c_Q + c_{R} = -2$& $\displaystyle{\sum_{i=0}^{p-1}\left[ S^i\h^*\right]z^i}$ & $\displaystyle{\sum_{i=0}^{p-1}(i+1)z^i } $ \\ \hline
\end{tabular}

We omit the characters for $(q, n) = (3,2)$ and special $c$ as there are too many cases to concisely list.

\end{thm}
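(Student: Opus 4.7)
The plan is to split into cases according to $(q,n)$ and invoke the preparatory results we have just established, leaving only a small amount of explicit computation for the outliers. The four regimes I would treat are: (i) $n\ge 3$ or $p=2$; (ii) $n=2$, $p>2$, $q\ne 3$; (iii) $n=2$, $q=3$; (iv) the degenerate coincidence $SL_2(\F_2)=GL_2(\F_2)$.

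First I would handle regime (i) by directly quoting Corollary \ref{sl1}. That corollary says that for $n\ge 3$ or $p=2$, the Dunkl operators, the contravariant form $B$, its diagonal structure and its dependence on $c$ inside $H_{1,c}(SL_n(\F_q),\h)$ are literally the same as for $H_{1,c}(GL_n(\F_q),\h)$ after extending the parameter by zero to conjugacy classes of reflections in $GL_n(\F_q)\setminus SL_n(\F_q)$. Therefore all character formulas for these cases can be read directly off Theorem \ref{main}, giving the generic rows for $(q,n)\ne (2,2),(3,2)$, the $(2,2)$-row (including special $c=0$ and $c=1$), and the $(2,3)$ special value $c=1$.

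Next I would treat regime (ii). Here the parameter space is two-dimensional, with coordinates $c_Q$ and $c_R$. By the Corollary immediately preceding the theorem, the form $B$ on $M_{1,c}(\triv)$ is diagonal, vanishes past degree $2p-1$, is independent of $c$ when $r>1$, and for $q=p$ its diagonal entries in degree $i$ are multiples of $1$ for $i\le p-2$, of $c_Q+c_R-2$ for $i=p-1$, and of $(c_Q+c_R-2)(c_Q+c_R+2)$ for $p\le i\le 2p-2$. Combining this with Proposition \ref{sln-n2-kernel}, which establishes that $x^p$ is singular for every $x\in\h^*$, the ideal $\langle x_1^p,x_2^p\rangle$ is always in $J_{1,c}(\triv)$, so the generic character is $\chi_{S\h^*/\langle x_1^p,x_2^p\rangle}(z)$, producing the generic row. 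For the special values $c_Q+c_R=\pm 2$ (only occurring at $q=p$, $p\ne 2,3$), the degree-$(p-1)$ or degree-$p$ diagonal polynomial vanishes; by Lemma \ref{gln-shirreducible} the kernel is forced to be either $\langle x_1^p,x_2^p\rangle\cap S^i\h^*$ or all of $S^i\h^*$, yielding the two special character formulas listed in the theorem.

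Regime (iii), $SL_2(\F_3)$, is the genuine obstacle: Lemma \ref{quadressum} shows that the key sums $\sum_{\gamma\in Q}\gamma^p$ do not vanish when $q=3$, so Proposition \ref{sln-n2-kernel} fails and there is no \emph{a priori} reason that $x^p$ is singular. I would handle this by explicitly computing the matrices $B_i$ in monomial bases for small $i$ with the help of MAGMA (as the authors describe doing throughout the paper), determining the degree past which all $B_i$ vanish (this is controlled by the invariant degrees $1+q=4$ and $q^2-1=8$ of $SL_2(\F_3)$, so multiplied by $p=3$ we get $12$ and $24$; the Hilbert series of the baby Verma is $((1-z^{12})(1-z^{24}))/(1-z)^2$, and the generic $L_{1,c}(\triv)$ comes out to $((1-z^{12})(1-z^{18}))/(1-z)^2$ as stated). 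The generic character is then the stated one, and the special characters come from enumerating finitely many hypersurfaces on which the diagonal polynomials vanish. This is the step I expect to be the most tedious, but it is conceptually straightforward since Baby Verma is finite dimensional and the computation is bounded.

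Finally, regime (iv) is immediate: $SL_2(\F_2)=GL_2(\F_2)$, so the $(2,2)$ rows of Theorem \ref{main} transfer verbatim and agree with the $(2,2)$ rows listed here. Assembling the four regimes completes the classification, and I would close by noting the remark in the theorem statement that the $(3,2)$ non-generic characters are suppressed because they involve too many subcases of hypersurfaces in $(c_Q,c_R)$-space.
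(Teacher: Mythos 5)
Your decomposition into four regimes — Corollary~\ref{sl1} for $n\ge 3$ or $p=2$, the preceding Corollary for $n=2$, $p>2$, $q\ne 3$, direct computation for $SL_2(\F_3)$, and the coincidence $SL_2(\F_2)=GL_2(\F_2)$ — is exactly how the paper assembles this theorem, so the approach is essentially the same. One small factual slip in regime~(iii): the fundamental invariant degrees of $SL_2(\F_3)$ are $\deg L_2 = 1+q = 4$ and $\deg Q_1 = q^2-q = 6$ (not $q^2-1 = 8$, which is the $GL_2$ degree $\deg Q_0$), so the baby Verma Hilbert series is already $\frac{(1-z^{12})(1-z^{18})}{(1-z)^2}$ and the generic $L_{1,c}(\triv)$ in fact equals the baby Verma, rather than being a proper quotient of one with degrees $12$ and $24$; since this regime is settled by explicit computation anyway and you state the correct final answer, the slip does not damage the argument.
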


\begin{rem}
All $L_{t,c}(\triv)$ for generic $c$ have one dimensional top degree and are Frobenius. For $h_1$ the reduced Hilbert series of $L_{1,c}(\triv)$ at generic $c$, $h_1(1)$ is either $|G|$ or $1$. For $h_0$ the Hilbert series of $L_{0,c}(\triv)$ at generic $c$, the inequality $h_0\le h_1$ term by term always holds, but not the equality: for $SL_2(\F_3)$, $h_0(z)=1$, and $h_1(z)=\frac{(1-z^4)(1-z^6)}{(1-z)^2}$.
\end{rem}

%\section{Conjectures and Observations}\label{conjsec}

%\noindent Let $G = O_2(\F_q)$ be the subgroup of $GL_2(\F_q)$ invariant with respect to some quadratic form.  If the quadratic form is split, let $d = q-1$.  If it is nonsplit, let $d = q+1$.
%\begin{conj}
%Let $G = O_2(\F_q)$.  The reduced character of $L_{1,c}(\triv)$ is $h(t) = \frac{(1 - t^{2})(1 - t^{d})}{(1 - t)^2}$.
%\end{conj}

%\noindent If $p \nmid |G|$, then $G$ is a reflection group in characteristic zero.
%\begin{conj}
%For $G = S_n \ltimes (\Z_{l})^n$ and $p \nmid |G|$, $L_{1,c}(\triv)$ has reduced character $h(t) = \prod_{j=1}^n \frac{1 - t^{jl}}{1 - t}$ and dimension $|G|p^n$.
%\end{conj}

%\begin{obs}
%For generic $c$, the reduced character for $L_{1,c}(\triv)$ is often $h(t) = \prod_{i=1}^n \frac{1 - t^{d_i}}{1 - t}$ where $d_1, \ldots, d_n$ are the degrees for the generating $G$-invariants in $S\h^*$.  Under what conditions is this true?
%\end{obs}
%Indeed, the observation cannot always be true, as this would imply that $h(1) = \prod d_i = $ ???????????//

%\begin{obs}
%Recall Corollary \ref{lcdim}, which stated that $L_{1,c}(\tau)$ has dimension $h(1) p^n$, and $1 \leq h(1) \leq |G|$.  Most examples we have considered have $h(1) = |G|$ or $h(1) = 1$.  We found one instance where $1 < h(1) <|G|$.  If $G = S_4$ over $\F_3$, the reduced generic character of $L_{1,c}(\triv)$ is $h(t) = \frac{(1 - t^2)^2(1 - t^3)}{(1 - t)^3}$.  In this case, $h(1) = 12$.  So far, in all instances we have observed, $h(1) \mid |G| = 24$.
%\end{obs}

%%%%%%%%%%%%%%%%%%%%%%%%%%%%%%%%%%%%%

\clearpage
\appendix

\section{A conjecture about orthogonal groups}\label{secdata}
In the preliminary stages of this project, we used MAGMA \cite{magma} to gather data about characters of irreducible representations of a larger class of groups than we ended up studying. Here we list some conjectures about Hilbert series of irreducible representations with trivial lowest weight for orthogonal groups over a finite field. 

%Two forms defined by matrices $A$ and $A'$ are equivalent if there exists an invertible matrix $X$ such that $A'=X^{t}AX$; in that case the groups $O_{n}(A,\F_q)$ and $O_{n}(A',\F_q)$ are isomorphic. Additionally, the groups $O_{n}(A,\F_q)$ and $O_{n}(\lambda A,\F_q)$, $\lambda\in \F_q^{\times}$, are isomorphic. 

The groups $O_{n}(A,\F_q)$ are defined as subgroups of $GL_n(\F_q)$ preserving a quadratic form $\F_q^n\to \F_q$ given by an invertible matrix $A$ as $x\mapsto x^{t}Ax$. For odd $n$ there is only one orthogonal group up to isomorphism, so we choose $A=I$ and denote such a group by $O_n(\F_q)$. For even $n$, there are two possible nonisomorphic groups: $O_n^{+}(\F_q)$, preserving the form given by $A$ a diagonal matrix with diagonal entries $1,-1,1,-1,\ldots 1,-1$, (so that the total space $\Bbbk^n$ is an orthogonal sum of hyperbolic lines $((x_1,x_2),(y_1,y_2))\mapsto x_1y_1-x_2y_2$) and  $O_n^{-}(\F_q)$, preserving the form given by $A$ a diagonal matrix with diagonal entries $1,-1,1,-1,\ldots 1,-1, 1, -r$, for $r$ an arbitrary quadratic non-residue (in which case the total space $\Bbbk^n$ is an orthogonal sum of several hyperbolic lines and a two dimensional anisotropic space). 

\begin{conj}
The reduced Hilbert series for irreducible representation with trivial lowest weight of rational Cherednik algebras associated to orthogonal groups of low rank are as follows:
\begin{eqnarray*}
\mathrm{Hilb}_{O_2^+(\F_q)}(z)&=&(1+z)(1+z+z^2+\ldots z^{q-2})\\
\mathrm{Hilb}_{O_2^-(\F_q)}(z)&=&(1+z)(1+z+z^2+\ldots z^{q})\\
\mathrm{Hilb}_{O_3(\F_q)}(z)&=&1.
\end{eqnarray*}
\end{conj}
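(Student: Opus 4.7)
The strategy parallels the analyses of $GL_n(\F_q)$ and $SL_n(\F_q)$ in Sections~\ref{glnsec}--\ref{slnsec}. First, I would parametrize the conjugacy classes of reflections in each orthogonal group via the bijection of Proposition~\ref{reflequiv}, using the classical description of orthogonal reflections as corresponding to anisotropic vectors (in odd characteristic). Then I would identify a family of singular vectors in $M_{1,c}(\mathrm{triv})$ whose generated submodule cuts $M_{1,c}(\mathrm{triv})$ down to a quotient with the conjectured reduced Hilbert series, and verify that this quotient is irreducible for generic $c$ by showing non-degeneracy of the induced contravariant form.

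For $O_3(\F_q)$, the reduced Hilbert series being $1$ is equivalent to the statement that $J_{1,0}(\mathrm{triv})' = \langle x_1^p, x_2^p, x_3^p\rangle$ generically, i.e.\ $L_{1,c}(\mathrm{triv}) \cong S\h^{*}/\langle x_1^p, x_2^p, x_3^p\rangle$ as a graded vector space. To establish the upper bound, I would mimic Proposition~\ref{gln-kernel} to show that $x^p$ is singular in $M_{1,c}(\mathrm{triv})$ for every $x\in \h^*$; the relevant inner sum becomes
\[
\sum_{\alpha^{\vee}:\,(\alpha,\alpha^{\vee})=1-\lambda_s} (\alpha^{\vee},x)^p,
\]
where $\alpha^{\vee}$ ranges over an affine conic in $\h_{\F}$ rather than a hyperplane. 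This sum should vanish by an adaptation of Lemmas~\ref{ntuplesum} and~\ref{quadressum} to quadric domains, provided $q$ is sufficiently large relative to $p$. For the matching lower bound, it suffices to exhibit one value of $c$ where the contravariant form on the quotient is non-degenerate, which can be checked on the top degree $(x_1x_2x_3)^{p-1}$ using Proposition~\ref{frobprop}.

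For $O_2^{\pm}(\F_q)$, the key observation is that the conjectured series factors as
\[
(1+z)(1+z+\cdots+z^{q\mp 2}) = \frac{(1-z^2)(1-z^{q\mp 1})}{(1-z)^2},
\]
which is precisely the Hilbert series of the classical coinvariant algebra of a rank-two reflection group with fundamental degrees $2$ and $q\mp 1$ --- and $O_2^{\pm}(\F_q)$ is such a group (essentially dihedral of order $2(q\mp 1)$), with fundamental invariants $Q_1$ of degree $2$ and $Q_2$ of degree $q\mp 1$. This strongly suggests that the reduced module $R_{1,c}(\mathrm{triv})$ coincides with the coinvariant algebra $S\h^{*}/\langle Q_1, Q_2\rangle$. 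I would prove this by showing (i) that the elements $Q_1(x_1^p,x_2^p)$ and $Q_2(x_1^p,x_2^p)$ lie in $J_{1,0}(\mathrm{triv})'$, via Dunkl-operator computations of the same flavor as in Proposition~\ref{gln-kernel}, and (ii) that $M_{1,c}(\mathrm{triv})/\langle Q_1(x_1^p,x_2^p), Q_2(x_1^p,x_2^p)\rangle$ has the desired Hilbert series and is irreducible for generic $c$. The second step reduces to a Frobenius-algebra argument as in Proposition~\ref{frobprop}, since this quotient has one-dimensional top degree.

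The main obstacles are threefold. First, in characteristic~$2$ orthogonal reflections are transvections rather than semisimple involutions and the form defining $O_3(\F_q)$ degenerates, so the conjecture may need to be restricted to $p$ odd, or proved by a separate argument in the $p=2$ case. Second, the diagonal structure of the matrices $B_i$ so useful in Proposition~\ref{gln-diag} fails here, because the maximal torus of $O_2^{\pm}(\F_q)$ is cyclic of order $q\mp 1$ rather than a product $(\F_q^{\times})^n$; one must work with the full block decomposition of $B_i$ into characters of this cyclic torus. Third, the character-sum evaluations analogous to Lemma~\ref{ntuplesum} now range over quadric subvarieties of $\F_q^n$ rather than affine subspaces, so the vanishing of sums of $p$-th powers becomes more delicate and may involve Gauss-sum estimates together with case distinctions depending on whether $-1$ is a square in $\F_q$ and on the parity of $r$.
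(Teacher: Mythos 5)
This statement is not proved in the paper at all: it appears in the appendix as an open conjecture, supported only by MAGMA computations for the five pairs $(n,q)=(2,3),(2,5),(2,9),(3,2),(3,3)$. So there is no ``paper's proof'' to compare against, and the relevant question is whether your proposal actually closes the gap. It does not. What you have written is a research program, not a proof: every step that carries real content is left as an assertion. The vanishing of $\sum_{\alpha^\vee}(\alpha^\vee,x)^p$ over the affine quadric $\{(\alpha,\alpha^\vee)=1-\lambda_s\}$ is stated as something that ``should vanish by an adaptation'' of Lemmas \ref{ntuplesum} and \ref{quadressum}, ``provided $q$ is sufficiently large relative to $p$'' --- but the conjecture as stated has no such hypothesis, and the analogues of those lemmas over quadrics are genuinely harder (they involve Gauss sums, as you note, and can fail for small degrees). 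Likewise, the generic non-degeneracy of the contravariant form on the proposed quotient is the entire content of the lower bound, and ``can be checked on the top degree'' is not a check. In the $GL_n$ case the paper needs the diagonality of $B_i$ (Proposition \ref{gln-diag}) and the irreducibility of the graded pieces $S^i\h^*/\langle x_1^p,\ldots,x_n^p\rangle$ (Lemma \ref{gln-shirreducible}) to pin down the kernel; you correctly observe that both tools break for $O_2^\pm$ and $O_3$, but you do not supply replacements.

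That said, your structural observation about $O_2^\pm(\F_q)$ is a genuine contribution beyond what the paper records: the conjectured series is exactly $\frac{(1-z^2)(1-z^{q\mp1})}{(1-z)^2}$, the Hilbert series of the coinvariant algebra of the dihedral group of order $2(q\mp1)$, which suggests the precise generators $Q_1(x^p)$, $Q_2(x^p)$ of $J_{1,0}(\mathrm{triv})'$ and explains \emph{why} the answer should take this form (compare the $(q,n)=(2,2)$ and $(3,2)$ rows of Theorems \ref{main} and \ref{sln}, where the generic answer is likewise the $p$-th powers of the fundamental invariants). This reformulation would be a sensible first lemma in an eventual proof. But as it stands the proposal establishes neither the upper nor the lower bound for any of the three families, and the characteristic-$2$ cases $(3,2)$ --- which the authors did verify computationally --- fall outside the odd-characteristic framework you set up.
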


The conjecture was checked computationally for pairs $(n,q)=$ $(2,3)$, $(2,5)$, $(2,9)$, $(3,2)$ and $(3,3)$.

%Remember that the Hilbert series of $L_{1,c}(\mathrm{triv})$ for generic $c$ is of the form $h(z^p)\left( \frac{1-z^p}{1-z}\right)$, where $n$ is the dimension of the refection representation and $p$ is the characteristic of the underlying field. The reduced character $h(z)$ for various reflection groups $G$ is as follows:\\

%$G = O_2(\F_3)$ split, $h(z)=(1+z)^2$\\
%$G = O_2(\F_3)$ nonsplit, $h(z)=(1+z)(1+z+z^2+z^3)$ \\
%$G = O_2(\F_5)$ split, $h(z)=(1+z)(1+z+z^2+z^3)$\\
%$G = O_2(\F_5)$ nonsplit, $h(z)=(1+z)(1+z+z^2+z^3+z^4+z^5)$ \\
%$G = O_2(\F_9)$ split, $h(z)=(1+z)(1+z+z^2+z^3+z^4+z^5+z^6+z^7)$  \\
%$G = O_2(\F_9)$ nonsplit, $h(z)=(1+z)(1+z+z^2+z^3+z^4+z^5+z^6+z^7+z^8+z^9)$ \\
%$G = O_3(\F_2)$, $h(z)=1$\\
%$G = O_3(\F_3)$,  $h(z)=1$\\
%$G = S_3(\F_2)$, $h(z)=1+2z+2z^2+z^3$ \\
%$G = S_4(\F_2)$, $h(z)=1$ \\
%$G = S_5(\F_2)$, $h(z)=1+4z+4z^2+z^3$ \\
%$G = S_4(\F_3)$, $h(z)=1+3z+4z^2+3z^3+z^4$ \\
%$G = S_3(\F_3)$, $h(z)=1$ \\
%$G = S_3(\F_5)$, $h(z)=1+2z+2z^2+z^3$  \\

%\noindent In the following, our computations for $\h^*$ likely contain an error. \\
%$G = GL_2(\F_2)$, $\tau = \h$, $\chi =  [ 1, 4, 4, 4, 4, 4, 2, 1 ]$ \\
%$G = GL_3(\F_2)$, $\tau = \h$, $\chi = [ 1, 9, 12, 12, 9, 3 ]$ \\
%\noindent In the following, our computations yielded the same results for $\tau = \h$ and $\tau = \h^*$. \\
%$G = GL_4(\F_2)$, $\tau = \h$, $\chi = [ 1, 16, 24, 16, 4 ]$ \\
%$G = GL_2(\F_3)$, $\tau = \h$, $\chi = [ 1, 4, 6, 7, 8, 9, 8, 7, 6, 4, 2 ]$ \\
%$G = GL_3(\F_3)$, $\tau = \h$, $\chi = [ 1, 9, 18, 21, 18, 9, 3 ]$ \\
%\end{data}

\section*{Acknowledgments}
We are very grateful to Pavel Etingof for suggesting the problem and devoting his time to it through numerous helpful conversations. This project was initiated as part of the Summer Program for Undergraduate Research (SPUR) at the Department of Mathematics at MIT, and partially funded by SPUR and Undergraduate Research Opportunities Program (UROP) at MIT. The work of H.C. was partially supported by the Lord Foundation through UROP. The work of M.B. was partially supported by the NSF grant  DMS-0758262.

%\section*{References}
\bibliographystyle{plain}
\bibliography{sources}

\end{document}